\theoremstyle{plain}
\newtheorem{theorem}{Theorem}[section]
\newtheorem{lemma}{Lemma}[section]
\newtheorem{proposition}{Proposition}[section]
\theoremstyle{definition}
\theoremstyle{remark}
\newtheorem{remark}{Remark}[section]
\newcommand{\rb}{\bar{\rho}}       
\renewcommand{\xi}{{s}}
\newcommand{\grad}{\nabla}            
\newcommand{\e}{\varepsilon}          
\newcommand{\R}{\mathbb{R}}           
\newcommand{\supp}{\mbox{supp}}
\newcommand{\rhob}{\bar{\rho}}
\newcommand{\etat}{\tilde{\eta}}
\newcommand{\qt}{\tilde{q}}
\newcommand{\cd}{\cdot}
\newcommand{\rhot}{\tilde{\rho}}
\newcommand{\x}{\mathbf{x}}
\newcommand{\vv}{\mathbf{v}}
\renewcommand{\rhot}{\rhob}
\def\be{\begin{equation}}
\def\ee{\end{equation}}
\def\bes{\begin{equation*}}
\def\ees{\end{equation*}}
\def\bc{\begin{cases}}
\def\ec{\end{cases}}
\numberwithin{equation}{section}
\newcommand{\etach}{\check{\eta}}
\newcommand{\qch}{\check{q}}
\author{Gui-Qiang G. Chen}
\address{Gui-Qiang G. Chen, Mathematical Institute, University of Oxford,
Andrew Wiles Building, Radcliffe Observatory Quarter, Woodstock Road,
Oxford OX2 6GG, UK; Academy of Mathematics and Systems Science,
Chinese Academy of Sciences, Beijing 100190, China.}
\email{chengq@maths.ox.ac.uk}
\author{Mikhail Perepelitsa}
\address{Mikhail Perepelitsa, Department of Mathematics, University of Houston,
651 PGH Building, Houston, Texas 77204--3008, USA}
\email{misha@math.uh.edu}
\title[Global Solutions to the Euler Equations with Spherical Symmetry]{Vanishing Viscosity Solutions
of \\ the Compressible Euler Equations with \\ Spherical Symmetry and Large Initial Data}
\begin{document}
\begin{abstract}
We are concerned with spherically symmetric solutions of the Euler equations
for multidimensional compressible fluids,
which are motivated by many important
physical situations.
Various evidences indicate that spherically symmetric solutions of
the compressible Euler equations may blow up near the origin
at certain time under some circumstance.
The central feature
is the strengthening of waves as they move radially inward.
A longstanding open, fundamental question is whether concentration
could form at the origin.
In this paper, we develop a method of vanishing viscosity
and related estimate techniques for viscosity approximate solutions,
and establish the convergence of the approximate solutions to a global
finite-energy entropy solution of the compressible
Euler equations with spherical symmetry and large initial data.
This indicates that concentration does not form in the vanishing
viscosity limit, even though the density may blow up at certain time.
To achieve this,
we first construct global smooth solutions of appropriate
initial-boundary value problems
for the Euler equations with designed viscosity terms,
an approximate pressure function,
and boundary conditions,
and then we establish the strong convergence of the viscosity approximate
solutions to a finite-energy entropy solutions of the Euler equations.
\end{abstract}

\keywords{Viscosity solutions, global solutions, spherical symmetry, Euler equations,
compressible fluids, Multidimensional, method of vanishing viscosity,
finite energy, concentration, strong convergence, approximate solutions,
compactness framework}
\subjclass[2010]{35Q31; 35L65; 76N10; 35D30; 76M45}
\maketitle

\begin{section}{Introduction}

\medskip
We are concerned with the existence theory for spherically symmetric global solutions
of the Euler equations for multidimensional compressible homentropy fluids:
\begin{equation}\label{1.1}
\begin{cases}
\partial_t \rho+\nabla_\x\cdot (\rho \vv)=0,\\[1mm]
(\rho\vv)_t+\nabla_\x\cdot(\rho \vv\otimes \vv)+\nabla_\x p=0,\\
\end{cases}
\end{equation}
where $\rho\geq 0$ is the density, $p$ the pressure,  $\vv\in \mathbb{R}^n$ the
velocity, $t\in\R$, $\x\in\R^n$,
and $\nabla_\x$ is the gradient with respect to $\x\in\R^n$.
The constitutive pressure-density relation for polytropic perfect gases is
$$
p=p(\rho)=\kappa\rho^\gamma,
$$
where $\gamma> 1$ is the adiabatic exponent and, by scaling, the constant $\kappa$ in the
pressure-density relation may be chosen as $\kappa= (\gamma-1)^2/4\gamma$ without loss of generality.

For the spherically symmetric motion,
\begin{equation}\label{1.2a}
\rho(t,\x)=\rho(t,r), \quad
\vv(t,\x)=u(t,r)\frac{\x}{r}, \qquad\,\;\; r=|\x|.
\end{equation}
Then the functions $(\rho, m)=(\rho, \rho u)$ are governed by the following Euler equations with
geometrical terms:
\begin{equation}
\label{eq:Eu}
\left\{ \begin{array}{l}
\partial_t\rho{}+\partial_r m{}+{}\frac{n-1}{r}m{}={}0,\\[2mm]
\partial_t m{}+{}\partial_r (\frac{m^2}{\rho}{}+{}p(\rho)){}+{}\frac{n-1}{r}\frac{m^2}{\rho}{}={}0.
\end{array}
\right.
\end{equation}
The existence theory for spherically symmetric solutions $(\rho, \vv)(t,\x)$
to \eqref{1.1} through form \eqref{1.2a}
is equivalent to the existence theory for global solutions $(\rho, m)(t,r)$
to \eqref{eq:Eu}.
For any problem with a constant velocity $\vv_\infty$ at infinity, {\it i.e.},
$\lim_{|\x|\to \infty} \vv(t, \x)=\vv_\infty$,
we may assume without loss of generality that $\vv_\infty=0$,
or equivalently $\lim_{r\to \infty} u(t, r)=0$, by the Galilean invariance.

The study of spherically symmetric solutions can date back 1950s,
which are motivated by many important
physical problems such as flow in a jet engine inlet manifold and
stellar dynamics including gaseous stars and supernovae formation.
In particular, the similarity solutions of such a problem have been
discussed in
a large literature ({\it cf.} \cite{CFr,Guderley,Ro,Slemrod,Wh}),
which are determined by singular ordinary differential equations.
The central feature is the strengthening of waves as they move radially inward.
Various evidences indicate that spherically symmetric solutions of
the compressible Euler equations may blow up near the origin at certain time under
some circumstance.
A longstanding open, fundamental question is whether concentration
could form at the origin, that is, the density becomes a delta measure at the origin,
especially when a focusing spherical shock is moving inward
the origin ({\it cf.} \cite{CFr,Ro,Wh}).

Some progress has been made for solving this problem
in the recent decades.
The local existence of spherically symmetric weak solutions outside a solid ball
at the origin was discussed in Makino-Takeno \cite{MT} for the case $1 <\gamma\le \frac{5}{3}$;
also see Yang \cite{Yang}.
A shock capturing scheme was introduced in Chen-Glimm \cite{ChenGlimm} for
constructing approximate solutions to spherically symmetric entropy solutions
for $\gamma>1$,
where the convergence proof was limited to be locally in time.
A first global existence of entropy solutions including the origin
was established in Chen \cite{Chen} for a class of $L^\infty$ Cauchy data
of arbitrarily large amplitude,
which model outgoing blast waves and large-time asymptotic solutions.
Also see
Slemrod
\cite{Slemrod} for the resolution of the spherical
piston problem for compressible homentropic gas dynamics
via a self-similar viscous limit
and
LeFloch-Westdickenberg
\cite{LW} for a compactness framework
to ensure the strong compactness of spherically symmetric approximate
solutions with uniform finite-energy norms for the case $1<\gamma\le \frac{5}{3}$.

\smallskip
The approach and ideas developed in this paper
yield indeed the global existence of finite-energy entropy solutions
of the compressible
Euler equations with spherical symmetry and large initial data,
for the general case $\gamma>1$,
based on our earlier results in \cite{CP}.
To establish the existence of global entropy solutions to \eqref{eq:Eu} with initial data:
\begin{equation}\label{ID}
(\rho, m)|_{t=0}=(\rho_0, m_0),
\end{equation}
we develop a method of vanishing viscosity and related estimate techniques for viscosity approximate solutions,
and establish the convergence of the viscosity approximate solutions to a global finite-energy entropy solution.
To achieve this, we first construct global smooth solutions of appropriate initial-boundary value problems
for the Euler equations with designed viscosity terms,
an approximate pressure function,
and boundary conditions,
and then we establish the strong convergence of the viscosity approximate solutions to an entropy solution of the Euler
equations \eqref{eq:Eu}, which is equivalent to \eqref{1.1} via relation \eqref{1.2a}.
For simplicity of presentation, we focus our analysis on the physical region $1<\gamma\le 3$ throughout the paper,
though the convergence argument also works for all $\gamma>1$.

The viscosity terms and approximate pressure function are designed to approximate the Euler equations are as follows:

\begin{equation}
\label{eq:NS}
\left\{ \begin{array}{l}
\rho_t{}+{}m_r{}+{}\frac{n-1}{r}m{}={}\e\big(\rho_{rr}{}+{}\frac{n-1}{r}\rho_r\big)\equiv \e r^{-(n-1)}\big(r^{n-1}\rho_r\big)_r,\\[2mm]
m_t{}+{}(\frac{m^2}{\rho}{}+{}p_\delta(\rho))_r{}+{}\frac{n-1}{r}\frac{m^2}{\rho}
 {}={}\e\big(m_r{}+{}\frac{n-1}{r}m\big)_r\equiv \e \big(r^{-(n-1)}(r^{n-1}m)_r\big)_r,
\end{array}
\right.
\end{equation}
where
$$
p_\delta(\rho){}=\kappa\rho^\gamma +\delta \rho^2, \qquad \delta=\delta(\e)>0,
$$
with $\e\in (0,1]$ and $\delta(\e)\to 0$ as $\e\to 0$ in an appropriate order.
Notice that the positive term $\delta\rho^2$ is added into $p_\delta(\rho)$
to avoid the possibility of formation of cavitation of the solutions to
the viscous system \eqref{eq:NS}.

We consider \eqref{eq:NS}
on cylinder $Q^\e{}={}\R_+\times (a, b)$, with $\R_+=[0, \infty)$,
$a:=a(\e)\in (0, 1), b:=b(\e)>1$,
and
$$
\lim_{\e\to 0} a(\e){}={}0, \qquad \lim_{\e\to 0} b(\e)=\infty,
$$
with the boundary conditions:
\begin{equation}
 \label{eq:bc}
(\rho_r, m)\big|_{r=a}{}=(0,0), \quad (\rho, m)|_{r=b}=(\rhob, 0)
\qquad\;\; \mbox{for}\,\,t>0,
\end{equation}
for some $\rhob:=\rhob(\e)>0$, and with appropriate approximate initial functions:
\begin{equation}
\label{eq:ic}
(\rho, m)|_{t=0}{}={}(\rho_0^\e, m_0^\e)(r)  \qquad\mbox{for}\;\; a<r<b,
\end{equation}
satisfying the conditions in Theorem \ref{main} below.

A pair of mappings $(\eta, q): \R_+ \times\R\to \R^2$ is
called an entropy-entropy flux pair (or entropy pair, for short) of
system \eqref{eq:Eu} if the pair satisfies the $2\times 2$
linear hyperbolic system:
\begin{equation}
\nabla q(U)=\nabla \eta(U)\,
\nabla
\left(\begin{array}{c}
 m\\
 \frac{m^2}{\rho}+p(\rho)
\end{array}
\right),
\end{equation}
where $\nabla=(\partial_\rho, \partial_m)$
is the gradient with respect to $U=(\rho, m)$ from now on.
Furthermore, $\eta(\rho,m)$ is called a weak entropy if
\begin{equation}
\eta\Big|_{\begin{subarray}{l}
                \rho=0\\
                u=m/\rho \,\,\text{fixed}
        \end{subarray}}=0.
\label{7.1.1.2}
\end{equation}
An entropy pair is said to be convex if the Hessian $\nabla^2\eta(\rho,m)$
is nonnegative
in the region under consideration.

For example, the mechanical energy $\eta^*(\rho, m)$ (a sum of the kinetic and
internal energy) and the mechanical energy flux $q^*(\rho, m)$:
\begin{equation}
\eta^*(\rho,m)=\frac{1}{2}\frac{m^2}{\rho}+ \frac{\kappa \rho^\gamma}{\gamma-1}, \quad
q^*(\rho,m)=\frac{1}{2}\frac{m^3}{\rho^2}+ \frac{\kappa\gamma}{\gamma-1}m\rho^{\gamma-1},
\label{mech-energy}
\end{equation}
form a special entropy pair of system \eqref{eq:Eu}; $\eta^*(\rho,m)$ is convex
in the region $\rho\ge 0$.

\medskip
Any weak entropy pair for the Euler system \eqref{eq:Eu} can be
expressed by
\begin{eqnarray}
\label{eta}
\eta_\psi(\rho,m){}&=&{}\rho\int_{-\infty}^\infty \psi(\frac{m}{\rho}+\rho^\theta s)[1-s^2]^{\lambda}_+\,ds, \\
\label{q}
q_\psi(\rho,m){}&=&{}\rho \int_{-\infty}^\infty(\frac{m}{\rho}+\theta \rho^\theta s) \psi(\frac{m}{\rho}+\rho^\theta s)[1-s^2]^{\lambda}_+\,ds,
\end{eqnarray}
with $\lambda{}={} \frac{3-\gamma}{2(\gamma-1)}$ and the generating function $\psi(s)$.

\begin{theorem}
\label{main}
Assume that $(\rho_0,m_0)\in L^1_{loc}(\R_+)^2$, with $\rho_0\geq 0$, is of finite energy:
\begin{equation}\label{ID-a}
\big( \frac{m_0^2}{2\rho_0}{}+{}\frac{\kappa\rho_0^\gamma}{\gamma-1}\big)r^{n-1}\in L^1(\R_+).
\end{equation}
Let $(\delta,\rhob){}={}(\delta(\e),\,\rhob(\e))\in (0,\e)\times (0,1)$
with $\lim_{\e\to 0}(\delta,\, \rhob){}={}(0,\,0)$
satisfy
\begin{equation}\label{epsilon-delta}
\rhob^\gamma b^n+\frac{\delta}{\e}b^n\le M,
\end{equation}
for some $M<\infty$ independent of $\e\in (0,1]$.
If $(\rho_0^\e,m_0^\e)$ is a sequence of smooth functions with the
following properties:
\begin{itemize}
\item[(i)] $\rho_0^\e>0$;

\medskip
\item[(ii)] $(\rho^\e_0,m^\e_0)$ satisfies \eqref{eq:bc} and
\begin{equation}\label{1.6a}
\big(r^{n-1} m^\e_{0}\big)_r\big|_{r=a}{}={}0,
\end{equation}
and, at $r=b$,
\begin{equation}\label{1.6b}
\qquad m_{0,r}^\e {}={} \e r^{-(n-1)}\big(r^{n-1}\rho_{0,r}^\e\big)_r, \quad
\Big(\frac{(m_0^\e)^2}{\rho_0^\e}{}+{}p_\delta(\rho_0^\e)\Big)_r
{}={}\e r^{-(n-1)}\big(r^{n-1}m_{0}^\e\big)_r;
\end{equation}

\smallskip
\item[(iii)] $(\rho_0^\e,m_0^\e){}\to{}(\rho_0,m_0)$ a.e. $r\in \R_+$ as $\e\to 0$,
where we understand $(\rho_0^\e,m_0^\e)$ as the zero extension of $(\rho_0^\e,m_0^\e)$ outside $(a, b)$;

\medskip
\item[(iv)] $\int_a^b \Big(\frac{(m_0^\e)^2}{2\rho^\e_0}{}+{}\frac{\kappa (\rho_0^\e)^\gamma}{\gamma-1}\Big)r^{n-1}dr
\to \int_0^\infty \Big(\frac{m_0^2}{2\rho_0}+\frac{\kappa \rho_0^\gamma}{\gamma-1}\Big)r^{n-1}dr$
as $\e\to 0$,
\end{itemize}

\medskip
\noindent
then, for each fixed $\e>0$, there is a unique global classical solution $(\rho^\e,m^\e)(t,r)$
of \eqref{eq:NS}--\eqref{eq:ic} with initial data $(\rho_0^\e,m_0^\e)$
so that there exists a subsequence (still labeled $(\rho^\e,m^\e)$) that converges
a.e. $(t,r)\in \R_+^2:=\R_+\times \R_+$
and  in $L^p_{loc}(\R_+^2)\times L^{q}_{loc}(\R_+^2)$, $p\in[1,\gamma+1)$, $q\in[1, \frac{3(\gamma+1)}{\gamma+3})$,
as $\e\to 0$, to a global finite-energy entropy solution $(\rho,m)$ of the Euler equations \eqref{eq:Eu} with initial condition
\eqref{eq:ic} in the following sense:
\begin{enumerate}
\item[(i)] For any $\varphi{}\in{}C^\infty_0(\R_+^2)$ with $\varphi_r(t,0){}=0$,
\[
\int_{\R_+^2}\big(\rho\varphi_t{}+{}m\varphi_r\big)\,r^{n-1}drdt{}
+{}\int_0^\infty \rho_0(r)\varphi(0,r)\,r^{n-1}dr{}={}0;
\]
\item[(ii)] For all $\varphi{}\in{}C^\infty_0(\R_+^2)$, with $\varphi(t,0){}={}\varphi_r(t,0){}={}0$,
\[
\int_{\R_+^2} \big(m\varphi_t{}+ \frac{m^2}{\rho}\varphi_r +p(\rho) (\varphi_r+\frac{n-1}{r}\varphi)\big)\,r^{n-1}drdt{}+{}
\int_0^\infty m_0(r)\varphi(0,r)\,r^{n-1}dr{}={}0;
\]
\item[(iii)] For a.e. $t_2\ge t_1\ge 0$,
\begin{equation}
\label{finite_energy}
\int_0^\infty \eta^*(\rho, m)(t_2,r)\,r^{n-1}dr{}
\leq{}\int_0^\infty \eta^*(\rho, m)(t_1,r)\,r^{n-1}dr{}
{}\leq{} \int_0^\infty \eta^*(\rho_0, m_0)(r)\,r^{n-1}dr;
\end{equation}
\item[(iv)]
For any convex function $\psi(s)$ with subquadratic growth at infinity and any entropy pair $(\eta_\psi,q_\psi)$
defined in \eqref{eta}--\eqref{q},
\begin{equation}
\label{entropy_sol}
(\eta_{\psi}r^{n-1})_t{}+{}(q_\psi r^{n-1})_r {}
+{}(n-1)r^{n-2}\big(m\eta_{\psi,\rho}{}+{}\frac{m^2}{\rho}\eta_{\psi,m}{}-{}q_\psi\big){}\leq{}0
\end{equation}
in the sense of distributions.
\end{enumerate}
\end{theorem}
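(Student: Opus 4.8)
The plan is to argue in two stages: first, for each fixed $\e\in(0,1]$, solve the parabolic initial-boundary value problem \eqref{eq:NS}--\eqref{eq:ic} globally in time; then derive estimates uniform in $\e$ and pass to the vanishing viscosity limit through a compensated compactness framework. For Stage 1, note that for fixed $\e$ the system \eqref{eq:NS} is uniformly parabolic in $U=(\rho,m)$ on $[0,T]\times[a,b]$ as long as $\rho$ stays within fixed positive bounds, so linear parabolic theory together with the compatibility conditions \eqref{1.6a}--\eqref{1.6b} (which make the solution classical up to $t=0$) yields a unique local classical solution. To prolong it, one needs positive lower and upper bounds for $\rho^\e$ on each finite time interval: the upper bound follows from a maximum-principle argument for the first equation of \eqref{eq:NS} with the data in \eqref{eq:bc}, and the lower bound --- ruling out cavitation for the approximate problem --- from the added pressure term $\delta\rho^2$ together with the energy estimate below. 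Parabolic regularity then gives all higher derivative bounds, and the solution continues to a unique global classical solution $(\rho^\e,m^\e)$ on $Q^\e$.

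For the $\e$-uniform energy bound, multiplying the first equation of \eqref{eq:NS} by $\partial_\rho\eta^*(\rho^\e,m^\e)$ and the second by $\partial_m\eta^*(\rho^\e,m^\e)$, adding, using the convexity of $\eta^*$, and integrating over $(a,b)$ against $r^{n-1}\,dr$ produces an identity of the form
\[
\frac{d}{dt}\int_a^b \eta^*(\rho^\e,m^\e)\,r^{n-1}\,dr + \e\int_a^b D^\e\,r^{n-1}\,dr = B^\e(t),
\]
where $D^\e\ge 0$ is the viscous dissipation density (up to lower-order terms absorbed by a Gr\"onwall argument) and $B^\e(t)$ collects the boundary contributions. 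Those at $r=a$ vanish by \eqref{eq:bc} and \eqref{1.6a}, while those at $r=b$ are bounded by $C\big(\rhob^\gamma b^n+\tfrac{\delta}{\e}b^n\big)\le CM$ by the Dirichlet data in \eqref{eq:bc} and the admissibility condition \eqref{epsilon-delta}. Combined with hypothesis (iv) on the initial energy, this gives, uniformly in $\e\in(0,1]$,
\[
\sup_{t\ge 0}\int_a^b \eta^*(\rho^\e,m^\e)(t,r)\,r^{n-1}\,dr\le C, \qquad \e\int_0^\infty\!\!\int_a^b D^\e\,r^{n-1}\,drdt\le C.
\]

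Next, the energy bound controls $\rho^\e$ in $L^\infty_t L^\gamma_{loc}$ and $m^\e$ in $L^\infty_t L^{2\gamma/(\gamma+1)}_{loc}$, but the compactness framework requires $\rho^\e\in L^{\gamma+1}_{loc}$ plus a bound preventing mass from concentrating near the origin; this extra local integrability and control near $r=0$ come from localized weighted entropy estimates that exploit the sign structure of the geometric terms $\tfrac{n-1}{r}(\cdots)$, following the estimates developed in \cite{CP}. With these local $L^p$ bounds, for every weak entropy pair $(\eta_\psi,q_\psi)$ with $\psi$ of subquadratic growth the entropy production $(\eta_\psi(\rho^\e,m^\e)r^{n-1})_t+(q_\psi(\rho^\e,m^\e)r^{n-1})_r$ decomposes into a part that is $O(\e)$ in $H^{-1}_{loc}$ (by Cauchy--Schwarz, the dissipation bound, and the local $L^p$ bounds), a part bounded in $L^1_{loc}$ (the geometric source $(n-1)r^{n-2}(m\eta_{\psi,\rho}+\tfrac{m^2}{\rho}\eta_{\psi,m}-q_\psi)$, dominated by the energy density), and a remainder bounded in $W^{-1,p}_{loc}$; Murat's lemma then yields precompactness in $H^{-1}_{loc}(\R_+^2)$. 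The compensated compactness reduction for the isentropic Euler system with geometric source (Chen--Perepelitsa \cite{CP}, building on DiPerna, Chen, Lions--Perthame--Tadmor, Lions--Perthame--Souganidis, and LeFloch--Westdickenberg \cite{LW}) then forces the Young measure generated by $(\rho^\e,m^\e)$ to be, for a.e. $(t,r)$, either a Dirac mass at $(\rho,m)(t,r)$ or supported in the vacuum $\{\rho=0\}$; in either case $(\rho^\e,m^\e)\to(\rho,m)$ a.e. on $\R_+^2$, which the local bounds upgrade to the stated $L^p_{loc}\times L^q_{loc}$ convergence. Passing to the limit in the weak form of \eqref{eq:NS}: the $\e$-viscous terms vanish by the dissipation bound, the boundary term at $r=a(\e)\to0$ vanishes since $\varphi_r(t,0)=0$ (resp. $\varphi(t,0)=\varphi_r(t,0)=0$) and the one at $r=b(\e)\to\infty$ vanishes by compact support, giving (i)--(ii); the energy inequality (iii) follows from the uniform energy bound, weak lower semicontinuity of $\rho\mapsto\int\eta^*r^{n-1}$, the a.e. convergence, and (iv); and the entropy inequality \eqref{entropy_sol} follows by passing to the limit in the viscous entropy inequality for convex $\eta_\psi$, discarding the nonnegative viscous dissipation and sending the $\e$-error terms to $0$ in $\mathcal{D}'(\R_+^2)$.

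The heart of the matter --- and the main obstacle --- is the step above: obtaining the uniform-in-$\e$ local higher-integrability estimate and, simultaneously, controlling the solution near the origin $r=0$, i.e. genuinely ruling out concentration there, where the geometric terms are singular and the density may blow up. The delicate weighted energy/entropy estimates needed at this point are precisely where the particular design of the artificial viscosity in \eqref{eq:NS}, the boundary conditions \eqref{eq:bc}, the relation \eqref{epsilon-delta}, and the framework of \cite{CP} all come into play.
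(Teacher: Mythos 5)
Your outline follows the same overall strategy as the paper (fixed-$\e$ global classical solutions, $\e$-uniform energy and higher-integrability estimates, $H^{-1}_{loc}$ compactness of entropy dissipation, the compensated compactness framework of \cite{CP}, then passage to the limit), but several load-bearing steps are either asserted without proof or described in a way that would not work. At the fixed-$\e$ stage, the claim that the upper bound for $\rho^\e$ ``follows from a maximum-principle argument for the first equation of \eqref{eq:NS}'' fails: the continuity equation contains $\rho u_r$, and no control of $\int_0^T\|u_r\|_{L^\infty}\,dt$ is available at that point. The paper's actual chain is: energy estimate $\Rightarrow$ $\rho\in L^{2\max\{2,\gamma\}}(0,T;L^\infty)$ (Lemma \ref{2.1-b}) $\Rightarrow$ maximum principle for the \emph{Riemann invariants}, giving $\|u\|_{L^\infty}\lesssim 1+\|\rho^\theta\|_{L^\infty}$ (Lemma \ref{lemma:uniform}) $\Rightarrow$ closing the $H^1$ estimate for $(\rho_r,m_r)$ by a Gronwall argument (Lemma \ref{lemma:high-derivative-1}) $\Rightarrow$ $L^\infty$ bound on $\rho$ via the one-dimensional Sobolev-type inequality \eqref{est:rho_uniform:1}, and only then on $u$. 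Likewise, the lower bound on $\rho$ is not a consequence of the $\delta\rho^2$ term; it is obtained from the functional $\phi(\rho)$, the resulting bound on $\int_{Q_T}|\rho_r|^2/\rho^3$, the $L^1_t L^\infty_r$ bound on $u_r$, and a maximum principle for $v=1/\rho$. The role of $\delta\rho^2$ is rather to make $h_\delta''\geq 2\delta>0$ so that the viscous dissipation controls $\int|\rho_r|^2$ independently of the size of $\rho$.

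At the $\e$-uniform stage, the two estimates you explicitly defer --- local $L^{\gamma+1}$ integrability of $\rho^\e$ and the local bound on $\rho^\e|u^\e|^3+(\rho^\e)^{\gamma+\theta}$ --- are the technical heart of the paper and cannot be imported wholesale from \cite{CP}, because of the geometric source terms and the moving boundaries $a(\e)\to0$, $b(\e)\to\infty$: the paper proves them by testing the momentum equation against $\rho\,\omega\int_r^b\rho u\,\omega\,dy$ (Lemma \ref{lemma:3.2}, with the auxiliary bootstrap \eqref{3.8a} for $\e\int\rho^3$) and by integrating the entropy balance for $\psi(s)=\tfrac12 s|s|$ over $(r,b)$ (Lemma \ref{HI:2}), and it is precisely there --- not in boundary terms of the basic energy identity, which vanish exactly by \eqref{eq:bc} once one works with the relative energy \eqref{E} --- that the condition \eqref{epsilon-delta} is consumed. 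Finally, in the $H^{-1}_{loc}$ compactness step the term $\e\eta_{rr}$ is not $O(\e)$ in $H^{-1}_{loc}$ by Cauchy--Schwarz alone when $\gamma<2$: the dissipation bound controls $\e\int\rho^{\gamma-2}|\rho_r|^2$, which degenerates as $\rho\to0$, and one needs the additional truncation estimate $\e^{3/2}\int_0^T\int_K|\rho^\e_r|^2\,drdt\to0$ (the claim \eqref{small_rho_est_1}, in the spirit of \cite{LPS}) to conclude that $\e\eta^\e_r\to0$ in some $L^p_{loc}$ with $p>1$. As written, the proposal is an accurate roadmap of the paper's argument but leaves its essential estimates unproved.
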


\begin{remark}
Theorem \ref{main} indicates that there is no concentration formed in the vanishing viscosity limit of the viscosity
approximate solutions to the global entropy solution of the compressible Euler equations \eqref{eq:Eu}
with initial condition \eqref{eq:ic}, which
is of finite-energy \eqref{finite_energy} and obeys the entropy inequality \eqref{entropy_sol}.
\end{remark}

\begin{remark}
To achieve \eqref{epsilon-delta}, it suffices to choose
$\delta{}={}\e b^{-{k_1}}$ and $\rhob{}={}b^{-k_2}$
for any $k_1\ge n$ and $k_2\ge \frac{n}{\gamma}$.
\end{remark}
\end{section}

\begin{section}{Global Existence of a Unique Classical
Solution of the Approximate Euler Equations with Artificial Viscosity}

The equations in \eqref{eq:NS} form a quasilinear parabolic system for $(\rho, m)$.
In this section, we show the existence of a unique smooth solution $(\rho, m)$,
equivalently $(\rho, u)$ with $u=\frac{m}{\rho}$,
and make some estimates of the solution whose bounds may depend on the parameter $\e\in (0,1]$
(except the energy bound $E_0$ below).
For $\beta\in(0, 1)$, let $C^{2+\beta}([a,b])$ and $C^{2+\beta,1+\frac{\beta}{2}}(Q_T)$
be the usual H\"older and parabolic H\"older spaces, where
$Q_T=[0,T]\times (a,b)$ ({\it cf.} \cite{Ladyzhenskaja}).
For simplicity, we will drop the $\e$--dependence of the involved functions
in this section.

\begin{theorem}\label{theorem:exist}
Let $(\rho_0,m_0)\in  (C^{2+\beta}([a,b]))^2$ with $\inf_{a\le r\le b}\rho_0(r)>0$
and satisfy \eqref{eq:bc} and \eqref{1.6a}--\eqref{1.6b}.
Then there exists a unique global solution $(\rho, m)$
of problem \eqref{eq:NS}--\eqref{eq:ic} for $\gamma \in (1,3]$ such that
$$
(\rho,m){}\in{} (C^{2+\beta,1+\frac{\beta}{2}}(Q_T))^2, \quad \inf_{Q_T}\rho>0\qquad\,\,\,\,\mbox{for all}\,\,\, T>0.
$$
\end{theorem}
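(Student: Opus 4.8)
The plan is to establish local existence of a classical solution by a fixed-point argument in parabolic Hölder spaces, then to derive a priori estimates—uniform on any time interval $[0,T]$—that prevent blow-up of $\rho$, $1/\rho$, and $m$, and finally to continue the local solution to all time by bootstrapping. Throughout, I work with the variables $(\rho,u)$, $u=m/\rho$, for which the first equation of \eqref{eq:NS} is a linear parabolic equation for $\rho$ (given $m$), while the momentum equation, rewritten with $u$, becomes a parabolic equation for $u$ with coefficients involving $\rho$ and $\rho_r$; the Neumann-type condition $\rho_r|_{r=a}=0$ and $u|_{r=a}=0$ (from $m|_{r=a}=0$) together with the Dirichlet data at $r=b$ make the linearized problems well-posed. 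The compatibility conditions \eqref{1.6a}--\eqref{1.6b} are exactly what is needed so that the initial data match the boundary conditions to the order required for a $C^{2+\beta,1+\beta/2}$ solution.

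First I would set up the local existence: fix a small $\tau>0$, and on $Q_\tau$ define a map $\Phi$ sending $(\bar\rho,\bar m)$ (in a ball of $(C^{\beta,\beta/2})^2$ around the initial data, with $\inf\bar\rho>0$) to the solution $(\rho,m)$ of the two \emph{linear} parabolic problems obtained by freezing the nonlinear coefficients at $(\bar\rho,\bar m)$. Linear parabolic Schauder theory ({\it cf.} \cite{Ladyzhenskaja}) gives $(\rho,m)\in (C^{2+\beta,1+\beta/2}(Q_\tau))^2$ with estimates depending only on the frozen data; choosing $\tau$ small makes $\Phi$ a contraction, and the strong maximum principle applied to the $\rho$-equation keeps $\inf_{Q_\tau}\rho>0$ (here the sign-definite viscosity and the fact that $\rho=0$ would have to be an interior or boundary minimum of a supersolution are used; the added term $\delta\rho^2$ in $p_\delta$ plays no role in this particular bound but is relevant below for $u$). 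This yields a unique local classical solution on a maximal interval $[0,T^*)$.

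Next come the a priori estimates, which are the crux. The basic one is the energy identity: multiplying \eqref{eq:NS} appropriately and integrating over $(a,b)$ with the boundary conditions \eqref{eq:bc}, one obtains that $\int_a^b \eta^*_\delta(\rho,m)\,r^{n-1}dr$ (with $\eta^*_\delta$ the mechanical energy for $p_\delta$) is non-increasing up to the boundary flux at $r=b$, which is controlled using \eqref{epsilon-delta}; this bounds the energy by $E_0$ uniformly in $\e$. From the energy bound plus the parabolic structure one extracts an $L^2_{t,r}$ bound on $\sqrt\e\,(\rho_r,(r^{n-1}m)_r/r^{n-1})$. To upgrade to the pointwise bounds that are $\e$-dependent but finite on $Q_T$, I would: (a) get an upper bound on $\rho$ by a maximum-principle/De Giorgi argument on the $\rho$-equation, using the $L^2$ control of $m$ through the lower-order term; (b) get a strictly positive lower bound on $\rho$ on $Q_T$ by the Harnack inequality or a barrier for the parabolic $\rho$-equation—this is where the viscosity is essential and where one must track that the bound may degenerate as $\e\to0$ but stays positive for fixed $\e$ and fixed $T$; (c) with $\rho$ now bounded above and below, the momentum/velocity equation is uniformly parabolic, so $L^\infty$ and then $C^{\beta,\beta/2}$ bounds on $u$ (hence $m$) follow from standard parabolic estimates, and finally Schauder estimates give $C^{2+\beta,1+\beta/2}$ bounds on $(\rho,m)$ on $\overline{Q_T}$.

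The main obstacle is step (b) together with the coupling: the lower bound $\inf_{Q_T}\rho>0$ must be derived before the $u$-equation is known to be uniformly parabolic, yet the $\rho$-equation's coefficients and the growth of its solutions depend on $m$, which in turn is controlled only once $\rho$ is bounded below. I would resolve this by a continuity/bootstrap argument: on the maximal interval, assume $\inf\rho\ge c_0>0$ and $\sup(\rho+|m|)\le C_0$, derive from these and the energy bound \emph{improved} bounds (strictly better constants on a slightly larger time, via Schauder and the maximum principle with the lower-order term absorbed using Gronwall), and conclude that the set of times where the bounds hold is open and closed, hence all of $[0,T]$. This closes the estimates on every $[0,T]$, so $T^*=\infty$ and the solution extends globally; uniqueness follows from the contraction estimate applied on successive short intervals, or directly from a Gronwall argument on the difference of two solutions in the class $\inf\rho>0$. $\qquad\blacksquare$
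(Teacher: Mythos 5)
Your overall skeleton (local existence by linearization and Schauder theory, then a priori estimates on $[0,T]$, then continuation) matches the paper's framework, which likewise reduces everything to a priori bounds on $\rho$, $1/\rho$, $m/\rho$ and invokes Ladyzhenskaja--Solonnikov--Uraltseva for existence. The gap is in the heart of the matter: how the estimates actually close. Your step (a) proposes to get $\sup_{Q_T}\rho$ directly from the continuity equation by a maximum principle or De Giorgi argument, ``using the $L^2$ control of $m$ through the lower-order term.'' But the source in the $\rho$-equation is $-m_r-\tfrac{n-1}{r}m$, a \emph{derivative} of $m$, and the energy estimate only gives $m\in L^\infty_t L^{2\gamma/(\gamma+1)}_r$ (via $\int \rho u^2\le E_0$ and $\int\rho^{\max\{2,\gamma\}}\le C$) together with $\e\int\rho|u_r|^2$; this is far below the integrability needed for a divergence-form source in a 1-D De Giorgi/Moser iteration. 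The paper instead goes through a chain you do not reproduce: a time-integrated bound $\int_0^T\|\rho\|_{L^\infty}^{2\max\{2,\gamma\}}dt\le C$ from the energy; a maximum principle for the quasi-convex Riemann invariants $w=u\pm R(\rho)$ yielding the crucial \emph{linear} dependence $\|u\|_{L^\infty}\lesssim 1+\|\rho^\theta\|_{L^\infty}$; and then a higher-order energy estimate on $(\rho_r,m_r)$ in $L^\infty_tL^2_r$ and $(\rho_{rr},m_{rr})$ in $L^2_{t,r}$, closed by Gronwall precisely because of that linear dependence. Only then does $\|\rho\|_{L^\infty(Q_T)}$ follow (by Sobolev embedding from the $H^1$ bound), and $\|u\|_{L^\infty}$ with it.

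Your step (b) has the same problem in sharper form. A Harnack inequality or barrier for the $\rho$-equation cannot produce $\inf_{Q_T}\rho>0$, because the forcing $-\div m$ has no sign and is not pointwise controlled at that stage (even for local existence, $\rho$ is not a supersolution of the heat equation; the positivity there comes from the transport form $\rho_t+u\rho_r+\rho(u_r+\tfrac{n-1}{r}u)=\e\Delta\rho$ and boundedness of $u_r$ on a short interval). The paper's lower bound requires two further ingredients you omit: an entropy-type estimate with a convex function of $1/\rho$ giving $\int_{Q_T}|\rho_r|^2/\rho^3\le C$, hence $\int_0^T\|1/\rho\|_{L^\infty}dt\le C$; and, combined with the $L^2_{t,r}$ bound on $m_{rr}$, the bound $\int_0^T\|u_r\|_{L^\infty}dt\le C$, after which the maximum principle applied to $v=1/\rho$ (which satisfies $v_t+(u-\tfrac{\e(n-1)}{r})v_r-\e v_{rr}\le(u_r+\tfrac{n-1}{r}u)v$) gives the uniform lower bound. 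Your concluding ``continuity/bootstrap'' paragraph acknowledges the circularity but does not supply a mechanism that actually improves the assumed bounds; without the Riemann-invariant lemma and the higher-order energy estimate, the open-and-closed argument does not close. So the proposal, as written, has a genuine gap at the decisive steps.
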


The nonlinear terms in \eqref{eq:NS} have singularities when $\rho=0$ or $|m|=\infty$.
To establish Theorem \ref{theorem:exist}, we derive {\it a priori} estimates for a generic solution in $C^{2,1}(Q_T)$
with $\|(\rho,\frac{1}{\rho},\frac{m}{\rho})\|_{L^\infty(Q_T)}<\infty$,
showing by this that the solution takes values in a region (determined {\it a priori}) away from
the singularities.
With the {\it a priori} estimates, the existence of the solution
can be derived
from the general theory of the quasilinear parabolic systems,
by a suitable linearization techniques;
see
Section 5
and Theorem 7.1 in Ladyzhenskaja-Solonnikov-Uraltseva \cite{Ladyzhenskaja}.

The {\it a priori} estimates are obtained by the following arguments:
First we derive the estimates based on the balance of total energy.
Then, in Lemma \ref{lemma:uniform}, we use the maximum principle for the Riemann invariants
and the total energy estimates to show that the $L^\infty$--norm of $u=\frac{m}{\rho}$ depends linearly
on the $L^\infty$--norm of $\rho^{(\gamma-1)/2}$.
This is in turn used in Lemma \ref{lemma:high-derivative-1}
to close the higher energy estimates for $(\rho_r, m_r)$.
With that, we obtain the {\it a priori} upper bound $\rho$ in $L^\infty$
and, by using Lemma \ref{lemma:uniform} again, the {\it a priori} bounds of the $L^\infty$--norms
of $m$ and $u$.
Finally, to show the positive lower bound for $\rho$,
we obtain an estimate on $\int_0^t\|u_r(t,\cdot)\|_{L^\infty}\,dt$.

\medskip
We proceed now with the derivation of the {\it a priori} estimates.
Let $(\rho, m)$, with $\rho>0$, be a $C^{2,1}(Q_T)$
solution of \eqref{eq:NS}--\eqref{eq:ic}
with
\eqref{1.6a}--\eqref{1.6b}.

\subsection{Energy Estimate}
As usual, we denote by
\begin{equation}\label{2.1a}
 \eta^*_\delta{}={}\frac{m^2}{2\rho}{}+{}h_\delta(\rho),\qquad
 q^*_\delta{}={}\frac{m^3}{2\rho^2}{}+{}m h'_\delta(\rho),
\end{equation}
as the mechanical energy pair of system \eqref{eq:NS} with $\e=0$,
where $h_\delta(\rho):=\rho e_\delta(\rho)$ for the internal energy $e_\delta(\rho):=\int_0^\rho\frac{p_\delta(s)}{s^2}\,ds$.

Note that $(\rhob,0)$ is the only constant equilibrium state of the system.
For the mechanical energy pair $(\eta^*_\delta, q^*_\delta)$ in \eqref{2.1a},
we denote
\begin{equation}
\label{E}
 \bar{\eta}^*_\delta(\rho,m)
 {}={}\eta^*_\delta(\rho,m){}-{}\eta^*_\delta(\rhob,0){}-{}(\eta^*_\delta)_\rho(\rhob,0)(\rho-\rhob),
\end{equation}
as the total energy relative to the constant equilibrium state $(\rhob,0)$.

\begin{proposition}\label{2.1}
Let
$$
E_0:=\sup_{\e>0}\int_a^b \bar{\eta}^*_\delta(\rho_0^\e(r), m_0^\e(r))r^{n-1}dr <\infty.
$$
Then, for the viscosity approximate solution $(\rho, m)=(\rho, \rho u)$ determined by Theorem {\rm \ref{theorem:exist}}
for each fixed $\e>0$,
we have
\begin{eqnarray}
&&
\sup_{t\in[0,T]}\int_a^b \big(\frac{1}{2}\rho u^2
+ \bar{h}_\delta(\rho, \rhob)\big) r^{n-1} dr
\notag\\
&&\qquad
+{}\e\int_{Q_T}\Big(  h''_\delta(\rho)|\rho_r|^2{}+{}\rho|u_r|^2+(n-1)\frac{\rho u^2}{r^2}\Big)\,r^{n-1}drdt{}
{}\le {}E_0,
\label{Energy_est}
\end{eqnarray}
where
\begin{equation}\label{Energy_est-a}
\bar{h}_\delta(\rho, \rhob)=h_\delta(\rho)-h_\delta(\rhob)-h'_\delta(\rhob)(\rho-\rhob)
\ge c_1\rho(\rho^\theta-\rhob^\theta)^2, \qquad \theta=\frac{\gamma-1}{2},
\end{equation}
for some constant $c_1=c_1(\rhob,\gamma)>0$.
Furthermore, for any $t\in[0,T]$,
the measure of set $\{\rho(t,\cdot)>\frac{3}{2}\rhob\}$ is less than $c_2E_0$
for some $c_2=c_2(\rhob,\gamma)>0$.
\end{proposition}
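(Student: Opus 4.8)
The plan is to derive an exact energy balance for the \emph{relative} mechanical energy $\bar\eta^*_\delta$ of \eqref{E} by testing the viscous system \eqref{eq:NS} with $\nabla\bar\eta^*_\delta=\big((\eta^*_\delta)_\rho-(\eta^*_\delta)_\rho(\rhob,0),\,u\big)$, and then to reduce the two pointwise statements \eqref{Energy_est}--\eqref{Energy_est-a} to elementary one-variable convexity facts. Since the solution $(\rho,m)$ supplied by Theorem \ref{theorem:exist} is smooth with $\rho>0$ on the bounded strip $Q_T$, all the manipulations below are classical. First I would record the two algebraic identities that make $(\eta^*_\delta,q^*_\delta)$ compatible with the geometric source terms: being an entropy pair of \eqref{eq:NS} with $\e=0$, it obeys the entropy relation $(q^*_\delta)_r=(\eta^*_\delta)_\rho\,m_r+u\,(\tfrac{m^2}{\rho}+p_\delta)_r$ along any smooth state, and a direct computation gives $(\eta^*_\delta)_\rho\,m+u\,\tfrac{m^2}{\rho}=q^*_\delta$ for the geometric flux. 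Because $(\rhob,0)$ is a steady solution of \eqref{eq:NS}, subtracting $(\eta^*_\delta)_\rho(\rhob,0)$ times the mass equation only replaces $q^*_\delta$ in the resulting balance by $\bar q^*_\delta:=q^*_\delta-(\eta^*_\delta)_\rho(\rhob,0)\,m=m\big(\tfrac{m^2}{2\rho^2}+h'_\delta(\rho)-h'_\delta(\rhob)\big)$. Multiplying the mass equation by $(\bar\eta^*_\delta)_\rho$, the momentum equation by $u$, adding, and multiplying through by $r^{n-1}$ then yields
\[
\big(r^{n-1}\bar\eta^*_\delta\big)_t+\big(r^{n-1}\bar q^*_\delta\big)_r
=\e\,(\bar\eta^*_\delta)_\rho\,\big(r^{n-1}\rho_r\big)_r+\e\,r^{n-1}\,u\,w_r,\qquad w:=m_r+\tfrac{n-1}{r}\,m .
\]

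Next I would integrate this identity over $r\in(a,b)$ and integrate the viscous terms by parts. All boundary terms vanish: at $r=a$ one has $\rho_r=m=u=0$ by \eqref{eq:bc}, while at $r=b$ one has $m=u=0$ and $\rho\equiv\rhob$, so that $(\bar\eta^*_\delta)_\rho=h'_\delta(\rhob)-h'_\delta(\rhob)=0$ and $\bar q^*_\delta=0$ there. Writing $m=\rho u$ in the remaining bulk integrals, the cross terms proportional to $u\,\rho_r u_r$ produced by the two viscous contributions cancel exactly, while the geometric part of $\e r^{n-1}u\,w_r$ reorganizes --- after one further integration by parts, again with no boundary contribution since $u$ vanishes at $r=a,b$ --- into the nonnegative term $\e(n-1)\,r^{n-1}\tfrac{\rho u^2}{r^2}$. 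The outcome is the energy identity
\[
\frac{d}{dt}\int_a^b r^{n-1}\bar\eta^*_\delta\,dr
=-\,\e\int_a^b\Big(h''_\delta(\rho)\,|\rho_r|^2+\rho\,|u_r|^2+(n-1)\,\tfrac{\rho u^2}{r^2}\Big)r^{n-1}\,dr .
\]
Since $h''_\delta(\rho)=p_\delta'(\rho)/\rho>0$ for $\rho>0$, the right-hand side is $\le 0$; integrating in $t$ and using $\bar\eta^*_\delta=\tfrac12\rho u^2+\bar h_\delta(\rho,\rhob)\ge 0$ together with $\int_a^b r^{n-1}\bar\eta^*_\delta(\rho_0^\e,m_0^\e)\,dr\le E_0$ yields \eqref{Energy_est}.

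It then remains to establish the elementary inequality \eqref{Energy_est-a} and the measure bound. I would split $h_\delta(\rho)=\tfrac{\kappa}{\gamma-1}\rho^\gamma+\delta\rho^2$, so that $\bar h_\delta(\rho,\rhob)=\tfrac{\kappa}{\gamma-1}\big(\rho^\gamma-\rhob^\gamma-\gamma\rhob^{\gamma-1}(\rho-\rhob)\big)+\delta(\rho-\rhob)^2\ge\tfrac{\kappa}{\gamma-1}\big(\rho^\gamma-\rhob^\gamma-\gamma\rhob^{\gamma-1}(\rho-\rhob)\big)$, and then invoke the standard fact that this last expression is bounded below by $c\,\rho(\rho^\theta-\rhob^\theta)^2$ with $\theta=\tfrac{\gamma-1}{2}$: both functions of $\rho$ vanish to exactly second order at $\rho=\rhob$ with positive second derivative, grow at the same rate $\rho^\gamma$ as $\rho\to\infty$, and the left one is positive while the right one vanishes at $\rho=0$, so their ratio has a positive infimum $c_1=c_1(\rhob,\gamma)$ on $[0,\infty)$. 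For the measure statement, on $\{\rho(t,\cdot)>\tfrac32\rhob\}$ the bound \eqref{Energy_est-a} gives $\bar h_\delta(\rho,\rhob)\ge c_1\,\tfrac32\,\rhob\cdot\rhob^{2\theta}\big((\tfrac32)^\theta-1\big)^2$, which (as $1+2\theta=\gamma$) is a fixed positive multiple $\tilde c\,\rhob^\gamma$ of $\rhob^\gamma$ with $\tilde c=\tilde c(\rhob,\gamma)>0$; since $\int_a^b r^{n-1}\bar h_\delta(\rho,\rhob)(t,\cdot)\,dr\le E_0$ by \eqref{Energy_est}, Chebyshev's inequality bounds the $r^{n-1}dr$--measure of this set by $E_0/(\tilde c\,\rhob^\gamma)=:c_2E_0$ with $c_2=c_2(\rhob,\gamma)$.

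The one genuinely delicate point is the second step: verifying that the geometric source terms interact with the artificial viscosity so as to leave \emph{exactly} the three nonnegative bulk dissipation terms of \eqref{Energy_est} with no remainder, and that every boundary term generated in the integrations by parts is annihilated by the boundary conditions \eqref{eq:bc} --- which is precisely why one must work with the relative energy $\bar\eta^*_\delta$ rather than with $\eta^*_\delta$ itself (the subtracted linear term cancels the only boundary term, at $r=b$, that would otherwise survive). The remaining ingredients are the one-dimensional entropy-pair computation and an elementary scalar convexity inequality.
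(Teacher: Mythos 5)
Your proposal is correct and follows essentially the same route as the paper: test the viscous system with $\nabla\bar\eta^*_\delta$, integrate by parts using the boundary conditions \eqref{eq:bc} (under which all boundary terms indeed vanish), identify the dissipation as the Hessian quadratic form, which equals exactly $h''_\delta(\rho)|\rho_r|^2+\rho|u_r|^2$ once $m_r-u\rho_r=\rho u_r$ is used, plus the geometric term $(n-1)\rho u^2/r^2$, and then derive \eqref{Energy_est-a} by the scalar convexity/asymptotics comparison and the measure bound by Chebyshev. You merely spell out details the paper compresses (the boundary-term cancellations and the lower bound $c_1\rho(\rho^\theta-\rhob^\theta)^2$), so no further comment is needed.
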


\begin{proof}
We multiply the first equation in \eqref{eq:NS} by $(\bar{\eta}^*_\delta)_\rho r^{n-1}$,
the second in \eqref{eq:NS} by $(\bar{\eta}^*_\delta)_m r^{n-1}$,
and then add them up to obtain
\begin{eqnarray*}
\label{eq:energy-1}
&&\big(\bar{\eta}^*_\delta r^{n-1}\big)_t
{}+{}\big((q^*_\delta-(\eta^*_\delta)_\rho(\rhob,0)m)r^{n-1}\big)_r{}\notag\\
&&= \e r^{n-1}\big( \rho_{rr}{}+{}\frac{n-1}{r}\rho_r\big)\big((\eta^*_\delta)_\rho-(\eta^*_\delta)_\rho(\rhob,0)\big)
{}+{}\e r^{n-1}\big(m_r+\frac{n-1}{r}m\big)_r(\eta^*_\delta)_m,
\end{eqnarray*}
that is,
\begin{eqnarray}
\label{eq:energy-2}
&&(\bar{\eta}^*_\delta r^{n-1})_t{}+{}\big((q^*_\delta-(\eta^*_\delta)_\rho(\rhob,0)m)r^{n-1}\big)_r{}
+{} (n-1)\e m (\eta^*_\delta)_mr^{n-3}\notag\\[2mm]
&&= \e ( \rho_{r}r^{n-1})_r\big((\eta^*_\delta)_\rho-(\eta^*_\delta)_\rho(\rhob,0)\big)
  +\e (m_rr^{n-1})_r (\eta^*_\delta)_m.
\end{eqnarray}
Integrating both sides of \eqref{eq:energy-2} over $Q_t$ for any $t\in (0,T]$ and
using the boundary conditions \eqref{eq:bc}, we have
\begin{eqnarray*}
&&
\int_a^b \bar{\eta}^*_\delta r^{n-1}\,dr
{}+{}\e\int_{Q_t}\Big(
 (\rho_r,m_r)\grad^2 \bar{\eta}^*_\delta(\rho_r,m_r)^\top{}+{}\frac{m^2}{2\rho r^2}\Big)
r^{n-1}\,drdt{}={}E_0. \qquad \label{Energy_est-b}
\end{eqnarray*}
Note that $(\rho_r,m_r)\grad^2\bar{\eta}^*_\delta(\rho_r,m_r)^\top$ is a positive
quadratic form that dominates $h''_\delta(\rho)|\rho_r|^2$ and $\rho|u_r|^2$ so that
\begin{equation}
\label{est:energy_2}
\int_a^b \bar{\eta}^*_\delta r^{n-1}\,dr
{}+{}
\e \int_{Q_T} \left( (2\delta{}+{}\kappa\gamma\rho^{\gamma-2})|\rho_r|^2{}+{}\rho|u_r|^2
+(n-1)\frac{\rho u^2}{r^2}\right)\,r^{n-1}drdt{}\leq{}E_0.
\end{equation}
Estimate \eqref{est:energy_2} also implies
\begin{equation*}
\sup_{t\in[0,T]}\int_a^b\big(\rho u^2
+ \bar{h}_\delta(\rho,\rhob)\big)r^{n-1}\,dr{}\leq{} E_0.
\end{equation*}
The function
$\bar{h}_\delta(\rho,\rhob)$
is positive, quadratic in $\rho-\rhob$ for $\rho$ near $\rhob$,
and grows as $\rho^{\max\{\gamma, 2\}}$
for large values of $\rho$.
In particular, there exists $c_1=c_1(\rhob,\gamma)>0$ such that \eqref{Energy_est-a} holds.
Thus, for any $t\in[0,T]$,
the measure of set $\{\rho(t,\cdot)>\frac{3}{2}\rhob\}$ is less than $c_2 E_0$
for some $c_2>0$.
\end{proof}

\medskip
With the basic energy estimate \eqref{Energy_est}, we have
\begin{lemma}\label{2.1-b}
There exists $C=C(\e, T, E_0)>0$ such that
\begin{equation} \label{est:rho_int_2_gamma}
\int_0^T \|\rho(t,\cdot)\|_{L^\infty(a,b)}^{2\max\{2,\gamma\}}\,dt{}\leq{} C.
\end{equation}
\end{lemma}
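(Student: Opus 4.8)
The plan is to turn the two pieces of information contained in Proposition~\ref{2.1} — a uniform-in-$t$ control of a fractional power of $\rho$ in $L^2_r$ and an $L^2(Q_T)$ control of a weighted spatial derivative of $\rho$ — into an $L^\infty_r$ bound by means of a one-dimensional interpolation (Gagliardo--Nirenberg) inequality. Since $r$ ranges over the fixed interval $(a,b)$ with $0<a=a(\e)$ and $1<b=b(\e)<\infty$, the geometric weight $r^{n-1}$ lies between two positive $\e$-dependent constants; throughout I may therefore drop it and absorb the resulting powers of $a$ and $b$, as well as the length $b-a$, into a constant $C=C(\e,T,E_0)$.

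Set $k:=\max\{2,\gamma\}$ and $v:=\rho^{k/2}$, which for each fixed $t\in[0,T]$ is smooth and strictly positive on $[a,b]$ by Theorem~\ref{theorem:exist}. The first step is to prove
\[
\sup_{t\in[0,T]}\|v(t,\cdot)\|_{L^2(a,b)}^2=\sup_{t\in[0,T]}\int_a^b\rho^{k}\,dr\le C .
\]
If $\gamma\ge 2$ this follows from \eqref{Energy_est} together with the pointwise bound $\bar{h}_\delta(\rho,\rhob)\ge \tfrac{\kappa}{2(\gamma-1)}\rho^\gamma-C$, obtained by a Young-inequality estimate of the linear term in $\bar{h}_\delta(\rho,\rhob)$. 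If $\gamma<2$ (so $k=2$) I use instead the convexity bound $\bar{h}_\delta(\rho,\rhob)\ge \delta(\rho-\rhob)^2$, which by \eqref{Energy_est} gives $\int_a^b(\rho-\rhob)^2\,dr\le C$ and hence $\int_a^b\rho^2\,dr\le C$ since $\rhob<1$. The second step is to prove
\[
\int_0^T\|v_r(t,\cdot)\|_{L^2(a,b)}^2\,dt=\frac{k^2}{4}\int_{Q_T}\rho^{k-2}|\rho_r|^2\,dr\,dt\le C ,
\]
which is again read off from the $\e$-weighted dissipation integral in \eqref{Energy_est} (recall $h''_\delta(\rho)=\kappa\gamma\rho^{\gamma-2}+2\delta$): for $\gamma\ge2$ one retains the contribution $\kappa\gamma\rho^{\gamma-2}|\rho_r|^2$, and for $\gamma<2$ one has $k-2=0$ and retains $2\delta|\rho_r|^2$.

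The final step combines these with the interpolation inequality $\|f\|_{L^\infty(a,b)}^2\le C\big(\|f\|_{L^2(a,b)}^2+\|f\|_{L^2(a,b)}\|f_r\|_{L^2(a,b)}\big)$, the constant depending only on $b-a$, applied to $f=v(t,\cdot)$. Together with the first displayed bound this gives, for a.e.\ $t\in[0,T]$, $\|v(t,\cdot)\|_{L^\infty(a,b)}^2\le C\big(1+\|v_r(t,\cdot)\|_{L^2(a,b)}\big)$, hence $\|v(t,\cdot)\|_{L^\infty(a,b)}^4\le C\big(1+\|v_r(t,\cdot)\|_{L^2(a,b)}^2\big)$; integrating over $t\in[0,T]$ and invoking the second displayed bound yields $\int_0^T\|v(t,\cdot)\|_{L^\infty(a,b)}^4\,dt\le C$. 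Since $\|v(t,\cdot)\|_{L^\infty(a,b)}=\|\rho(t,\cdot)\|_{L^\infty(a,b)}^{k/2}$, this is precisely \eqref{est:rho_int_2_gamma} with exponent $2k=2\max\{2,\gamma\}$.

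There is no serious obstacle; the only point requiring attention is the dichotomy $\gamma<2$ versus $\gamma\ge2$. For $\gamma<2$ one is forced to exploit the $\delta\rho^2$ part of $h_\delta$ — using only the $\kappa\rho^\gamma$ part would produce the weaker exponent $2\gamma<4$ — so the constant $C$ then carries a factor $\delta(\e)^{-1}$, which is harmless because $\e$ is held fixed throughout this lemma. The remaining manipulations are routine.
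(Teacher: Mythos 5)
Your proof is correct and takes essentially the same route as the paper: both combine the time-uniform energy bound on $\int_a^b\rho^{\max\{2,\gamma\}}\,dr$ with the viscous dissipation bound on $\int_{Q_T}\rho^{\max\{2,\gamma\}-2}|\rho_r|^2\,dr\,dt$ through a one-dimensional interpolation in $r$. The only (cosmetic) differences are that the paper writes the Gagliardo--Nirenberg step out by hand, anchoring at a point where $\rho=\tfrac32\bar{\rho}$ furnished by the measure estimate of Proposition~\ref{2.1}, and runs the $\rho^\gamma$ and $\rho^2$ cases as two separate passes rather than unifying them with $k=\max\{2,\gamma\}$.
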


\begin{proof}
In the case that the measure of set $\{\rho(t,\cdot)>\frac{3}{2}\rhob\}$ is zero, we have
the uniform upper bound $\frac{3}{2}\rhob$ for $\rho(t,r)$.
Otherwise, for $r\in (a,b)$, let $r_0\in (a,b)$ be the closest to point $r$ such that
$\rho(t,r_0)=\frac{3}{2}\rhob$.
Clearly, $|r-r_0|\leq c(\rhob)E_0$.
With such a choice of $r_0$, we have
\begin{eqnarray}
\label{est:rho_bounded}
&&|\rho^\gamma(t,r)-\rho^\gamma(t,r_0)|\\
&&\leq \gamma\Big|\int_{r_0}^r\rho^{\gamma-1}(t,y) \rho_y(t,y)\,dy\Big| \notag \\
&&\leq C\,\Big|\int_{r_0}^r \rho^\gamma(t,y)y^{n-1}\,dy \Big|^{\frac{1}{2}}
\Big(\int_a^b\rho^{\gamma-2}(t,y)|\rho_y(t,y)|^2y^{n-1}\,dy\Big)^{\frac{1}{2}} \notag\\
\label{est:rho_gamma_energy}
&&\leq{} C\Big(\int_a^b\rho^{\gamma-2}(t,r)|\rho_r(t,r)|^2r^{n-1}\,dr\Big)^{\frac{1}{2}}.
\end{eqnarray}
Then estimate \eqref{est:energy_2} yields
\begin{equation}
\int_0^T \|\rho(t,\cdot)\|_{L^\infty(a,b)}^{2\gamma}\,dt{}\leq{} C,
\end{equation}
where $C$ stands for a generic function of
the parameters: $\gamma,\e,\delta, T, E_0$, and $\bar{\rho}$.

Repeating the argument with $\rho^2$ instead of $\rho^\gamma$, we conclude
\eqref{est:rho_int_2_gamma}.
\end{proof}

From now on, the constant $C>0$ is a universal constant that may {\it depend on the parameter $\e>0$} in \S 2.2--\S 2.3,
while the constant $M>0$ below is another universal constant {\it independent of the parameter $\e$} as $E_0$ from \S 3, though both of
them may also depend on $T>0$, $E_0$, and other parameters; we will also specify their dependence whenever needed.

\begin{subsection}{Maximum Principle Estimates}
Furthermore, we have
\begin{lemma}
\label{lemma:uniform}
There exists $C=C(a,T,E_0)$ such that, for any $t\in[0,T],$
\begin{equation}
 \label{est:u_max}
\|u\|_{L^\infty(Q_t)}\leq{} C\big(\|u_0+R(\rho_0)\|_{L^\infty(a,b)}{}
+{} \|u_0-R(\rho_0)\|_{L^\infty(a,b)}{}+{}\|R(\rho)\|_{L^\infty(Q_t)}\big),
\end{equation}
where
\begin{equation}
\label{Riemann.invariants}
R(\rho){}={}\int_0^\rho\frac{\sqrt{p_\delta'(s)}}{s}\,ds.
\end{equation}
\end{lemma}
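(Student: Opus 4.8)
I would pass to the unknowns $(\rho,u)$, $u=\frac{m}{\rho}$, and diagonalize the first-order part of \eqref{eq:NS} by means of the Riemann invariants. Write $c=c(\rho):=\sqrt{p_\delta'(\rho)}$ for the sound speed, so that $R'(\rho)=\frac{c(\rho)}{\rho}$ with $R$ as in \eqref{Riemann.invariants}, and set $w:=u+R(\rho)$, $z:=u-R(\rho)$. Eliminating $\rho_t$ from the momentum equation, multiplying the mass equation by $R'(\rho)$, and using $\frac{\rho_r}{\rho}=\frac{R(\rho)_r}{c}$ together with $(R(\rho))_{rr}+\frac{n-1}{r}(R(\rho))_r=R'(\rho)\big(\rho_{rr}+\frac{n-1}{r}\rho_r\big)+R''(\rho)\rho_r^2$, a routine computation yields
\begin{align*}
&w_t+(u+c)w_r-\e\big(w_{rr}+\tfrac{n-1}{r}w_r\big)=\e\,\tfrac{2\rho_r}{\rho}\,u_r-(n-1)\big(\tfrac{c}{r}+\tfrac{\e}{r^2}\big)u-\e\,R''(\rho)\rho_r^2,\\
&z_t+(u-c)z_r-\e\big(z_{rr}+\tfrac{n-1}{r}z_r\big)=\e\,\tfrac{2\rho_r}{\rho}\,u_r+(n-1)\big(\tfrac{c}{r}-\tfrac{\e}{r^2}\big)u+\e\,R''(\rho)\rho_r^2.
\end{align*}
The only algebraic fact needed is that $p_\delta''>0$ forces $R''(\rho)+\frac{2c}{\rho^2}=\frac{c}{\rho^2}+\frac{p_\delta''(\rho)}{2c\rho}>0$ for $\rho>0$; since $w_r=0$ (resp.\ $z_r=0$) gives $u_r=-\frac{c}{\rho}\rho_r$ (resp.\ $u_r=+\frac{c}{\rho}\rho_r$), the two $\rho_r^2$-terms on the right of the $w$- (resp.\ $z$-) equation combine at such a point to $-\e\big(R''+\frac{2c}{\rho^2}\big)\rho_r^2\le0$ (resp.\ $+\e\big(R''+\frac{2c}{\rho^2}\big)\rho_r^2\ge0$). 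Finally, by \eqref{eq:bc} we have $m=0$, hence $u=0$ (as $\rho>0$), on $\{r=a\}\cup\{r=b\}$, so $w=R(\rho)$ and $z=-R(\rho)$ there; and $R(\rho)\ge0$, so $z\le u\le w$ throughout $\overline{Q_t}$.

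\emph{Upper bound.} To bound $\sup_{Q_t}u$ it suffices to bound $\sup_{Q_t}w$, since $u\le w$. If $\sup_{\overline{Q_t}}w$ is attained at a point $(t^*,r^*)$ with $t^*>0$ and $r^*\in(a,b)$, then $w_r=0$, $w_{rr}\le0$, $w_t\ge0$ there; plugging $u_r=-\frac{c}{\rho}\rho_r$ into the $w$-equation and using the sign of the combined $\rho_r^2$-term,
\[
0\le w_t=\e w_{rr}-\e\big(R''+\tfrac{2c}{\rho^2}\big)\rho_r^2-(n-1)\big(\tfrac{c}{r}+\tfrac{\e}{r^2}\big)u\le-(n-1)\big(\tfrac{c}{r}+\tfrac{\e}{r^2}\big)u,
\]
so $u(t^*,r^*)\le0$ and $w(t^*,r^*)=u(t^*,r^*)+R(\rho(t^*,r^*))\le\|R(\rho)\|_{L^\infty(Q_t)}$. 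Otherwise the supremum lies on $\{t=0\}$ (where $w=u_0+R(\rho_0)$) or on $\{r=a\}\cup\{r=b\}$ (where $w=R(\rho)$), so in every case $\sup_{Q_t}u\le\sup_{Q_t}w\le\max\{\|u_0+R(\rho_0)\|_{L^\infty(a,b)},\,\|R(\rho)\|_{L^\infty(Q_t)}\}$.

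\emph{Lower bound.} Controlling $-\inf_{Q_t}z$ (and hence $-\inf_{Q_t}u$, since $u\ge z$) is the main difficulty, because the geometric term $(n-1)(\frac{c}{r}-\frac{\e}{r^2})u$ in the $z$-equation has no fixed sign and the pointwise argument above does not close. I would remove this obstruction with an exponential weight: substituting $u=z+R(\rho)$ in that term and setting $\tilde z:=e^{-\lambda t}z$ with $\lambda:=(n-1)a^{-1}\sup_{Q_T}c<\infty$ (finite because $\rho\in L^\infty(Q_T)$ a priori), $\tilde z$ satisfies a parabolic equation whose zeroth-order coefficient $\lambda-(n-1)(\frac{c}{r}-\frac{\e}{r^2})\ge(n-1)\frac{\e}{r^2}>0$ (since $r\ge a$) and whose right-hand side $e^{-\lambda t}\big(\e(R''+\frac{2c}{\rho^2})\rho_r^2+(n-1)(\frac{c}{r}-\frac{\e}{r^2})R(\rho)\big)\ge-(n-1)\frac{\e}{r^2}\|R(\rho)\|_{L^\infty(Q_t)}$, the $\rho_r^2$-term being nonnegative and hence dropped. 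If $\inf_{\overline{Q_t}}\tilde z<0$, at an interior point of attainment with $t>0$ one has $\tilde z_r=0$, $\tilde z_{rr}\ge0$, $\tilde z_t\le0$, so the equation forces $\big(\lambda-(n-1)(\frac{c}{r}-\frac{\e}{r^2})\big)\tilde z\ge-(n-1)\frac{\e}{r^2}\|R(\rho)\|_{L^\infty(Q_t)}$, whence, dividing by the coefficient (which is $\ge(n-1)\frac{\e}{r^2}$), $\tilde z\ge-\|R(\rho)\|_{L^\infty(Q_t)}$ there; on $\{t=0\}$, $\tilde z=u_0-R(\rho_0)$, and on $\{r=a\}\cup\{r=b\}$, $\tilde z=-e^{-\lambda t}R(\rho)\ge-\|R(\rho)\|_{L^\infty(Q_t)}$. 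Hence $\inf_{Q_t}\tilde z\ge-\max\{\|u_0-R(\rho_0)\|_{L^\infty(a,b)},\,\|R(\rho)\|_{L^\infty(Q_t)}\}$, and therefore $-\inf_{Q_t}u\le-\inf_{Q_t}z\le e^{\lambda t}\big(\|u_0-R(\rho_0)\|_{L^\infty(a,b)}+\|R(\rho)\|_{L^\infty(Q_t)}\big)$.

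Combining the two bounds gives \eqref{est:u_max} with $C=1+e^{\lambda T}$ and $\lambda=(n-1)a^{-1}\sup_{Q_T}\sqrt{p_\delta'(\rho)}$; in particular $C$ depends only on $a$, $T$ and the a priori bound on $\sup_{Q_T}\rho$ which, via Lemma \ref{2.1-b} and Proposition \ref{2.1}, is controlled by $E_0$ (and, through $\delta$, by $\e$). The crux is entirely the sign-indefinite geometric term in the $z$-invariant, handled here by the exponential weight; the rest is the pointwise maximum-principle calculus together with the elementary inequality $R''(\rho)+\frac{2c}{\rho^2}>0$.
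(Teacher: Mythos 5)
Your derivation of the equations for $w=u+R(\rho)$ and $z=u-R(\rho)$ is correct, and the observation that at a critical point of $w$ (resp.\ $z$) the terms $\e\frac{2\rho_r}{\rho}u_r$ and $\mp\e R''(\rho)\rho_r^2$ combine to $\mp\e\big(R''+\frac{2c}{\rho^2}\big)\rho_r^2$ with $R''+\frac{2c}{\rho^2}=\frac{c}{\rho^2}+\frac{p_\delta''}{2c\rho}>0$ is exactly the quasi-convexity \eqref{quasi} written in $(\rho,u)$-coordinates. Your upper bound is in fact a nice refinement: exploiting the favourable sign of $-(n-1)(\frac{c}{r}+\frac{\e}{r^2})u$ at an interior maximum of $w$ to force $u\le 0$ there avoids any Gronwall argument. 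The paper instead subtracts $(n-1)\int_0^t\|\frac{cu}{r}+\frac{\e u}{r^2}\|_{L^\infty(a,b)}\,d\tau$ from both $w$ and $-z$, applies the maximum principle, and closes with Gronwall using \eqref{est:rho_int_2_gamma}.

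The lower bound, however, has a genuine gap. Your exponential rate $\lambda=(n-1)a^{-1}\sup_{Q_T}\sqrt{p_\delta'(\rho)}$ depends on $\sup_{Q_T}\rho$, and your closing assertion that this quantity is ``controlled by $E_0$ via Lemma \ref{2.1-b} and Proposition \ref{2.1}'' is false: those results control only $\int_0^T\|\rho(t,\cdot)\|_{L^\infty(a,b)}^{2\max\{2,\gamma\}}\,dt$, not $\esssup_{t}\|\rho(t,\cdot)\|_{L^\infty}$. An a priori bound on $\sup_{Q_T}\rho$ is precisely what is established \emph{later} (Lemma \ref{lemma:2.4}), using the present lemma as input, so your constant makes the scheme circular. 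Worse, even granting such a bound, $C=1+e^{\lambda T}$ would depend \emph{exponentially} on $\|\rho\|_{L^\infty(Q_T)}^{(\gamma-1)/2}\sim\|R(\rho)\|_{L^\infty}$, whereas the whole purpose of \eqref{est:u_max} is the \emph{linear} dependence of $\|u\|_{L^\infty}$ on $\|R(\rho)\|_{L^\infty}$: estimate \eqref{est:u-uniform-1} needs $\|u\|_{L^\infty(Q_\tau)}^4\le C\big(1+\|\rho\|_{L^\infty(Q_\tau)}^{2\max\{1,\gamma-1\}}\big)$ with $C$ independent of $\|\rho\|_{L^\infty}$, and an exponential factor would prevent the Gronwall argument in Lemma \ref{lemma:high-derivative-1} from closing via \eqref{est:rho_uniform:1}. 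The repair is straightforward and brings you back to the paper's device: replace $\lambda t$ by the time-dependent weight $\Lambda(t)=(n-1)a^{-1}\int_0^t\|\sqrt{p_\delta'(\rho(\tau,\cdot))}\|_{L^\infty(a,b)}\,d\tau$, whose terminal value $\Lambda(T)$ \emph{is} bounded by Lemma \ref{2.1-b} (the relevant power of $\|\rho\|_{L^\infty}$ being integrable in time); the zeroth-order coefficient then still satisfies $\Lambda'(t)-(n-1)(\frac{c}{r}-\frac{\e}{r^2})\ge(n-1)\frac{\e}{r^2}>0$ and the rest of your argument goes through.
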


\begin{proof}
Consider system \eqref{eq:NS}.
The characteristic speeds of system \eqref{eq:NS} without artificial viscosity terms are
\[
\lambda_1{}={}u-\sqrt{p_\delta'(\rho)},\qquad \lambda_2{}={}u+\sqrt{p_\delta'(\rho)},
\]
and the corresponding right-eigenvectors are
\[
r_1{}={}\left[\begin{array}{c}
1\\\lambda_1 \end{array}\right],
\qquad r_2{}={}\left[\begin{array}{c}
1\\\lambda_2 \end{array}\right].
\]
The Riemann invariants $(w,z)$, defined by the conditions $\grad w\cdot r_1{}={}0$ and $\grad z\cdot r_2{}={}0$,
are given by
\[
w{}={}\frac{m}{\rho}{}+{}R(\rho),\qquad z{}={}\frac{m}{\rho}-R(\rho),
\]
with $R$ defined in \eqref{Riemann.invariants}.
They are quasi-convex:
\begin{equation}
\label{quasi}
\grad^\perp w  \grad^2 w (\grad^\perp w)^\top \geq0,\qquad
-\grad^\perp z \grad^2 z(\grad^\perp z)^\top\geq 0,
\end{equation}
where $\grad^2$ is the Hessian with respect to $(\rho,m)$ and $\grad^\perp{}={}(\partial_m,-\partial_\rho).$

Let us multiply the first equation in \eqref{eq:NS} by $w_{\rho}(\rho,m)$,
the second in \eqref{eq:NS}
by $w_{m}(\rho,m)$, and add them to obtain
\begin{eqnarray*}
&&w_{t}{}+{}\lambda_2 w_{r}{}+{}\frac{n-1}{r} u \sqrt{p_\delta'(\rho)}\\
&&={}-\e\big(\rho_r (w_{\rho})_r{}+{}m_r (w_m)_r\big){}+{}\e w_{rr}
+{} \frac{(n-1)\e}{r}\big(w_{r}{}-{}\frac{1}{r}mw_{m} \big),
\end{eqnarray*}
where $\lambda_2$ is as above.
Then
\begin{eqnarray*}
&&w_{t}{}+{}\big(\lambda_2-\frac{(n-1)\e}{r}\big) w_{r}{}-{}\e w_{rr}\\
&&={}-\e  (\rho_r,m_r)\grad^2 w(\rho_r,m_r)^\top
{}-{} \frac{n-1}{r}u\sqrt{p_\delta'(\rho)}-{}(n-1)\e\frac{u}{r^2}.
\end{eqnarray*}
We write
$$
(\rho_r, m_r){}={}\alpha\grad w{}+{}\beta\grad^\perp w,
$$
with
\[
 \alpha{}={}\frac{w_{r}}{|\grad w|^2},\qquad
 \beta{}={}\frac{\rho_r w_{m}-m_r w_{\rho}}{|\grad w|^2}.
\]
Then we can further write
\begin{eqnarray}\label{RI}
&&w_{t}{}+{}\lambda w_{r}{}-{}\e w_{rr}\notag\\
&&={}-\e \beta^2   \grad^\perp w \grad^2 w(\grad^\perp w)^\top
{}-{} \frac{n-1}{r}u \sqrt{p_\delta'(\rho)}-{}(n-1)\e\frac{u}{r^2},
\end{eqnarray}
where
\begin{eqnarray*}
\lambda{}={}\lambda_2{}-{}\frac{(n-1)\e}{r}
{}+{}\frac{\e\alpha}{|\grad w|^2}    \grad w\grad^2w (\grad w)^\top
{}+{}\frac{2\e \beta  }{|\grad w|^2}  \grad^\perp w \grad^2w(\grad w)^\top .
\end{eqnarray*}

By setting
\[
 \tilde{w}(t,r)
 {}={}w(t,r){}-{}(n-1)\int_0^t\Big\|\frac{\sqrt{p_\delta'(\rho(\tau,r))}u(\tau,r)}{r}
 {}+{}\frac{\e u(\tau,r)}{r^2} \Big\|_{L^\infty(a,b)}\,d\tau,
\]
and using the quasi-convexity property \eqref{quasi} and the classical maximum principle
applied to the parabolic equation \eqref{RI}, we obtain
\[
 \max_{Q_t}\tilde{w}
 {}\leq{} \max\big\{ \max_{(a,b)}w_{0}{},{}\max_{[0,t]\times (\{a\}\cup\{b\})}\tilde{w}\big\},
\]
or
\begin{multline*}
 \max_{Q_t} w{}\leq{}\max_{(a,b)} w_{0}{}+{}\|R(\rho)\|_{L^\infty(Q_t)}
 {}+{}C(\rhob,a)\int_0^t\Big(1+\|\rho(\tau,\cdot)\|^{\frac{1}{2}\max\{1,\gamma-1\}}_{L^\infty(a,b)}\Big)\|u(\tau,\cdot)\|_{L^\infty(a,b)}\,d\tau.
\end{multline*}
Similarly, we have
\begin{multline*}
 \max_{Q_t} (-z{})\leq{}\max_{(a,b)} (-z_{0}){}+{}\|R(\rho)\|_{L^\infty(Q_t)}
 {}+{}C\int_0^t\Big(1+\|\rho(\tau,\cdot)\|^{\frac{1}{2}\max\{1, \gamma-1\}}_{L^\infty(a,b)}\Big)\|u(\tau,\cdot)\|_{L^\infty(a,b)}\,d\tau.
\end{multline*}
Since $\rho\geq0$, it follows that
\begin{eqnarray}
\max_{Q_t} |u|&\leq& \max_{(a,b)} |w_{0}|{}+{}\max_{(a,b)} |z_{0}|{}+{}\|R(\rho)\|_{L^\infty(Q_t)}\notag \\
&& {}+{}C(a) \int_0^t\Big(1
 +\|\rho(\tau,\cdot)\|^{\frac{1}{2}\max\{1,\gamma-1\}}_{L^\infty(a,b)}\Big)\|u(\tau,\cdot)\|_{L^\infty(a,b)}
 \,d\tau.\label{est:u_max_0.1}
\end{eqnarray}
By \eqref{est:rho_int_2_gamma} and $\max\{1, \gamma-1\}<4\gamma$, we have
$$
\int_0^T \|\rho(\tau,\cdot)\|^{\frac{1}{2}\max\{1, \gamma-1\}}_{L^\infty}\,d\tau
{}\leq{}C.
$$
Then we conclude \eqref{est:u_max} from \eqref{est:u_max_0.1}.
\end{proof}
\end{subsection}

\begin{subsection}{Lower Bound on $\rho$}
\begin{lemma}\label{lemma:high-derivative-1}
There exists $C{}={}C ( \|(\rho_0,u_0)\|_{L^\infty(a,b)}, \|(\rho_0,m_0)\|_{H^1(a,b)}, \gamma)$
such that
\begin{equation}
 \sup_{t\in[0,T]}\int_a^b \big(|\rho_r|^2{}+{}|m_r|^2\big)\,dr
{}+{}\int_{Q_T} \big(|\rho_{rr}|^2
{}+{}|m_{rr}|^2)\,drdt {}\leq{} C.
\label{2.10a}
\end{equation}
\end{lemma}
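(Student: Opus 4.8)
The plan is to carry out an energy estimate at the level of $(\rho_r,m_r)$ for the quasilinear parabolic system \eqref{eq:NS}. Differentiating both equations in \eqref{eq:NS} in $r$, multiplying the resulting continuity equation by $\rho_r$, the resulting momentum equation by $m_r$, integrating over $(a,b)$, and adding, one arrives at an identity of the form
\[
\tfrac12\tfrac{d}{dt}\int_a^b\big(|\rho_r|^2+|m_r|^2\big)\,dr+\e\int_a^b\big(|\rho_{rr}|^2+|m_{rr}|^2\big)\,dr=\mathcal B(t)+\mathcal N(t),
\]
in which the artificial–viscosity terms produce, after integration by parts, the displayed dissipative term together with the boundary contributions $\mathcal B(t)$, while $\mathcal N(t)$ collects the interior contributions of the convective flux $\frac{m^2}{\rho}$, the pressure $p_\delta(\rho)$, the geometric sources $\frac{n-1}{r}(\cdot)$, and the cross term $\int m_{rr}\rho_r\,dr$ (the latter harmless by Young's inequality). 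The aim is to bound $\mathcal B(t)+\mathcal N(t)$ by $\tfrac12\e\int_a^b(|\rho_{rr}|^2+|m_{rr}|^2)\,dr+C\,(1+\phi(t))(1+g(t))$, with $g(t):=\int_a^b(|\rho_r|^2+|m_r|^2)\,dr$ and some $\phi\in L^1(0,T)$, after which Gronwall's inequality applied to $g$ and a reintegration in $t$ of the resulting differential inequality yield the two bounds in \eqref{2.10a}; note $g(0)\le\|(\rho_0,m_0)\|_{H^1(a,b)}^2$.

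The boundary term $\mathcal B(t)$ is handled by means of the boundary conditions \eqref{eq:bc} and the compatibility relations \eqref{1.6a}--\eqref{1.6b}: at $r=a$ one has $\rho_r=m=0$ (hence $u=m/\rho=0$) for $t>0$, which annihilates the contributions at that endpoint; the contributions at $r=b$, where $\rho=\rhob$ and $m=0$, are absorbed into the dissipation via the one–dimensional trace inequality $|f(b)|^2\le C\big(\|f\|_{L^2(a,b)}\|f_r\|_{L^2(a,b)}+\|f\|_{L^2(a,b)}^2\big)$ together with Young's inequality.

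For the interior term $\mathcal N(t)$, I would integrate by parts once more in the pressure contribution $\int(p_\delta(\rho))_{rr}m_r\,dr$; this replaces the second derivative $p_\delta''(\rho)=\kappa\gamma(\gamma-1)\rho^{\gamma-2}+2\delta$ — singular at $\rho=0$ for $\gamma<2$, and there is no lower bound on $\rho$ at this point — by $p_\delta'(\rho)=\kappa\gamma\rho^{\gamma-1}+2\delta\rho$, whose squared $L^\infty(a,b)$–norm is bounded by a power of $\|\rho(t)\|_{L^\infty(a,b)}$ that is integrable on $[0,T]$ by Lemma \ref{2.1-b}, so it serves as a legitimate coefficient $\phi(t)$ in front of $g(t)$. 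The convective contribution $\int(\frac{m^2}{\rho})_{rr}m_r\,dr$, after one integration by parts, involves only $u,\rho_r,m_r,\rho_{rr},m_{rr}$ and no negative power of $\rho$, and $\|u\|_{L^\infty(Q_T)}<\infty$ by Lemma \ref{lemma:uniform} through \eqref{est:u_max}; the geometric terms carry the harmless factor $r^{-1}\le a^{-1}$; and the remaining space integrals (such as $\int\rho|u_r|^2\,dr$ and $\int h''_\delta(\rho)|\rho_r|^2\,dr$) are integrable in $t$ by Proposition \ref{2.1}. Absorbing every factor $\rho_{rr}$ or $m_{rr}$ into the dissipation with Young's inequality then gives the desired differential inequality.

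The main obstacle is exactly that no pointwise–in–time $L^\infty$ bound on $\rho$ is available before \eqref{2.10a} itself is in hand: the quantities $\|u\|_{L^\infty(Q_T)}$ and the powers of $\|\rho(t)\|_{L^\infty(a,b)}$ appearing in $\mathcal N(t)$ must be fed back consistently — \eqref{2.10a} combined with the embedding $H^1(a,b)\hookrightarrow L^\infty(a,b)$ controls $\|\rho\|_{L^\infty(Q_T)}$, which via \eqref{est:u_max} controls $\|u\|_{L^\infty(Q_T)}$ — and one must track the growth exponents so that this loop closes. Since $\frac{\gamma-1}{2}\le 1$ for $1<\gamma\le 3$, the loop does close (equivalently, the whole estimate can be organized as a continuity argument in $\|\rho\|_{L^\infty(Q_T)}$); this is one of the places where the restriction $\gamma\le 3$ enters. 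The constants above depend on $\gamma$, $\|(\rho_0,u_0)\|_{L^\infty(a,b)}$, $\|(\rho_0,m_0)\|_{H^1(a,b)}$, and — following the convention of this section — on $\e$, $T$, $E_0$.
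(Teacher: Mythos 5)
Your overall architecture is the paper's: an $H^1$-level energy identity, absorption of every $\rho_{rr}$, $m_{rr}$ factor into the $\e$-dissipation via Young's inequality, and a Gronwall inequality for $g(t)=\int_a^b(|\rho_r|^2+|m_r|^2)\,dr$ that is \emph{linear} in $\sup_{s\le t}g(s)$ with an $L^1(0,T)$ coefficient supplied by the dissipation terms of Proposition \ref{2.1} and by Lemma \ref{2.1-b}. (The paper obtains the same identity without differentiating the equations, by testing directly with $\rho_{rr}$ and $m_{rr}$ — which automatically avoids $p_\delta''$ — and in that form all boundary contributions vanish outright from \eqref{eq:bc} and the constancy in $t$ of the boundary data, so no trace inequality is needed.)

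The one step you leave unexecuted is the decisive one, and the tool you name for it is too weak. To keep the Gronwall inequality linear you must bound $\|u\|^4_{L^\infty(Q_\tau)}$ by $C\big(1+\sup_{s\le\tau}g(s)\big)$; via \eqref{est:u_max} this reduces to $\|\rho\|^{2(\gamma-1)}_{L^\infty}\le C(1+g)$. The plain embedding $H^1(a,b)\hookrightarrow L^\infty(a,b)$ gives only $\|\rho\|^{2}_{L^\infty}\le C(1+g)$, which closes the loop only for $\gamma\le2$. What is actually needed is the stronger interpolation \eqref{est:rho_uniform:1}, $\|\rho(t,\cdot)\|^{\max\{4,\gamma+2\}}_{L^\infty}\le C\big(1+\int_a^b|\rho_r|^2dr\big)$, obtained by the same device as \eqref{est:rho_bounded} (combining $\rho_r\in L^2$ with the energy bound, not Sobolev alone); since $2(\gamma-1)\le 4$ exactly when $\gamma\le3$, this is where the restriction enters. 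Your parenthetical alternative — a continuity argument in $\|\rho\|_{L^\infty(Q_T)}$ — does not obviously close either: the Gronwall bound on $g$ is exponential in $T\|u\|^4_{L^\infty}$, hence in a power of $\|\rho\|_{L^\infty(Q_T)}$, and feeding $\|\rho\|_{L^\infty}\lesssim(1+\sup g)^{1/2}$ back into that exponential yields no bootstrap without smallness. A related bookkeeping point: in the term $\int_0^t\|u\|^4_{L^\infty}\int_a^b|\rho_r|^2dr\,d\tau$ the second factor must be retained in the $L^1_t$ form $\frac{1}{2\delta}\int_a^b h''_\delta(\rho)|\rho_r|^2dr$ rather than bounded by $g(\tau)$ again, or the inequality becomes Riccati-type and may blow up before time $T$; this is also why the constant necessarily depends on $\e$ and $\delta$. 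With \eqref{est:rho_uniform:1} supplied and this accounting done, your argument is the paper's.
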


\begin{proof} We multiply the first equation in \eqref{eq:NS} by $\rho_{rr}$ and
the second by $m_{rr}$ to obtain
\begin{eqnarray*}
&&-\partial_t\Big(\frac{|\rho_r|^2+|m_r|^2}{2}\Big)
{}-{}\e\big(|\rho_{rr}|^2{}+{}|m_{rr}|^2\big){}+{}
(\rho_t\rho_r)_r{}+{}(m_tm_r)_r \notag \\
&&=-m_r\rho_{rr}{}-{}\frac{(n-1)}{r}m\rho_{rr}
{}-{}(\rho u^2{}+{}p_\delta)_rm_{rr}{}-{}\frac{n-1}{r}\rho u^2 m_{rr}
  \notag \\
&&\quad+\frac{(n-1)\e}{r}\rho_r\rho_{rr}{}+{}\big(\frac{(n-1)\e}{r} m\big)_r m_{rr}.
 \end{eqnarray*}
We integrate this over $Q_t$ to obtain
\begin{eqnarray}
&&\int_a^b \Big(\frac{|\rho_r|^2+|m_r|^2}{2}\Big)\Big|^t_0\,dr
 {}+{}\e \int_{Q_t} (|\rho_{rr}|^2{}+{}m_{rr}|^2)\,drdt \notag \\
&&{}={}
 \int_{Q_t}\big(m_r\rho_{rr}{}+{}\frac{n-1}{r}m\rho_{rr}\big)\,drdt
 {}+{}\int_{Q_t}(\rho u^2{}+{}p_\delta)_rm_{rr}\,drdt \notag \\
 &&\quad {}+{}(n-1)\int_{Q_t}\big(\frac{\rho u^2}{r}m_{rr}
    - \frac{\e}{r}\rho_r\rho_{rr}\big)\,drdt
 {}-{}(n-1)\e\int_{Q_t}\big(\frac{m}{r}\big)_r m_{rr}\,drdt.\qquad\quad \label{est:higher-0}
 \end{eqnarray}

We now estimate the term $\int_{Q_T}(\rho u^2+p)_rm_{rr}\,drdt$ first. Consider
\begin{eqnarray}
&&\left| \int_{Q_t} p'_\delta (\rho)\rho_rm_{rr}\,drd\tau\right|\notag \\
&&\leq \Delta \int_{Q_t}|m_{rr}|^2\,drd\tau{}
+{}C_\Delta\int_{Q_t} \big(2\delta \rho+\kappa\gamma\rho^{\gamma-1}\big)^2 |\rho_r|^2\,drd\tau \notag \\
&&\leq \Delta \int_{Q_t}|m_{rr}|^2\,drd\tau{}
+{}C_\Delta\int_0^t\Big(\big(1+\|\rho(\tau,\cdot)\|_{L^\infty}^{2\gamma}\big)\int_a^b |\rho_r|^2\,dr\Big)d\tau,
\label{est:higher-0.1}
\end{eqnarray}
where $\Delta>0$ will be chosen later.

Consider $(\rho u^2)_rm_{rr}{}=u^2\rho_r m_r+ 2\rho u u_rm_{rr}$. We estimate
\begin{eqnarray*}
&&\int_{Q_t}|u^2\rho_r m_{rr}|\,drd\tau \notag\\
&&\leq \Delta\int_{Q_t} |m_{rr}|^2\,drd\tau
{}+{}
C_\Delta \int_0^t\Big( \|u(\tau,\cdot)\|^4_{L^\infty}\int_a^b |\rho_r(\tau,r)|^2\,dr\Big) d\tau \\
&&\leq \Delta\int_{Q_t} |m_{rr}|^2\,drd\tau +
C_\Delta \int_0^t\Big(\|u(\tau,\cdot)\|_{L^\infty}^4
\int_a^b h''_\delta(\rho)|\rho_r(\tau,r)|^2\,dr\Big) d\tau.
\end{eqnarray*}

Using the uniform estimates \eqref{est:u_max}, we obtain
\begin{equation}
\label{est:u-uniform-1}
\|u(\tau,\cdot)\|_{L^\infty(a,b)}^4{}\leq{} \|u\|_{L^\infty(Q_\tau)}^4
{}\leq{} C(\rhob,a,\|(\rho_0,u_0)\|_{L^\infty(a,b)})
\big(1{}+{}\| \rho\|_{L^\infty(Q_\tau)}^{2\max\{1, \gamma-1 \} }\big).
\end{equation}
Inserting this into the above inequality, we have
\begin{eqnarray*}
&&\int_{Q_t}|u^2\rho_rm_{rr}|\,drd\tau \notag\\
&&\leq{} \Delta\int_{Q_t} |m_{rr}|^2\,drd\tau
{}+{}
C_\Delta \int_0^t\Big((1+ \sup_{s\in[0,\tau]}\|\rho(s,\cdot)\|_{L^\infty}^{2\max\{1, \gamma-1\}})
\int_a^b h''_\delta(\rho)|\rho_r(\tau,r)|^2\,dr\Big) d\tau.
\end{eqnarray*}

On the other hand, using the estimate similar to \eqref{est:rho_bounded},  we can write
\begin{eqnarray}
\label{est:rho_uniform:1}
\|\rho(t,\cdot)\|_{L^\infty}^{\max\{4,\gamma+2\}}{}\leq C\Big(1+ \int_a^b |\rho_r(t,\cdot)|^2\,dr\Big)
\qquad\mbox{for $t\in[0,T]$}.
\end{eqnarray}

Using \eqref{est:rho_uniform:1} and $\gamma\in(1,3]$, we obtain
\begin{eqnarray}
&&\int_{Q_t}|u^2\rho_rm_{rr}|\,drd\tau \notag\\
&&\leq \Delta\int_{Q_t} |m_{rr}|^2\,drd\tau\notag \\
&&\quad +
C_\Delta \int_0^t\Big(\big(1+ \sup_{s\in[0,\tau]}\int_a^b |\rho_r(s,r)|^2 dr\big)
\int_a^b h''_\delta(\rho)|\rho_r(\tau,r)|^2\,dr\Big) d\tau.
\quad\label{est:high-2}
\end{eqnarray}

Furthermore, we have
\begin{eqnarray}
\int_{Q_t}|\rho uu_rm_{rr}|\,drd\tau &\leq& \Delta\int_{Q_t} |m_{rr}|^2\,drd\tau\notag \\
&&+ C_\Delta\int_0^t\Big(\|(\rho u^2)(\tau,\cdot)\|_{L^\infty}\int_a^b\rho(\tau,r)|u_r(\tau,r)|^2\,dr\Big) d\tau.
\label{2.14b}
\end{eqnarray}
Arguing as in  \eqref{est:u-uniform-1} and \eqref{est:rho_uniform:1}, we obtain
\begin{eqnarray}
\|(\rho u^2)(\tau,\cdot)\|_{L^\infty}
&\leq& C\Big(1+\sup_{s\in[0,\tau]}\|\rho(s,\cdot)\|_{L^\infty}^{\max\{2,\gamma\}}\Big) \notag \\
&\leq& C\Big(1+
\sup_{s\in[0,\tau]}\int_a^b |\rho_r(s,r)|^2\,dr\Big).
\label{est:rho_u^2}
\end{eqnarray}
Inserting this into \eqref{2.14b}, we obtain
\begin{eqnarray}
&&\int_{Q_t}|\rho uu_rm_{rr}|\,drd\tau \notag\\
&&\leq{} \Delta\int_{Q_t} |m_{rr}|^2\,drd\tau \notag\\
&&\quad
+C_\Delta \int_0^t\Big(\big(1+ \sup_{s\in[0,\tau]}\int_a^b|\rho_r(s,r)|^2 dr\big)
\int_a^b \rho(\tau,r)|u_r(\tau,r)|^2\,dr\Big) d\tau. \label{est:high-3}
\end{eqnarray}

Combining \eqref{est:higher-0.1}, \eqref{est:high-2}, and \eqref{est:high-3}, we obtain
\begin{eqnarray}
\left| \int_{Q_t} (\rho u^2 {}+{}p)_rm_{rr}\,drd\tau\right| &\leq&
\Delta\int_{Q_t} |m_{rr}|^2\,drd\tau \notag\\
&&+C_\Delta \int_0^t\Phi_1(\tau)\Big(1+ \sup_{s\in[0,\tau]}\int_a^b |\rho_r(s,r)|^2 dr\Big) d\tau,
\label{2.17} \notag
\end{eqnarray}
where
\[
\Phi_1(\tau){}={}\int_a^b
\Big(h''_\delta(\rho)|\rho_r(\tau,r)|^2{}+{}\rho(\tau,r)|u_r(\tau,r)|^2\Big)\,dr
\]
is an $L^1(0,T)$--function with the norm depending on $a,\e$, and $E_0$;
see \eqref{Energy_est} and \eqref{est:rho_int_2_gamma}.

Consider now
\begin{eqnarray*}
\left|\int_{Q_t}\frac{2\rho u^2}{r}m_{rr}drd\tau\right| &\leq& \Delta \int_{Q_t}|m_{rr}|^2\,drd\tau
{}+{}C_\Delta\int_0^t\Big(\|(\rho u^2)(\tau,\cdot)\|_{L^\infty}\int_a^b (\rho u^2)(\tau,r)\,dr\Big)d\tau\\
&\leq& \Delta \int_{Q_t}|m_{rr}|^2\,drd\tau
{}+{}C_\Delta\int_0^t \Big(1+ \sup_{s\in[0,\tau]}\int_a^b|\rho_r(s,r)|^2\,dr\Big)d\tau,
\end{eqnarray*}
where, in the last inequality, we have used \eqref{Energy_est} and \eqref{est:rho_u^2}.
All the other terms in \eqref{est:higher-0} can be estimated by similar arguments.
Thus, we obtain
\begin{eqnarray*}
&&\sup_{\tau\in[0,t]}\int_a^b \big(|\rho_r(\tau,s)|^2{}+{}|m_r(\tau,s)|^2\big)\,dr{}
+{}\e\int_{Q_t}\big(|\rho_{rr}|^2{}+{}|m_{rr}|^2\big)\,drd\tau \\
&&\leq \Delta\int_{Q_t}\big(|\rho_{rr}|^2{}+{}|m_{rr}|^2\big)\,drd\tau \\
&&\quad {}+{}
C_\Delta\int_0^t\big(1+\Phi(\tau)\big)
\Big(1+\sup_{s\in[0,\tau]}\int_a^b\big(|\rho_r(s,r)|^2{}+{}|m_r(s,r)|^2\big)\,dr\Big)\,d\tau,
\end{eqnarray*}
where $\Phi(\tau)=\Phi_1(\tau)+ \|\rho(\tau,\cdot)\|_{L^\infty}^{2\max\{2, \gamma\}}$.

Choosing $\Delta$ small enough and using the Gronwall-type argument and Lemma \ref{2.1-b},
we complete the proof.
\end{proof}

As a corollary, we can first bound $\|\rho\|_{L^\infty(Q_T)}$,
which follows directly from \eqref{2.10a} and \eqref{est:rho_uniform:1}, and
then bound $\|u\|_{L^\infty(Q_T)}$ from Lemma \ref{lemma:uniform}.

\begin{lemma}
\label{lemma:2.4}
There exists an a priori bound for $\|(\rho,u)\|_{L^\infty(Q_T)}$
in terms of the parameters $T,E_0, \|(\rho_0,u_0)\|_{L^\infty(a,b)}$, and
$\|(\rho_0,u_0)\|_{H^1(a,b)}$.
\end{lemma}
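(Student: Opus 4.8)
The plan is to obtain this bound as an essentially immediate corollary of Lemmas \ref{lemma:uniform} and \ref{lemma:high-derivative-1} together with the pointwise interpolation inequality \eqref{est:rho_uniform:1}, with no new estimates required. First I would invoke Lemma \ref{lemma:high-derivative-1} to get the uniform-in-$t$ control $\sup_{t\in[0,T]}\int_a^b|\rho_r(t,r)|^2\,dr\leq C$; here one has to replace the norm $\|(\rho_0,m_0)\|_{H^1(a,b)}$ appearing in \eqref{2.10a} by $\|(\rho_0,u_0)\|_{L^\infty(a,b)}$ and $\|(\rho_0,u_0)\|_{H^1(a,b)}$ using $m_0=\rho_0 u_0$ and the product rule, which is routine. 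Plugging this into \eqref{est:rho_uniform:1} immediately yields $\|\rho\|_{L^\infty(Q_T)}\leq C$, with $C$ depending only on the quantities in the statement (and on the parameters $\e,\delta,\rhob,a,b,\gamma$, per the convention fixed above).

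Next I would bound $u$ by feeding the $\rho$ bound into estimate \eqref{est:u_max} of Lemma \ref{lemma:uniform}. For this one checks that, since $p'_\delta(s)=\kappa\gamma s^{\gamma-1}+2\delta s$ with $\gamma\in(1,3]$, the integral defining $R(\rho)$ in \eqref{Riemann.invariants} converges near $s=0$ and obeys the polynomial growth bound $|R(\rho)|\leq C\big(1+\rho^{\frac{1}{2}\max\{1,\gamma-1\}}\big)$; hence $\|R(\rho)\|_{L^\infty(Q_T)}$ is controlled by $\|\rho\|_{L^\infty(Q_T)}$, while $\|u_0\pm R(\rho_0)\|_{L^\infty(a,b)}$ is controlled by $\|(\rho_0,u_0)\|_{L^\infty(a,b)}$. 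Then \eqref{est:u_max} delivers the bound on $\|u\|_{L^\infty(Q_T)}$ in terms of the stated data.

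Since all the genuinely delicate work has already been done --- the energy identity of Proposition \ref{2.1}, the maximum-principle argument for the Riemann invariants in Lemma \ref{lemma:uniform}, and the Gronwall closure of the higher-order estimates in Lemma \ref{lemma:high-derivative-1} --- I do not expect a real obstacle here. The only point requiring a little care is the bookkeeping that verifies no circular dependence is introduced: Lemma \ref{lemma:high-derivative-1} must (and does) already deliver the $H^1$ bound on $\rho_r$ without presupposing an $L^\infty(Q_T)$ bound on $u$, since its proof first estimates $\|u\|_{L^\infty(Q_\tau)}$ by a power of $\|\rho\|_{L^\infty(Q_\tau)}$ and then by $\int_a^b|\rho_r|^2\,dr$ before applying Gronwall. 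Modulo this and the norm conversion $m_0=\rho_0u_0$ noted above, the proof is a one-line chaining of the three results.
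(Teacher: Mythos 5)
Your proposal is correct and follows exactly the route the paper takes: the paper's own (one-sentence) proof is precisely to combine \eqref{2.10a} with \eqref{est:rho_uniform:1} to bound $\|\rho\|_{L^\infty(Q_T)}$ and then to invoke Lemma \ref{lemma:uniform} for $\|u\|_{L^\infty(Q_T)}$. The extra details you supply (the growth bound on $R(\rho)$, the conversion $m_0=\rho_0u_0$, and the non-circularity check) are all sound and merely flesh out what the paper leaves implicit.
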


Define
\[
 \phi(\rho){}={}\left\{
\begin{array}{ll}
\frac{1}{\rho}-\frac{1}{\rhot} {}+{}\frac{\rho-\rhot}{\rhot^2},& \rho<\rhot,\\[2mm]
0, & \rho>\rhot.
\end{array}
 \right.
\]

\begin{lemma}
There exists $C>0$ depending on $\|\phi(\rho_0)\|_{L^1(a,b)}$
and the other parameters of the problem
such that
\begin{eqnarray}
\label{est:rho_r_integral}
\sup_{t\in[0,T]}\int_a^b\phi(\rho(t,\cdot))\,dr
{}+{}\int_{Q_T}\frac{|\rho_r|^2}{\rho^3}\,drdt{}\leq{} C.
\end{eqnarray}
\end{lemma}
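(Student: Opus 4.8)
The plan is to derive a differential inequality for the quantity $\int_a^b\phi(\rho(t,\cdot))\,dr$ by testing the continuity equation against $\phi'(\rho)$. Since $\phi$ vanishes together with $\phi'$ for $\rho>\rhot$, all the work happens on the set where $\rho<\rhot$, where $\phi(\rho)\sim\rho^{-1}$ and $\phi'(\rho)=-\rho^{-2}+\rhot^{-2}<0$, $\phi''(\rho)=2\rho^{-3}>0$. Multiplying the first equation in \eqref{eq:NS} by $\phi'(\rho)r^{n-1}$ and integrating over $Q_t$ gives, after using the boundary condition $\rho_r|_{r=a}=0$ and $\rho|_{r=b}=\rhob$ (noting $\phi(\rhob)=0$ since $\rhob<\rhot$ whenever $\rhob<\frac32\rhot$, which holds by $\rhob\in(0,1)$ and the choice $\rhot=\rhob$ — indeed $\phi$ is built around $\rhot=\rhob$ so it vanishes identically near the outer boundary and near equilibrium), an identity of the schematic form
\[
\int_a^b \phi(\rho)r^{n-1}dr\Big|_0^t + \e\int_{Q_t}\phi''(\rho)|\rho_r|^2 r^{n-1}drdt = -\int_{Q_t}\phi'(\rho)\Big(m_r+\tfrac{n-1}{r}m\Big)r^{n-1}drdt + (\text{boundary-derivative terms}).
\]
Because $\phi''(\rho)=2\rho^{-3}$ on $\{\rho<\rhot\}$, the viscous term on the left controls $\e\int_{Q_t}\rho^{-3}|\rho_r|^2 r^{n-1}drdt$ up to the set $\{\rho\ge\rhot\}$, which by Proposition~\ref{2.1} has bounded measure and where $\phi\equiv0$ anyway.

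**Handling the right-hand side.** The term $-\int_{Q_t}\phi'(\rho)\,m_r\,r^{n-1}$ is the delicate one: write $m=\rho u$, so $m_r=\rho u_r+u\rho_r$, and split accordingly. The piece $\int_{Q_t}\phi'(\rho)\rho\, u_r\,r^{n-1}$ is bounded since $|\phi'(\rho)\rho|=|\rho^{-1}-\rho\rhot^{-2}|\le\rho^{-1}$ on $\{\rho<\rhot\}$; one uses Cauchy–Schwarz against the energy dissipation $\e\int\rho|u_r|^2$ from \eqref{Energy_est} after inserting a factor $\rho^{-1/2}\cdot\rho^{1/2}$, which produces $\e\int\rho^{-2}u_r^2\cdots$ — this must instead be absorbed into $\frac{\e}{2}\int\phi''(\rho)|\rho_r|^2$ via the continuity equation by substituting $\e\rho_{rr}$-type expressions, or more directly one writes $\phi'(\rho)\rho u_r = \partial_r(\text{something})-(\cdots)$ and integrates by parts in $r$. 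The piece with $u\rho_r$ pairs with the good term: $|\phi'(\rho)\,u\,\rho_r|\le \|u\|_{L^\infty}\rho^{-2}|\rho_r|\le \Delta\,\rho^{-3}|\rho_r|^2 + C_\Delta\|u\|_{L^\infty}^2\rho^{-1}$, the first absorbed into the viscous term (with the $\e$ factor tracked carefully — actually after dividing through one treats this as a lower-order term controlled by $\|u\|_{L^\infty(Q_T)}^2$, which is already bounded by Lemma~\ref{lemma:2.4}) and the second bounded by $\|u\|_{L^\infty}^2\int_a^b\phi(\rho)\,dr$ plus a constant (using $\rho^{-1}\le\phi(\rho)+C$ on $\{\rho<\rhot\}$). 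The geometric term $-\int\phi'(\rho)\frac{n-1}{r}m\,r^{n-1}=-(n-1)\int\phi'(\rho)\rho u\, r^{n-2}$ is bounded on $(a,b)$ using $|\phi'(\rho)\rho|\le\rho^{-1}$, $r\ge a$, the measure bound on $\{\rho\ge\frac32\rhot\}$ (which, combined with $\int\rho u^2$ bounded, controls $\int|u|$), and $\|u\|_{L^\infty}$ — again producing a term $\le C\int_a^b\phi(\rho)\,dr + C$.

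**Closing the estimate.** The boundary-derivative terms $\e\frac{n-1}{r}\rho_r$ contributions combine with the $\e r^{-(n-1)}(r^{n-1}\rho_r)_r$ structure to give exactly $\e(r^{n-1}\rho_r)_r\phi'(\rho)$, whose integral is $-\e\int\phi''(\rho)|\rho_r|^2 r^{n-1}$ plus the vanishing boundary contribution — so no stray viscous boundary term survives. Assembling everything, one arrives at
\[
\frac{d}{dt}\int_a^b\phi(\rho)\,r^{n-1}dr + \frac{\e}{2}\int_a^b\rho^{-3}|\rho_r|^2\,r^{n-1}dr \le C\Big(1+\|u(t,\cdot)\|_{L^\infty}^2\Big)\Big(1+\int_a^b\phi(\rho)\,r^{n-1}dr\Big),
\]
where $C$ depends on $a,\e,\gamma,E_0$ and the already-established bound for $\|u\|_{L^\infty(Q_T)}$; then Grönwall's inequality, together with $t\mapsto\|u(t,\cdot)\|_{L^\infty}^2\in L^1(0,T)$ and the hypothesis $\phi(\rho_0)\in L^1(a,b)$, closes the argument, yielding \eqref{est:rho_r_integral} (after comparing $\int\phi\,r^{n-1}dr$ with $\int\phi\,dr$ using $a\le r\le b$). \textbf{The main obstacle} is the $\rho u_r$ term on the right-hand side: the natural Cauchy–Schwarz split against the energy dissipation $\e\int\rho|u_r|^2$ leaves a weight $\rho^{-2}$ rather than $\rho^{-3}$, so one needs either an integration by parts in $r$ trading $u_r$ for $u_{rr}$ (bad) or, better, to use the continuity equation to rewrite $\rho u_r$-contributions in divergence form and transfer a derivative onto $\phi'(\rho)$, generating precisely the $\phi''(\rho)|\rho_r|^2$ weight one needs — getting the bookkeeping of the $\e$-powers and the borderline weight exactly right is where the care is required.
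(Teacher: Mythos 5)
Your argument is correct and is essentially the paper's own proof: the paper likewise tests the continuity equation with $\phi'(\rho)$, obtains the dissipation term $\e\,\phi''(\rho)|\rho_r|^2=2\e|\rho_r|^2/\rho^3$ on the good side, handles the dangerous $u_r$-term by an integration by parts in $r$ (yielding $\rho_r u/\rho^2$, then Cauchy--Schwarz against the dissipation together with $\rho^{-1}\le\phi(\rho)+C$ and the $L^\infty$ bound on $u$ from Lemma \ref{lemma:2.4}), and closes with a Gronwall argument. The only cosmetic difference is that the paper works without the weight $r^{n-1}$ and writes $\phi'(\rho)m_r=(u\phi)_r-2(\rho^{-1}-\rhob^{-1})u_r\chi_{\{\rho<\rhob\}}$ before integrating by parts, rather than splitting $m_r=\rho u_r+u\rho_r$ as you do.
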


\begin{proof}
Indeed, multiplying the first equation in \eqref{eq:NS} by $\phi'(\rho)$, we have
\begin{eqnarray*}
&&\phi_t{}+{}(u\phi)_r{}-{}\e \, \phi_{rr}{}+{}(n-1)\e \frac{|\rho_r|^2}{\rho^3}\chi_{\{\rho<\rhot\}}\\
&&={}2\big( \frac{1}{\rho}-\frac{1}{\rhot}\big)u_r\chi_{\{\rho<\rhot\}}{}+{}
\frac{n-1}{r}\rho u\big(\frac{1}{\rho^2}-\frac{1}{\rhot^2}\big)\chi_{\{\rho<\rhot\}}
+\frac{(n-1)\e}{r}\big(\frac{1}{\rho^2}-\frac{1}{\rhot^2}\big)\rho_r\chi_{\{\rho<\rhot\}}.
\end{eqnarray*}
Integrating the above equation in $(t,r)$ and using the boundary conditions \eqref{eq:bc},
we have
\begin{eqnarray}
&&\sup_{t\in[0,T]}\int_a^b\phi(\rho)\,dr
{}+{}\e (n-1)\int_{Q_T\cap\{\rho<\rhot\}}\frac{|\rho_r|^2}{\rho^3}\,drdt \notag \\
{}&&\leq{} \left|\int_{Q_T\cap\{\rho<\rhot\}}2\big(\frac{1}{\rho}-\frac{1}{\rhot}\big)u_r\,drdt \right|
{}+{}\left|\int_{Q_T\cap \{\rho<\rhot\}}\frac{n-1}{r}\rho u\left(\frac{1}{\rho^2}-\frac{1}{\rhot^2}\right)\,drdt \right| \notag \\[1mm]
{}&&\quad +{}\left|\int_{Q_T\cap\{\rho<\rhot\}} \frac{(n-1)\e}{r}\rho_r\left(\frac{1}{\rho^2}-\frac{1}{\rhot^2}\right)\,drdt \right| \notag \\
\label{est_00}
{}&&={}I_1{}+{}I_2{}+{}I_3.
\end{eqnarray}
Integrating by parts, we have
\begin{eqnarray}
 I_1{}\leq{}2\int_{Q_T\cap\{\rho<\rhot\}} \left|\frac{\rho_r u}{\rho^2}\right|
 {}\leq{}\frac{\e}{8}\int_{Q_T\cap \{\rho<\rhot\}}\frac{|\rho_r|^2}{\rho^3}\,drdt
 {}+{}C_\e\int_{Q_T\cap \{\rho<\rhot\}}\frac{|u|^2}{\rho}\,drdt. \notag
 \end{eqnarray}
Since $\rho^{-1}\leq\phi(\rho)$ for small $\rho$, $u$ is bounded in $L^\infty$,
and $|\{\rho(t,\cdot)\leq\rhot\}|$ is bounded independently of $T$,
then the last term in the above inequality is bounded by
\[
C\Big(1{}+{}\int_{Q_T}\phi(\rho)\,drdt\Big).
\]
Thus, we have
\begin{eqnarray}
I_1&\leq&2\int_{Q_T\cap\{\rho<\rhot\}} \Big|\frac{\rho_r u}{\rho^2}\Big|\,drdt\notag\\
&\leq& \Delta\int_{Q_T\cap \{\rho<\rhot\}}\frac{|\rho_r|^2}{\rho^3}\,drdt
{}+{} C_\Delta\left(1+\int_{Q_T}\phi(\rho)\,drdt\right).
\end{eqnarray}
Also, by the similar arguments,
\begin{eqnarray}
 I_2{}={}\left|\int_{Q_T\cap\{\rho<\rhot\}}\frac{n-1}{r}\left(\frac{\rho u}{\rhot^2}
       {}-{}\frac{u}{\rho}\right)\,drdt{}\right|
       \leq{} C\left(1{}+{}\int_{Q_T}\phi(\rho)\,drdt\right),
\end{eqnarray}
and
\begin{eqnarray}
 I_3&\leq & C\int_{Q_T\cap \{\rho<\rhot\}}\left| \frac{\e\rho_r}{\rho^2} \right|\,drdt\notag\\
 &\leq&\Delta\int_{Q_T\cap \{\rho<\rhot\}}\frac{|\rho_r|^2}{\rho^3}\,drdt
 + C_\Delta\left(1+\int_{Q_T}\phi(\rho)\,drdt\right).
\end{eqnarray}
Combining the last three estimates in \eqref{est_00}, choosing $\Delta>0$ sufficiently small,
and using the Gronwall-type inequality, we obtain
the {\it a priori} estimate we need.
\end{proof}

Then we have the following estimate:
\begin{eqnarray}
 \label{est:v_int}
\int_0^T\Big\|\frac{1}{\rho(t,\cdot)}\Big\|_{L^\infty(a,b)}\,dt
{}&\leq&{}C\left(1{}+{}\big(\int_{Q_T}\frac{|\rho_r|^2}{\rho^3}\,drdt\big)^{\frac{1}{2}}
\big(\int_{Q_T}\phi(\rho)\,drdt\big)^{1/2}\right) \notag \\
{}&\leq&{}C \left(1{}+{}\big(\int_{Q_T}\frac{|\rho_r|^2}{\rho^3}\,drdt\big)^{\frac{1}{2}}\right).
\end{eqnarray}

\begin{lemma}
There exists $C$ depending on $\|\phi(\rho_0)\|_{L^1(a,b)}$ and the other parameters as in Lemma {\rm 2.4}
such that
\begin{equation}
\label{est:u_r_max}
 \int_0^T\Big\|(\frac{m_r}{\rho}, \frac{\rho_r}{\rho}, u_r)(t,\cdot)\Big\|_{L^\infty(a,b)}\,dt{}\leq{}C,
\end{equation}
and
\begin{equation}\label{rho-lower-bound}
C^{-1}\le \rho(t,r)\le C.
\end{equation}
\end{lemma}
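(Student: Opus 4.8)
The plan is to establish the time-integrated $L^\infty$ bound \eqref{est:u_r_max} first, using only the estimates already obtained and \emph{without} invoking any lower bound on $\rho$, and then to deduce the two-sided bound \eqref{rho-lower-bound} from it. I expect the only genuine obstacle to be the apparent circularity: bounding $u_r$ or $\frac{m_r}{\rho}$ pointwise in $r$ seems to require $\rho$ bounded below, whereas the lower bound on $\rho$ needs $\int_0^T\|u_r\|_{L^\infty}\,dt$. The way around it is to observe that the \emph{time-integrable} reciprocal bound of the type \eqref{est:v_int}, used in its pointwise-in-$t$ form and combined with the $H^1$-in-space, $L^2$-in-time second-derivative estimates of Lemma \ref{lemma:high-derivative-1}, already suffices to close \eqref{est:u_r_max}; the pointwise lower bound on $\rho$ is then a clean consequence.

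For \eqref{est:u_r_max} I would write $u_r=\frac{m_r}{\rho}-u\,\frac{\rho_r}{\rho}$; since $\|u\|_{L^\infty(Q_T)}\le C$ by Lemma \ref{lemma:2.4}, it is enough to bound $\int_0^T\big\|\frac{m_r}{\rho}(t,\cdot)\big\|_{L^\infty}dt$ and $\int_0^T\big\|\frac{\rho_r}{\rho}(t,\cdot)\big\|_{L^\infty}dt$, which then also yield $u_r$. For the numerators I would use the one-dimensional interpolation inequality $\|f\|_{L^\infty(a,b)}^2\le\frac{1}{b-a}\|f\|_{L^2(a,b)}^2+2\|f\|_{L^2(a,b)}\|f_r\|_{L^2(a,b)}$ with $f=\rho_r$ and $f=m_r$, together with \eqref{2.10a}, to get for a.e. $t$
\[
\|\rho_r(t,\cdot)\|_{L^\infty(a,b)}+\|m_r(t,\cdot)\|_{L^\infty(a,b)}\le C\,g(t),\qquad g(t):=1+\|\rho_{rr}(t,\cdot)\|_{L^2}^{1/2}+\|m_{rr}(t,\cdot)\|_{L^2}^{1/2},
\]
with $g\in L^2(0,T)$ by \eqref{2.10a}. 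For the denominator, arguing as in the derivation of \eqref{est:v_int} (pick $r_0$ with $\rho(t,r_0)$ bounded below and $|r-r_0|\le C$, use $\sup_t\int_a^b\phi(\rho)\,dr\le C$ and Cauchy--Schwarz) I would obtain the pointwise-in-time bound
\[
\Big\|\tfrac1{\rho}(t,\cdot)\Big\|_{L^\infty(a,b)}\le C\big(1+\Psi(t)^{1/2}\big),\qquad \Psi(t):=\int_a^b\frac{|\rho_r|^2}{\rho^3}(t,\cdot)\,dr\in L^1(0,T).
\]
Multiplying and integrating in $t$ gives
\[
\int_0^T\Big(\Big\|\tfrac{\rho_r}{\rho}(t,\cdot)\Big\|_{L^\infty}+\Big\|\tfrac{m_r}{\rho}(t,\cdot)\Big\|_{L^\infty}\Big)dt\le C\int_0^T\big(1+\Psi(t)^{1/2}\big)g(t)\,dt,
\]
and the right-hand side is finite since $\int_0^T g\le T^{1/2}\|g\|_{L^2(0,T)}$ and $\int_0^T\Psi^{1/2}g\le\frac12\|\Psi\|_{L^1(0,T)}+\frac12\|g\|_{L^2(0,T)}^2$. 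This proves \eqref{est:u_r_max}.

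For \eqref{rho-lower-bound}, the upper bound $\rho\le C$ is already Lemma \ref{lemma:2.4}. For the lower bound I would set $v:=1/\rho>0$ and, substituting $\rho=1/v$ and $m=\rho u$ into the first equation of \eqref{eq:NS}, obtain
\[
v_t+u\,v_r-\e v_{rr}-\e\tfrac{n-1}{r}v_r=v\,u_r+\tfrac{n-1}{r}uv-\frac{2\e\,|v_r|^2}{v}.
\]
The last term is nonpositive, and \eqref{eq:bc} gives $v_r|_{r=a}=0$ and $v|_{r=b}=1/\rhob$. Hence, at a point where $v(t,\cdot)$ attains its maximum over $[a,b]$, either that maximum equals $1/\rhob$, or $v_r=0$ and $v_{rr}\le0$ there; in either case the classical maximum principle yields, for a.e. $t$,
\[
\frac{d}{dt}\|v(t,\cdot)\|_{L^\infty(a,b)}\le\Big(\|u_r(t,\cdot)\|_{L^\infty(a,b)}+\tfrac{n-1}{a}\|u\|_{L^\infty(Q_T)}\Big)\|v(t,\cdot)\|_{L^\infty(a,b)}.
\]
Since $\|v(0,\cdot)\|_{L^\infty(a,b)}=\|1/\rho_0\|_{L^\infty(a,b)}<\infty$ (as $\inf_{[a,b]}\rho_0>0$), and $\int_0^T\|u_r(t,\cdot)\|_{L^\infty}dt\le C$ by the previous step while $\int_0^T\|u\|_{L^\infty(Q_T)}dt\le CT$, Gronwall's inequality gives $\|1/\rho\|_{L^\infty(Q_T)}\le C$, i.e., $\rho(t,r)\ge C^{-1}$. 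The step I expect to require the most care is the closing of \eqref{est:u_r_max} above, precisely because it must be done without any use of a lower bound on $\rho$; the remaining passages are routine once that is in place.
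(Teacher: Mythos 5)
Your proposal is correct and follows essentially the same route as the paper: you bound $\int_0^T\|\frac{m_r}{\rho}\|_{L^\infty}\,dt$ (and the analogous quantities) by combining the pointwise-in-time version of \eqref{est:v_int} with the $H^2$-in-space, $L^2$-in-time control from \eqref{2.10a}, and then obtain the lower bound on $\rho$ from the parabolic inequality for $v=1/\rho$ via the maximum principle and Gronwall, exactly as in the paper. The only cosmetic differences are your use of the Agmon interpolation inequality where the paper invokes the Sobolev embedding, and your explicit derivation of the $v$-equation including the discarded nonpositive term $-2\e|v_r|^2/v$.
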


\begin{proof}
Indeed, by the Sobolev embedding and \eqref{est:v_int}, we have
\begin{eqnarray*}
\int_0^T\Big\|\frac{m_r(t,\cdot)}{\rho(t,\cdot)}\Big\|_{L^\infty(a,b)}\,dt
{}&\leq&{}\int_0^T\|m_r(t,\cdot)\|_{L^\infty(a,b)}\|\rho^{-1}(t,\cdot)\|_{L^\infty(a,b)}\,dt \notag \\
& \leq &  C  \int_0^T  \Big(\int_a^b|m_{rr}|^2\,dr\Big)^{\frac{1}{2}}
\Big(1{}+{}\big(\int_a^b\frac{|\rho_r|^2}{\rho^3} \,dr\big)^{\frac{1}{2}}\Big)\,dt,
\end{eqnarray*}
which is bounded by \eqref{2.10a} and \eqref{est:rho_r_integral}.
The estimate for $\frac{\rho_r}{\rho}$ is the same. The estimate for $u_r$ follows
from $u_r{}={}\frac{m_r}{\rho}{}-{}\frac{u\rho_r}{\rho}$, the estimates above, and Lemma \ref{lemma:2.4}.

Now we can obtain a uniform estimate for $v{}={}\frac{1}{\rho}$. Notice that $v$ verifies the inequality:
\begin{equation*}
 v_t{}+{}\big(u-\frac{\e(n-1)}{r}\big)v_r{}-{}\e v_{rr}{}\leq{} \big(u_r{}+{}\frac{(n-1)u}{r}\big)v.
\end{equation*}
By the maximum principle, we have
\begin{equation}
 \label{est:v_max}
\max_{Q_T}v
{}\leq{}C\max\{\|v_0\|_{L^\infty(a,b)},\bar{v}\} e^{C\int_0^T\|(u_r, u)(\tau,\cdot)\|_{L^\infty(a,b)}\,d\tau}
{}\leq{}C\max\{\|v_0\|_{L^\infty(a,b)},\bar{v}\},
\end{equation}
by Lemma \ref{lemma:2.4} and \eqref{est:u_r_max}.
\end{proof}

\smallskip
The estimates in Lemma \ref{lemma:2.4} and \eqref{est:v_max}
are the required {\it a priori} estimates.
The proof of Theorem \ref{theorem:exist} is completed.
\end{subsection}
\end{section}

\medskip
\begin{section}{Proof of Theorem \ref{main}}

In this section, we provide a complete proof of Theorem \ref{main}.
As indicated earlier,
the constant $M$ is a universal constant, {\it independent of $\e>0$},
from now on.

\begin{subsection}{A Priori Estimates Independent of $\e$}

We will need the following estimate.
\begin{lemma}
\label{claim:local_integrability}
Let  $l=0,\cdots, n-1$, and $a_1\in(a,1]$.  There exists $M{}={}M(\gamma,a_1,E_0)$
such that,
for any $T>0$,
\begin{equation}
\sup_{t\in[0,T]}\int_{a_1}^b \rho(t,\cd)^\gamma\,r^ldr
{}\leq M\big(1{}+{}\bar{\rho}^\gamma b^n\big).
\end{equation}
\end{lemma}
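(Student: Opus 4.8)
\emph{Proof proposal.} The plan is to reduce the weighted integral to the natural weight $r^{n-1}$ and then read the bound off the uniform energy estimate \eqref{Energy_est}. First I would observe that, since $0\le l\le n-1$ and $a_1\le 1$, for every $r\in(a_1,b)$ one has $r^l=r^{n-1}r^{-(n-1-l)}\le a_1^{-(n-1-l)}r^{n-1}\le a_1^{-(n-1)}r^{n-1}$, whence
\[
\int_{a_1}^b \rho(t,\cd)^\gamma\,r^l\,dr\;\le\;a_1^{-(n-1)}\int_a^b \rho(t,\cd)^\gamma\,r^{n-1}\,dr .
\]
So it suffices to bound $\int_a^b\rho(t,\cd)^\gamma r^{n-1}\,dr$, uniformly in $t\in[0,T]$, by a constant times $1+\rhob^\gamma b^n$.

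For the latter I would use the energy estimate \eqref{Energy_est} together with the lower bound \eqref{Energy_est-a}, which give, for every $t\in[0,T]$,
\[
\int_a^b \rho\,(\rho^\theta-\rhob^\theta)^2\,r^{n-1}\,dr\;\le\;\frac{E_0}{c_1},\qquad \theta=\tfrac{\gamma-1}{2}.
\]
Writing $\rho^\gamma=\rho\,(\rho^\theta)^2=\rho\big((\rho^\theta-\rhob^\theta)+\rhob^\theta\big)^2\le 2\rho(\rho^\theta-\rhob^\theta)^2+2\rhob^{\gamma-1}\rho$ and absorbing the cross term by Young's inequality, $2\rhob^{\gamma-1}\rho\le\tfrac12\rho^\gamma+C_\gamma\rhob^\gamma$, one is left with the pointwise inequality $\rho^\gamma\le 4\rho(\rho^\theta-\rhob^\theta)^2+2C_\gamma\rhob^\gamma$, where $C_\gamma$ depends only on $\gamma$. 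Integrating this against $r^{n-1}\,dr$ over $(a,b)$ and using the previous display yields
\[
\int_a^b \rho^\gamma\,r^{n-1}\,dr\;\le\;\frac{4E_0}{c_1}+2C_\gamma\rhob^\gamma\int_a^b r^{n-1}\,dr\;\le\;\frac{4E_0}{c_1}+\frac{2C_\gamma}{n}\,\rhob^\gamma b^n ,
\]
and combining with the first step gives the claim with $M=M(\gamma,a_1,E_0)$; the supremum in $t$ is harmless since \eqref{Energy_est} is uniform on $[0,T]$.

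There is no serious obstacle here; the only point needing attention is that $M$ must not depend on the vanishing parameters $\e$ and $\rhob$. Independence of $\e$ is automatic: the argument never invokes \eqref{epsilon-delta}, the inequality \eqref{Energy_est-a} holds irrespective of $\delta$, and $E_0$ is $\e$-independent by definition. For the $\rhob$-dependence, I would either note that the constant $c_1$ in \eqref{Energy_est-a} may be taken uniform over $\rhob\in(0,1)$, or, to sidestep this, replace \eqref{Energy_est-a} by the explicit pointwise bound $\bar h_\delta(\rho,\rhob)\ge\frac{\kappa}{2(\gamma-1)}\rho^\gamma-C_\gamma'\rhob^\gamma$ valid for all $\rho\ge0$, which follows directly from $h_\delta(\rho)=\frac{\kappa}{\gamma-1}\rho^\gamma+\delta\rho^2$ by discarding the nonnegative $\delta$-terms and applying Young's inequality to $\rhob^{\gamma-1}\rho$; this removes $c_1$ entirely and keeps every constant $\gamma$-dependent only.
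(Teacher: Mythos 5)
Your proposal is correct and follows essentially the same route as the paper: reduce the weight $r^l$ to $r^{n-1}$ using $r\ge a_1$, bound $\rho^\gamma$ by the relative internal energy plus $\rhob^\gamma$ via Young's inequality (your "alternative" pointwise bound on $\bar h_\delta$ is precisely the paper's estimate $\rho^\gamma\le M(\gamma)(\hat e(\rho)+\rhob^\gamma)$ for $\hat e(\rho)=\rho^\gamma-\rhob^\gamma-\gamma\rhob^{\gamma-1}(\rho-\rhob)$), and then invoke the uniform energy estimate \eqref{Energy_est}. Your extra care about the $\rhob$-dependence of $c_1$ is a worthwhile point the paper glosses over, and your second route resolves it cleanly.
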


\begin{proof} The proof is based on the energy estimate \eqref{Energy_est}. Let
$$
\hat{e}(\rho){}={}\rho^\gamma -\rhob^\gamma-\gamma\rhob^{\gamma-1}(\rho-\rhob).
$$
Using the Young inequality, we find that there exists $M(\gamma)>0$ such that
 $$
 \rho^\gamma\leq M(\gamma)\big(\hat{e}(\rho){}+{}\rhob^\gamma\big).
 $$
Then we have
\begin{equation*}
\int_{a_1}^b\rho^\gamma\,r^ldr{}
\leq{} M\Big(\int_{a_1}^b \hat{e}(\rho)\,r^ldr{}+{}\bar{\rho}^\gamma b^{l+1}\Big).
\end{equation*}
Since $0<a(\e)<1<b(\e)<\infty$, we have
\[
\int_{a_1}^b \hat{e}(\rho(t,r))\,r^ldr{}
\leq{}{a_1}^{l+1-n}\sup_{\tau\in[0,t]}\int_a^b \bar{\eta}^*(\rho(\tau,r),m(\tau,r))\,r^{n-1}dr{}\leq{}{a_1}^{1-n}E_0,
\]
by Proposition \ref{2.1} for $E_0$, independent of $\e$,
which implies that, for all $l=0, \cdots, n-1$,
\begin{equation*}
\int_{a_1}^b\rho^\gamma\,r^ldr{}
\leq{} M\big(a_1^{1-n} E_0 +{}\bar{\rho}^\gamma b^{n}\big)
\le M\big( 1+{}\bar{\rho}^\gamma b^{n}\big).
\end{equation*}
\end{proof}

\begin{lemma}
\label{lemma 3.2}
There exists $M=M(T)$, independent of $\e$, such that
\begin{equation}
\label{est:delta}
\int_0^T\int_r^b\rho^3\,y^{n-1}dydt{}\leq{}M\Big(1+ \frac{ b^n}{\e}\Big)\qquad \mbox{for any $r\in(a,b)$}.
\end{equation}
\end{lemma}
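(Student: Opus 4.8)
The plan is to exploit the second equation of the viscous system \eqref{eq:NS} to control $\int_r^b \rho^3\, y^{n-1}\,dy$ by testing against a suitable cut-off of the momentum balance, integrating over $(r,b)$ in space and over $(0,T)$ in time, and using the $\e$-independent energy estimate from Proposition \ref{2.1} together with Lemma \ref{claim:local_integrability}. Specifically, I would integrate the momentum equation in \eqref{eq:NS} in the radial variable from $r$ to $b$: since $p_\delta(\rho)=\kappa\rho^\gamma+\delta\rho^2$, the key pressure term $\delta\rho^2$ appears under a spatial derivative, so upon integrating $\bigl(\tfrac{m^2}{\rho}+p_\delta(\rho)\bigr)_r$ from $r$ to $b$ the boundary term at $r=b$ vanishes (by \eqref{eq:bc}, $\rho|_{r=b}=\rhob$, $m|_{r=b}=0$, so the contribution is just $p_\delta(\rhob)=\kappa\rhob^\gamma+\delta\rhob^2$, which is harmless). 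This yields, at each fixed $t$, an identity expressing $\tfrac{m^2}{\rho}+\kappa\rho^\gamma+\delta\rho^2$ at radius $r$ in terms of $\partial_t\int_r^b m\,dy$, the geometric source term $\tfrac{n-1}{y}\tfrac{m^2}{\rho}$ integrated over $(r,b)$, and the viscous term $\e\bigl(r^{-(n-1)}(r^{n-1}m)_r\bigr)$ evaluated at the endpoints.

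The next step is to multiply this identity by $\rho\, r^{n-1}$ and integrate in $r$ over $(a,b)$ and in $t$ over $(0,T)$, so that the leading term produces $\int_0^T\int_a^b \delta\rho^3\, r^{n-1}\,dr\,dt$ — but this is $\delta$ times what we want, not what we want. The correct route is instead to keep $\int_r^b\rho^3\,y^{n-1}\,dy$ on the left and bound it pointwise: from the integrated momentum balance one has
\[
\delta\int_r^b\rho^2\,y^{n-1}dy\ \le\ \kappa\int_r^b\rho^\gamma y^{n-1}dy \;+\;\Bigl|\partial_t\int_r^b m\,y^{n-1}dy\Bigr|\;+\;(n-1)\int_r^b\frac{m^2}{\rho}y^{n-2}dy\;+\;\e\bigl|\text{(endpoint viscous terms)}\bigr|,
\]
and since $\rho^3 = \rho\cdot\rho^2$ with $\rho$ already controlled in $L^\infty(Q_T)$ for each fixed $\e$ (but that bound depends on $\e$!), the honest approach must produce $\rho^3$ directly. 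So instead I would multiply the pointwise bound for $p_\delta(\rho)$ (hence for $\delta\rho^2+\kappa\rho^\gamma$) by $\rho$ and use $\rho^3\le C(\delta\rho^2\cdot\rho+\rho^{\gamma}\cdot\rho^{3-\gamma}\cdot\mathbf 1)$... — this is the point where the factor $1/\e$ in \eqref{est:delta} must enter, because the estimate trades one power of $\rho$ against the $\delta\rho^2$ term via $\rho = \delta^{-1}\cdot\delta\rho$ and the hypothesis $\delta<\e$ from Theorem \ref{main} together with \eqref{epsilon-delta} converts $\delta^{-1}b^n$-type quantities into $\e^{-1}b^n$.

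The main obstacle, and the step requiring the most care, is the time-derivative term $\partial_t\int_r^b m\,y^{n-1}dy$: after integrating in $t$ over $(0,T)$ it becomes $\int_r^b (m(T,y)-m_0^\e(y))\,y^{n-1}dy$, which must be bounded by $\int_a^b |m|\,y^{n-1}dy \le C\bigl(\int_a^b \tfrac{m^2}{\rho}y^{n-1}dy\bigr)^{1/2}\bigl(\int_a^b\rho\,y^{n-1}dy\bigr)^{1/2}$; the first factor is controlled by $E_0$ via Proposition \ref{2.1}, and the second by Lemma \ref{claim:local_integrability} (for $\gamma\le 3$, $\rho\le C(1+\rho^\gamma)$ pointwise would suffice, but near $\rho$ small we just use the bounded measure of $\{\rho>\tfrac32\rhob\}$ and $\rhob<1$) — but this introduces a factor of $b^{n}$ from the measure of $(a,b)$ against $y^{n-1}$, giving exactly the $b^n$ in \eqref{est:delta}. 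The geometric term $\int_0^T\int_r^b\tfrac{m^2}{\rho}y^{n-2}dy\,dt$ is likewise bounded by $T$ times $E_0$ using $y^{n-2}\le a^{-1}y^{n-1}$ on $(a,b)$, absorbing the $a$-dependence into $M=M(T)$; and the viscous endpoint terms are handled using the energy dissipation bound $\e\int_{Q_T}\rho|u_r|^2 r^{n-1}\le E_0$ together with the boundary condition $(r^{n-1}m)_r|_{r=a}=0$. Throughout, the crucial structural input is that all the genuinely $\e$-independent pieces ($E_0$, the measure bound, Lemma \ref{claim:local_integrability}) let the right-hand side be written as $M(1+b^n/\e)$, with the $1/\e$ appearing solely through the inversion of the $\delta\rho^2$ coefficient under the constraint $\delta<\e$.
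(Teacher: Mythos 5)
Your proposal takes a genuinely different route from the paper --- it is essentially the strategy of the higher-integrability estimate \eqref{HI_est_1} (testing the momentum balance and multiplying by $\rho$) --- and as written it contains two concrete gaps that break it. First, your mechanism for producing the factor $\frac{1}{\e}$ is backwards. From $\delta<\e$ one gets $\delta^{-1}>\e^{-1}$, and the constraint \eqref{epsilon-delta} gives $\delta^{-1}\ge b^n/(M\e)$; so writing $\rho=\delta^{-1}\cdot\delta\rho$ and ``converting $\delta^{-1}b^n$-type quantities into $\e^{-1}b^n$'' converts them in the wrong direction. Indeed, the momentum-equation route naturally controls $\delta\rho^3$ (this is exactly what \eqref{HI_est_1} yields), hence $\int\rho^3\lesssim \delta^{-1}$, which is \emph{weaker} than the asserted $M b^n/\e$. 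The genuine source of the $\e^{-1}$ in \eqref{est:delta} is the dissipation term in \eqref{Energy_est}, i.e.\ $\int_{Q_T}\rho^{\gamma-2}|\rho_r|^2\,r^{n-1}drdt\le E_0/\e$, which your argument never invokes. Second, you propose to bound the geometric term via $y^{n-2}\le a^{-1}y^{n-1}$ and ``absorb the $a$-dependence into $M=M(T)$''; but $a=a(\e)\to0$, so that constant is not independent of $\e$, and the lemma is asserted for every $r\in(a,b)$, including $r$ arbitrarily close to $a(\e)$. This is precisely why the paper runs your kind of argument only on compact subsets of $(a,b)$ with a cutoff $\omega$ in Lemma \ref{lemma:3.2}. (There is also an internal inconsistency in your displayed inequality: integrating $(\tfrac{m^2}{\rho}+p_\delta)_r$ over $(r,b)$ produces the pointwise value $\delta\rho^2(t,r)+\kappa\rho^\gamma(t,r)$, not $\delta\int_r^b\rho^2 y^{n-1}dy$.)

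For comparison, the paper's proof uses no equation at all beyond the energy estimate: for $\gamma\in(1,2)$ it writes $\int_r^b\rho^3y^{n-1}dy\le\sup_{(r,b)}\rho^{3-\gamma}\cdot\int_r^b\rho^\gamma y^{n-1}dy$, bounds the second factor by Lemma \ref{claim:local_integrability} together with \eqref{epsilon-delta}, controls $\sup\rho^{3-\gamma}$ by $M+M\int\rho^{2-\gamma}|\rho_y|\,dy$, applies Cauchy--Schwarz against the dissipation $\e\int\rho^{\gamma-2}|\rho_y|^2y^{n-1}\le E_0$ (this is where $\e^{-1}$ enters) and the Jensen inequality, and absorbs the resulting $\rho^3$ term into the left-hand side; for $\gamma\in[2,3]$ it first obtains $\sup_t\int_r^b\rho\,y^{n-1}dy\le Mb^n$ by H\"older and argues analogously. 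To repair your argument you would have to replace both the $\delta$-inversion and the crude $a^{-1}$ bound, at which point you are effectively forced back to this interpolation argument.
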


\begin{proof}
Consider first the case $\gamma\in(1,2)$. We estimate
\begin{eqnarray*}
\e\int_0^T\int_r^b\rho^3{}\, y^{n-1}dydt{}
&\leq&{}M\e\int_0^T\sup_{(r,b)}\rho^{3-\gamma}(t,\cdot)\,dt \notag \\
&\leq& M +M\e\int_0^T\int_r^b\rho^{3-\frac{3\gamma}{2}}|(\rho^{\frac{\gamma}{2}})_y|\,dydt   \notag \\
&\leq& M+M\e\int_0^T\int_r^b\rho^{6-3\gamma}(y^{n-1})^{-1}\,dydt\\
{}&=&{}
M+M\e\int_0^T\int_r^b \rho^{6-3\gamma} (y^{n-1})^{2-\gamma}(y^{n-1})^{\gamma-3}\,dydt\notag \\
&\leq& M{}+{}\frac{\e}{2}\int_0^T\int_r^b\rho^3\,y^{n-1}dydt,\notag
\end{eqnarray*}
where, in the last inequality, we have used the Jensen inequality. It follows from the above computation that
\[
\e\int_0^T\int_r^b \rho^3\,y^{n-1}dydt{}\leq{}M(T)\qquad \mbox{for all $r\in(a,b)$},
\]
which arrives at \eqref{lemma 3.2}.

Let now $\gamma\in[2,3]$. First, we notice that
\begin{eqnarray*}
\sup_{t\in[0,T]} \int_r^b \rho\,y^{n-1}dy
&\leq& \sup_{t\in[0,T]} \left( \int_a^b \rho^\gamma\,r^{n-1}dr \right)^{\frac{1}{\gamma}}
 \left(\int_r^b y^{n-1}dy\right)^{\frac{\gamma-1}{\gamma}}\\
&\leq & Mb^{\frac{n(\gamma-1)}{\gamma}}\le M b^n
\end{eqnarray*}
since $b>1$.

Then we argue as above:
\begin{eqnarray*}
\int_0^T\int_r^b \rho^3\,y^{n-1}dydt
&\leq& \int_0^T\Big(\sup_{(r,b)}\rho^2(t,\cdot)\int_r^b\rho\,y^{n-1}dy\Big) dt \\
&\leq& M b^{n}\left(1{}+{}\int_0^T\int_r^b \rho|\rho_r|\,dydt\right)\\
&=& M b^{n}\left(1{}
  +{}\int_0^T\int_r^b  \rho^{2-\frac{\gamma}{2}}\rho^{\frac{\gamma-2}{2}}|\rho_y|\,dydt\right)\\
&\leq&  M b^n\left(1{}+{}\frac{1}{\e}{}+{}\int_0^T\int_r^b\rho^{4-\gamma}(y^{n-1})^{-1}\,dydt\right)\\
&\leq&  M b^n\Big(1+\frac{1}{\e}\Big)\\
&\leq&  M \frac{b^n}{\e},
\end{eqnarray*}
where, in the last inequality, we have used the Jensen inequality
with powers $\frac{\gamma}{4-\gamma}$ and $\frac{\gamma}{2\gamma-4}$
and the energy estimate \eqref{Energy_est}.
\end{proof}

\begin{lemma}\label{lemma:3.2} Let $K$ be a compact subset of $(a,b)$.
Then, for $T>0$, there exists $M{}={}M(K,T)$ independent of $\e$ such that
\begin{equation}
\label{HI_est_1}
\int_0^T\int_K(\rho^{\gamma+1}+\delta \rho^3)\,drdt{}\leq{}M.
\end{equation}
\end{lemma}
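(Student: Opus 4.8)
\textbf{Proof plan for Lemma \ref{lemma:3.2}.}
The plan is to prove the local higher-integrability estimate $\int_0^T\int_K(\rho^{\gamma+1}+\delta\rho^3)\,drdt\le M$ by testing the momentum equation in \eqref{eq:NS} against a suitable spatial cutoff multiplied by $m$ (or equivalently $\rho u$), which is the standard way to convert the pressure term $p_\delta(\rho)=\kappa\rho^\gamma+\delta\rho^2$ into a gain of one power of $\rho$ over what the energy bound provides. Concretely, fix $\varphi\in C_0^\infty((a,b))$ with $\varphi\equiv1$ on $K$ and $0\le\varphi\le1$, and multiply the momentum equation by $\varphi(r)\, m$, or better, use the multiplier built from $\int_a^r\varphi(y)\,dy$ acting on the momentum flux, so that after integration by parts the leading term becomes $\int_0^T\int_a^b\varphi\big(\kappa\rho^{\gamma+1}+\delta\rho^3+\frac{m^2}{\rho}\cdot(\text{stuff})\big)\,drdt$. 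The remaining terms — time derivative $\partial_t(m\cdot\text{primitive})$, the convective term $\frac{m^2}{\rho}$, the geometric term $\frac{n-1}{r}\frac{m^2}{\rho}$, and the viscous terms $\e(r^{-(n-1)}(r^{n-1}m)_r)_r$ — must all be bounded using only the $\e$-independent estimates already available: the energy estimate \eqref{Energy_est} (which controls $\sup_t\int\frac{m^2}{2\rho}r^{n-1}dr$ and the dissipation $\e\int\rho|u_r|^2 r^{n-1}$), Lemma \ref{claim:local_integrability} (which controls $\sup_t\int_{a_1}^b\rho^\gamma r^l dr$), and Lemma \ref{lemma 3.2} (which controls $\e\int_0^T\int_r^b\rho^3 y^{n-1}\le M(1+b^n/\e)$, hence $\delta\int\!\int\rho^3\le (\delta/\e)\cdot\e\int\!\int\rho^3\le M(\delta/\e)(1+b^n/\e)\le M$ by \eqref{epsilon-delta}).

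First I would write the momentum equation in the conservative form $m_t+\big(\frac{m^2}{\rho}+p_\delta(\rho)\big)_r+\frac{n-1}{r}\frac{m^2}{\rho}=\e\big(r^{-(n-1)}(r^{n-1}m)_r\big)_r$, multiply by $w(r):=\int_a^r\varphi(y)\,dy$ (a bounded nonnegative function supported away from $r=a$, smooth, with $w'=\varphi$), and integrate over $Q_T=[0,T]\times(a,b)$. The term $\int_0^T\int w\,\big(\frac{m^2}{\rho}+p_\delta\big)_r\,drdt$ integrates by parts to $-\int_0^T\int \varphi\big(\frac{m^2}{\rho}+\kappa\rho^\gamma+\delta\rho^2\big)\,drdt+\text{(boundary terms that vanish since $w(a)=0$ and $\varphi$ compactly supported)}$, so multiplying by $\rho$... — actually the cleaner route: multiply the pressure term directly. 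I would instead multiply the momentum equation by $\varphi(r)\,u=\varphi\,\frac{m}{\rho}$, giving the mechanical-energy-flux structure; the term $\varphi\,u\,p_\delta(\rho)_r$ combines with $\varphi\,u_r\,(\text{something})$ and the continuity equation to produce, after integration by parts and use of \eqref{eq:NS}$_1$, a term of the form $\int_0^T\int\varphi'\,q^*_\delta$-type fluxes plus the desired positive term. Either way, the key identity is that one spatial integration by parts on the pressure term, combined with the continuity equation to handle $\rho_t$, yields $\int_0^T\int_K p_\delta(\rho)\cdot\rho\,drdt$ up to controllable errors, which is exactly $\int_0^T\int_K(\kappa\rho^{\gamma+1}+\delta\rho^3)\,drdt$.

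The main obstacle I anticipate is controlling the terms involving $\frac{m^2}{\rho}$ and its spatial derivative (the convective and geometric terms) and the viscous term $\e m_{rr}$ after the manipulation: these are not directly bounded by the energy estimate alone because the energy only gives $\frac{m^2}{\rho}\in L^\infty_t L^1_x(r^{n-1}dr)$ and $\sqrt\rho\, u_r\in L^2_{t,r}(r^{n-1})$ with an $\e$ in front. The trick — used already in Lemma \ref{lemma 3.2} — is to interpolate: bound $\int\!\int\frac{m^2}{\rho}\cdot(\text{lower power of }\rho)$ by splitting $\frac{m^2}{\rho}=\rho u^2$ and using $\sup_t\int_r^b\rho\,y^{n-1}dy\le Mb^n$ (for $\gamma\ge2$, from Lemma \ref{lemma 3.2}'s proof) or the corresponding $\gamma\in(1,2)$ estimate, together with the $L^\infty$-in-time energy bound on $\int\rho u^2 r^{n-1}$, and absorbing the leftover $\rho^3$ contribution into the $\delta\rho^3$ term on the left via \eqref{epsilon-delta} or into $M$. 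The viscous term is handled by integration by parts throwing the derivative onto $\varphi\,u$, producing $\e\int\!\int\varphi\,\rho^{1/2}u_r\cdot\rho^{1/2}(\cdots)$ which is then dominated by the $\e$-weighted dissipation from \eqref{Energy_est} plus a Cauchy–Schwarz remainder of lower order. Collecting all estimates and using \eqref{epsilon-delta} to convert every stray factor of $\delta/\e$, $\rhob^\gamma b^n$, or $(\delta/\e)b^n$ into the uniform constant $M$ finishes the proof.
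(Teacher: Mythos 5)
Your high-level plan --- gain one power of $\rho$ in the pressure by testing the momentum equation against a potential-type multiplier, and control the errors using \eqref{Energy_est}, Lemmas \ref{claim:local_integrability} and \ref{lemma 3.2}, and condition \eqref{epsilon-delta} --- is the right one and matches the paper's strategy. But as written the proof has two genuine gaps. First, none of the multipliers you actually write down produces the crucial term $\rho\,p_\delta(\rho)$. Testing with $w(r)=\int_a^r\varphi$ gives, after one integration by parts, only $\int\varphi\,p_\delta\sim\rho^\gamma$ (no gain of a power); testing with $\varphi\,m$ leaves an uncontrolled $m_r\,p_\delta$ after integration by parts; and testing with $\varphi\,u$ merely reproduces the mechanical energy balance, whose flux $q^*_\delta\sim\rho^\gamma|u|$ is not $\rho^{\gamma+1}$. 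The multiplier must itself carry a factor of $\rho$: the paper integrates the momentum equation over $(r,b)$ to form the momentum potential $\int_r^b\rho u\,\omega\,dy$, multiplies the resulting identity by $\rho\,\omega$, and uses the continuity equation on the time-derivative of $\rho\int_r^b\rho u\,\omega\,dy$; only then does the pressure term emerge as $p_\delta\,\rho\,\omega^2$ (see \eqref{3.6a}--\eqref{3.7a}). You gesture at ``combined with the continuity equation to handle $\rho_t$'' but never exhibit a multiplier for which this actually works.

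Second, the viscous term leaves a remainder $\e\int_{Q_T}\rho^3\omega^2\,drdt$ (via $\e\rho^2|u_r|\omega^2\le\e\rho^3\omega^2+\e\rho|u_r|^2\omega^2$), and this is \emph{not} of lower order and cannot be absorbed the way you propose: it is not dominated by the $\delta\rho^3$ on the left since $\delta<\e$ (the inequality goes the wrong way), and Lemma \ref{lemma 3.2} only yields $\e\int_0^T\int\rho^3 y^{n-1}\,dydt\le M(\e+b^n)$, which blows up as $b(\e)\to\infty$. The paper devotes a separate step (the claim \eqref{3.8a}, proved by the iteration $\beta_n=2\beta_{n-1}-3\gamma$) to showing $\e\int_{Q_t}\rho^3\omega^2\le M+M\e\int_{Q_t}\rho^{\gamma+1}\omega^2$, after which the last term is absorbed into the left-hand side for small $\e$. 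Without this, or an equivalent localized argument, your proof does not close. (By contrast, the $\delta\rho^3$ part of the conclusion is indeed harmless: $\delta\int_0^T\int_K\rho^3\le M\delta(1+b^n/\e)\le M$ by Lemma \ref{lemma 3.2} and \eqref{epsilon-delta}, essentially as you indicate, modulo a factor of $\e$ misplaced in your displayed bound.)
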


\begin{proof} We divide the proof into five steps.

\smallskip
1. Let $\omega(r)$ be a smooth positive, compactly supported function on $(a,b)$.
We multiply the momentum equation in \eqref{eq:NS} by $\omega$ to obtain
\begin{eqnarray}\label{3.3}
&&(\rho u \omega)_t{}+{}\big((\rho u^2+p_\delta)\omega\big)_r
{}+{}\frac{n-1}{r}\rho u^2\omega{}-{}\e\big(\omega(m_r{}+{}\frac{n-1}{r}m)\big)_r \notag \\
&&{}={}\big(\rho u^2+p_\delta{}-{}\e(m_r{}+{}\frac{n-1}{r}m)\big)\omega_r.
\end{eqnarray}
Integrating \eqref{3.3} in $r$ over $(r,b)$ yields
\begin{eqnarray}\label{3.4}
\Big(\int_r^b\rho u\omega\,dy\Big)_t
{}+{}\int_r^b\frac{n-1}{y}\rho u^2\omega\,dy
{}+{}\e\omega\big(m_r{}+{}\frac{n-1}{r}m\big)
{}={}\omega(\rho u^2{}+{}p_\delta){}+{}f_1,
\end{eqnarray}
where
\[
f_1{}={}\int_r^b\big(\rho u^2 +p_\delta -\e(m_y{}+{}\frac{n-1}{y}m)\big)\omega_y\,dy.
\]

\smallskip
2. Multiplying \eqref{3.4} by $\rho$ and using the continuity equation \eqref{eq:NS}, we have
\begin{eqnarray*}
&&\Big(\rho\int_r^b\rho u\omega\,dy\Big)_t {}+{}\Big((\rho u)_r{}+{}\frac{n-1}{r}\rho m
{}-{}\e(\rho_{rr}{}+{}\frac{n-1}{r}\rho_r)\Big)\int_r^b\rho u\omega\,dy \notag \\
&&\quad +\rho\int_r^b\frac{n-1}{y}\rho u^2\omega\,dy{}+{}\e\rho\omega\big(m_r{}+{}\frac{n-1}{r}m\big)\\
&&{}={}(\rho^2 u^2 + \rho p_\delta)\omega{}+{}\rho f_1,
\end{eqnarray*}
and
\begin{eqnarray}
&&\Big(\rho\int_r^b\rho u\omega\,dy\Big)_t{}+{}\Big(\rho u\int_r^b\rho u\omega\,dy\Big)_r\notag \\
&&{}+{}\e\Big( -\big(\rho_{rr}{}+{}\frac{n-1}{r}\rho_r\big)\int_r^b\rho u\omega\,dy
{}+{}\rho\omega\big(m_r{}+{}\frac{n-1}{r}m\big)\Big)\notag\\
&&{}={}\rho p_\delta \omega {}+{}f_2,
\end{eqnarray}
where
\[
f_2{}={}\rho f_1{}-{}\frac{n-1}{r}\rho m\int_r^b\rho u\omega\,dy-\rho\int_r^b\frac{n-1}{y}\rho u^2\omega dy.
\]
Notice that
\begin{eqnarray*}
&&-\big(\rho_{rr}{}+{}\frac{n-1}{r}\rho_r\big)\int_r^b\rho u\omega\,dy
{}+{}\rho\omega\big(m_r{}+{}\frac{n-1}{r}m\big)\notag\\
&&={} -\Big(\rho_r\int_r^b\rho u \omega\,dy\Big)_r{}-{}\rho u\rho_r\omega
{}-{}\left(\frac{n-1}{r}\rho\int_r^b\rho u \omega\,dy\right)_r{}-{}\frac{n-1}{r}\rho^2 u\omega \notag \\
{}&&\quad +{}\frac{n-1}{r^2}\rho\int_r^b\rho u \omega\,dy{}
  +{}\rho^2 u_r\omega
{}+{}\rho u\rho_r\omega {}+{}\frac{n-1}{r}\rho^2 u\omega \notag \\
{}&&={}
-\big(\rho\int_r^b\rho u\omega\,dy\big)_{rr}{}-{}(\rho^2 u\omega)_r{}
-{}\left(\frac{n-1}{r}\rho\int_r^b\rho u\omega\,dy\right)_r \notag \\
{}&&\quad +{}\rho^2 u_r\omega{}+{}\frac{n-1}{r^2}\rho\int_r^b\rho u\omega\,dy.
\end{eqnarray*}
It then follows that
\begin{eqnarray}\label{3.6a}
&&\Big(\rho\int_r^b\rho u\omega\,dy\Big)_t{}+{}\Big(\rho u\int_r^b\rho u\omega\,dy\Big)_r{}
-{}\e\Big(\rho\int_r^b\rho u\omega\,dy\Big)_{rr}{}-{}\e(\rho^2 u\omega)_r \notag \\
&&\quad{}-{}\e\left(\frac{n-1}{r}\rho\int_r^b\rho u\omega\,dy\right)_r
{}+{}\e\rho^2 u_r\omega\notag\\
&&{}={}p_\delta\rho\omega{}+{}f_3,
\end{eqnarray}
where
$f_3{}={}f_2{}-{}\e\frac{n-1}{r^2}\rho\int_r^b\rho u\omega\,dy$.

\smallskip
3. We multiply \eqref{3.6a} by $\omega$ to obtain
\begin{eqnarray}
&&\Big(\rho\omega\int_r^b\rho u\omega\,dy\Big)_t{}
+{}\Big(\rho u\omega\int_r^b\rho u\omega\,dy\Big)_r{}
-{}\e\Big(\omega\big(\rho\int_r^b\rho u\omega\,dy\big)_{r}\Big)_r {}\notag \\
&&\quad  +{}\e\Big(\rho\omega_r\int_r^b\rho u\omega\,dy\Big)_r
 {}-{}\e(\rho^2 u\omega^2)_r{}-{}\e\left(\frac{n-1}{r}\rho\omega\int_r^b\rho u\omega\,dy\right)_r
 \notag\\
&&\quad +{}\e\rho^2u_r\omega^2{}+{}\e\rho^2 u\omega \omega_r\notag\\[2mm]
&&={}p_\delta\rho\omega^2{}+{}f_4, \label{3.7a}
\end{eqnarray}
where
$f_4{}={} \omega f_3{}+{}\rho u\omega_r\int_r^b\rho u\omega\,dy{}
{}-{}\frac{n-1}{r}\rho\omega_r\int_r^b\rho u \omega\,dy$.

\smallskip
We integrate \eqref{3.7a} over $[0,T]\times[a,b]$ to obtain
\begin{eqnarray}
&&\int_{Q_T} \big(\delta\rho^3+\kappa\rho^{\gamma+1}\big)\omega^2\,drdt{}\notag\\
&&={} \int_{Q_T}\big(\e\rho^2u_r\omega^2{}+{}\e\rho^2 u\omega\omega_r\big)\,drdt \notag \\
&&\quad +{}\int_a^b\Big(\rho\omega\int_r^b\rho u\omega\,dy\Big)\Big|^T_0\,dr
{}-{}\int_{Q_T} f_4\,drdt \notag \\
&&\leq{} \e\int_{Q_T}\rho^3\omega^2\,drdt{}
+{}\e M\int_{Q_T}\big(\rho|u_r|^2\omega^2{}+{}\rho|u|^2|\omega_r|^2\big)\,drdt \notag \\
&&\quad +{}\int_a^b\Big(\rho\omega\int_r^b\rho u\omega\,dy\Big)\big|^T_0\,dr {}
-{}\int_{Q_T} f_4\,drdt \notag \\
\label{eq:current-1}
&&\leq \e\int_{Q_T}\rho^3\omega^2\,drdt{}+{}M(\supp\,\omega,T,E_0).
\end{eqnarray}
The last inequality follows easily from \eqref{Energy_est}--\eqref{est:energy_2}
and the formula for $f_4$.

\medskip
4. Claim:
There exists $M=M(\mbox{supp}\,\omega,T,E_0)$ such that
\begin{equation}\label{3.8a}
\e\int_{Q_t} \rho^3\omega^2\, drdt{}\leq{} M{}+{}M\e\int_{Q_t}\rho^{\gamma+1}\omega^2\,drdt.
\end{equation}

\medskip
If $\gamma\geq 2$, the claim is trivial.
Let $\gamma{}<{}\beta{}\leq{} 3$. We estimate
\begin{eqnarray}\label{3.11}
&&\e\int_{Q_T}\rho^\beta\omega^2 dx dt\\
&&\leq {}\e\sup_{\supp\,\omega}\big(\rho^{\beta-\gamma}\omega^2\big)
\int_{Q_T\cap\, {\supp\,\omega}}\rho^\gamma{}drdt \notag\\
&&\leq{} \e M\sup_{\supp\,\omega}\big(\rho^{\beta-\gamma}\omega^2\big)\notag \\
&&\leq{}\e M\int_{Q_T}\rho^{\beta-\gamma-\frac{\gamma}{2}}|(\rho^{\frac{\gamma}{2}})_r|\omega^2{}drdt
+{}\e M\int_{Q_T} \rho^{\beta-\gamma}\omega|\omega_r| drdt\notag \\
&&\leq \e M\Big(\int_{Q_T\cap\, {\supp\,\omega}}\rho^\gamma drdt {}
+{} \int_{Q_T}|(\rho^{\frac{\gamma}{2}})_r|^2\omega^2drdt{}
+{}\int_{Q_T} \rho^{2\beta-3\gamma}\omega^2drdt\Big)\notag \\
&&\leq M\Big( 1{}+{}\e \int_{Q_T}\rho^{2\beta-3\gamma}\omega^2 drdt\Big).
\end{eqnarray}

If $2\beta-3\gamma\leq\gamma+1$, the estimate of the claim follows.
Otherwise, since $2\beta-3\gamma<\beta$ (note that $\beta\leq 3$),
we can iterate \eqref{3.11} with $\beta$ replaced by $2\beta-3\gamma$
and improve \eqref{3.11}:
\begin{eqnarray}
\e\int_{Q_T}\rho^\beta\omega^2 drdt
{}\leq{}  M\Big(1 {}+{}\e \int_{Q_T}\rho^{4\beta-9\gamma}\omega^2 drdt\Big).
\end{eqnarray}
If $4\beta-9\gamma$ is still larger than $\gamma+1$,
we iterate the estimate again.
In this way, we obtain
a recurrence relation $\beta_n{}={}2\beta_{n-1}-3\gamma$, $\beta_0=\beta\leq3$, and the estimate
\[
\e\int_{Q_T}\rho^\beta\omega^2 drdt
{}\leq{}  M(n)\Big(1 {}+{}\e \int_{Q_T} \rho^{\beta_n\gamma}\omega^2drdt \Big).
\]
Solving the recurrence relation, we obtain
$$
\beta_n{}={}2^n\beta{}-{}3\gamma(2^{n-1}-1).
$$
For some $n$, the expression is less than $\gamma+1$
(note that $\beta\leq 3$).
Then the expected estimate is obtained.

\medskip
5. Now returning to \eqref{eq:current-1}, we have
\[
\int_{Q_T}\big(\rho^{\gamma+1}+\delta\rho^2\big)\omega^2 drdt {}\leq{}M(\supp\,\omega, T, E_0)
\]
for all small  $\e>0$.
\end{proof}

The following lemma holds for weak entropies $\eta$ (also {\it cf.} \cite{DiPerna}).
\begin{lemma}
\label{Energy_control_lemma}
 Let $\eta^*(\rho,m)$ be the mechanical energy of system \eqref{eq:Eu},
 and let $(\eta_\psi,q_\psi)$ be an entropy pair
 \eqref{eta}--\eqref{q} with the generating function $\psi(s)$ satisfying
\[
\sup_s|\psi''(s)|{}<{}\infty.
\]
Then,
for any $(\rho,m){}\in{}\mathbb{R}^2_+$ and any vector $\bar{a}{}={}(a_1,a_2)$,
\begin{equation}
|\bar{a}\grad^2\eta\bar{a}^\top|{}\leq{} M_\psi\,  \bar{a}\grad^2\eta^*\bar{a}^\top
\qquad \mbox{for some $M_\psi>0$}.
\end{equation}
\end{lemma}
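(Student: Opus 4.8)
The plan is to establish the pointwise bound $|\bar a\,\grad^2\eta_\psi\,\bar a^\top|\le M_\psi\,\bar a\,\grad^2\eta^*\,\bar a^\top$ by computing the Hessians of both entropies explicitly in Riemann-invariant-adapted coordinates and comparing the two quadratic forms direction by direction. The key structural fact is that for the isentropic Euler system the entropy equation $\eta_{mm}=\rho^{2\theta-2}\eta_{\rho\rho}+\text{(lower order)}$ — more precisely, any weak entropy $\eta$ satisfies a wave equation in $(\rho,m)$ (equivalently in $(\rho,w)$ or $(\rho,z)$) with principal part governed by the characteristic directions $r_1,r_2$. In the eigenbasis $\{r_1,r_2\}$ the Hessian of $\eta_\psi$ is essentially diagonal up to curvature terms: writing $\bar a=\alpha_1 r_1+\alpha_2 r_2$, one gets $\bar a\,\grad^2\eta_\psi\,\bar a^\top=\sum_{i}\alpha_i^2\,(r_i\,\grad^2\eta_\psi\,r_i^\top)+2\alpha_1\alpha_2\,(r_1\,\grad^2\eta_\psi\,r_2^\top)$, and the cross term is controlled by the diagonal ones because the entropy-wave structure forces $r_1\grad^2\eta\,r_2^\top$ to be bounded by a geometric mean of $r_i\grad^2\eta\,r_i^\top$ up to the same constant for all $\psi$.

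The concrete computation I would carry out: from \eqref{eta}, differentiate under the integral sign to express $\eta_{\psi,\rho\rho}$, $\eta_{\psi,\rho m}$, $\eta_{\psi,mm}$ as integrals of $\psi$, $\psi'$, $\psi''$ against $[1-s^2]^\lambda_+$ and its $s$-derivatives evaluated at $s$ with argument $\tfrac m\rho+\rho^\theta s$. After integrating by parts in $s$ to move derivatives off $[1-s^2]^\lambda_+$ (this is the standard device, valid since $\lambda>-1$ for $\gamma<3$; the borderline $\gamma=3$ case where $\lambda=0$ is handled separately and is in fact elementary), each second derivative of $\eta_\psi$ becomes $\rho^{c}$ times an integral $\int\psi''(\tfrac m\rho+\rho^\theta s)\,K(s)\,ds$ with a fixed integrable kernel $K$ depending only on $\gamma$. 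The same recipe applied to $\eta^*$ (which corresponds to the generating function $\psi^*(s)=\tfrac12 s^2$, so $\psi^{*\prime\prime}\equiv 1$) yields $\bar a\,\grad^2\eta^*\,\bar a^\top$ as an explicit positive quadratic form — indeed one already knows $\bar a\,\grad^2\eta^*\,\bar a^\top$ dominates $\kappa\gamma\rho^{\gamma-2}a_1^2+\rho(\text{something})$, as used in Proposition \ref{2.1}. Then $|\bar a\,\grad^2\eta_\psi\,\bar a^\top|$ is bounded by $\sup_s|\psi''(s)|$ times the integral obtained by replacing $\psi''$ by $1$, i.e. by $\sup_s|\psi''|$ times a constant multiple of $\bar a\,\grad^2\eta^*\,\bar a^\top$; this gives $M_\psi=C(\gamma)\sup_s|\psi''(s)|$.

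I would organize it as: (1) record the integral formulas for all three second partials of $\eta_\psi$; (2) do the integration by parts in $s$ to exhibit the common $\psi''$-against-kernel structure, noting this makes the $\psi$-dependence of $\grad^2\eta_\psi$ enter only through $\psi''$; (3) write $\grad^2\eta^*$ in the same form (it is the case $\psi''\equiv1$) and verify positive-definiteness of the relevant quadratic forms, referencing the bound already invoked after \eqref{est:energy_2}; (4) conclude by the elementary inequality $|\int\psi''K|\le\sup|\psi''|\int|K|$ applied coefficient-by-coefficient and then to the full quadratic form $\bar a\grad^2(\cdot)\bar a^\top$.

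The main obstacle is the sign and integrability bookkeeping in step (2)–(3): one must be careful that after integrating by parts the kernels multiplying $\psi''$ in the *comparison* form $\bar a\grad^2\eta^*\bar a^\top$ are genuinely nonnegative (or that the positive part dominates), so that replacing $\psi''$ by its sup really produces an upper bound by $M_\psi$ times that form rather than by $M_\psi$ times something merely comparable in $L^1$. Equivalently, one needs the quadratic form $\bar a\,\grad^2\eta^*\,\bar a^\top$ to be, up to a $\gamma$-dependent constant, the pointwise majorant of the kernel appearing in $\bar a\,\grad^2\eta_\psi\,\bar a^\top$ for every direction $\bar a$ simultaneously — this is exactly the content of the entropy-wave comparison and is where the structure of the Euler system (as opposed to a generic $2\times2$ system) is essential. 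The degenerate cases $\gamma=3$ (then $\lambda=0$, $\theta=1$, and $\eta_\psi,q_\psi$ are given by finite expressions in $\tfrac m\rho\pm\rho$) and the vacuum boundary $\rho=0$ (where weak entropies vanish, condition \eqref{7.1.1.2}, so both sides degenerate consistently) should be checked but present no real difficulty.
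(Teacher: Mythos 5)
Your plan is the standard argument---the paper itself offers no proof of this lemma, only the pointer to DiPerna and \cite{CP}---and the step you single out as the main obstacle does check out: differentiating \eqref{eta} under the integral sign and integrating by parts once in $s$ against $G(s)=-[1-s^2]_+^{\lambda+1}/(2(\lambda+1))$ (whose boundary terms vanish since $\lambda\ge0$ for $\gamma\in(1,3]$) gives
\begin{align*}
\bar{a}\,\grad^2\eta_\psi\,\bar{a}^\top
={}&\frac{1}{\rho}\int_{-1}^{1}\psi''\big(\tfrac{m}{\rho}+\rho^\theta s\big)\,\big(a_2-(\tfrac{m}{\rho}-\theta\rho^{\theta}s)\,a_1\big)^2\,[1-s^2]^{\lambda}\,ds\\
&+\frac{\theta(1+\theta)}{2(\lambda+1)}\,\rho^{2\theta-1}a_1^2\int_{-1}^{1}\psi''\big(\tfrac{m}{\rho}+\rho^\theta s\big)\,[1-s^2]^{\lambda+1}\,ds,
\end{align*}
so both kernels are manifestly nonnegative. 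Setting $\psi''\equiv1$ makes the cross term in the first integral vanish by oddness and reduces the right-hand side to $c_1\rho^{-1}(a_2-\tfrac{m}{\rho}a_1)^2+c_2\rho^{\gamma-2}a_1^2$ (note $2\theta-1=\gamma-2$), which is exactly comparable to $\bar{a}\,\grad^2\eta^*\,\bar{a}^\top$ up to $\gamma$-dependent constants, whence $M_\psi=C(\gamma)\sup_s|\psi''(s)|$; the eigenbasis decomposition and cross-term discussion in your first paragraph is an unnecessary detour once this identity is in hand.
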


\begin{lemma}\label{HI:2}
Let $K\subset (a,b)$ be compact.
There exists $M{}={}M(K,T)$ independent of $\e$
such that, for any $\e>0$,
\begin{align*}
\int_0^T\int_K\big(\rho|u|^3{}+{}\rho^{\gamma+\theta}\big)\,drdt
{}\leq{}M\big(1+\rhob^\gamma b^n
{}+{}\frac{\delta}{\e}b^n\big).
\end{align*}
\end{lemma}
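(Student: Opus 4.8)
The plan is to exploit the weak entropy pair $(\eta_\psi,q_\psi)$ from \eqref{eta}--\eqref{q} generated by $\psi(s)=\tfrac12 s|s|$, whose entropy flux encodes precisely the two quantities to be controlled. Three pointwise facts drive the argument; they concern an arbitrary state $(\rho,m)\in\R_+\times\R$, with $u=m/\rho$. First, since $\psi$ is odd and quadratic, a direct estimate of the kinetic integrals gives
\[
|\eta_\psi(\rho,m)|\le C\,\eta^*(\rho,m),\qquad
q_\psi(\rho,m)\ge c_0\big(\rho|u|^3+\rho^{\gamma+\theta}\big)-C_0\,\eta^*(\rho,m),
\]
for constants $c_0,C_0,C>0$ depending only on $\gamma$. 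Second, a computation from the representation formulas yields, for this $\psi$,
\[
G_\psi:=m\,\eta_{\psi,\rho}+\tfrac{m^2}{\rho}\eta_{\psi,m}-q_\psi
=\tfrac{\theta}{2}\,\rho^{1+\theta}\!\int_{\R}s\,|u+\rho^\theta s|\,(u-\rho^\theta s)\,[1-s^2]^{\lambda}_+\,ds,
\]
and symmetrising the integrand under $s\mapsto-s$ (the weight $[1-s^2]^{\lambda}_+$ being even) shows that this equals $-\tfrac{\theta}{2}\rho^{1+\theta}\int_{\{\rho^\theta|s|>|u|\}}|s|\,(\rho^{2\theta}s^2-u^2)\,[1-s^2]^{\lambda}_+\,ds\le 0$; one also records $\eta_{\psi,\rho}(\rho,0)=0$, $\eta_{\psi,m}(\rho,0)=c_\psi\rho^\theta$, $q_\psi(\rho,0)=C_\psi\rho^{\gamma+\theta}$. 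Third, $\sup_s|\psi''(s)|<\infty$, so Lemma \ref{Energy_control_lemma} controls the viscous Hessian term of $\eta_\psi$ by that of $\eta^*$.

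Multiplying the two equations of \eqref{eq:NS} by $(\eta_{\psi,\rho}r^{n-1},\eta_{\psi,m}r^{n-1})$, adding, and separating the $\delta\rho^2$ part of $p_\delta$, gives
\[
(\eta_\psi r^{n-1})_t+(q_\psi r^{n-1})_r+(n-1)r^{n-2}G_\psi
=\e\,\partial_r\big(r^{n-1}\partial_r\eta_\psi\big)-\e\,r^{n-1}(\rho_r,m_r)\grad^2\eta_\psi(\rho_r,m_r)^\top-(n-1)\e\,r^{n-3}m\,\eta_{\psi,m}-\delta(\rho^2)_r\,\eta_{\psi,m}\,r^{n-1}.
\]
Fix $\omega\ge0$ smooth with $\omega\equiv1$ on $K$ and $\supp\omega$ compactly contained in $(a,b)$, set $a_1=\inf\supp\omega>a$ and $\Theta(r)=\int_a^r\omega(y)\,dy$, so that $0\le\Theta\le\|\omega\|_{L^1}$, $\Theta(a)=0$ and $\Theta'=\omega$. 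Multiply the identity by $\Theta$ and integrate over $Q_T$; integrating $(q_\psi r^{n-1})_r\,\Theta$ by parts in $r$ extracts $\int_{Q_T}\omega\,q_\psi r^{n-1}\,drdt$ (the $r=a$ boundary contribution vanishing since $\Theta(a)=0$, the $r=b$ one being $\int_0^T C_\psi\rhob^{\gamma+\theta}b^{n-1}\Theta(b)\,dt\le M$ by \eqref{epsilon-delta}), while the geometric source term $(n-1)\int_{Q_T}r^{n-2}G_\psi\,\Theta$ is $\le0$ by the sign of $G_\psi$ and is discarded.

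The remaining terms are all bounded $\e$--uniformly. The time term equals $\int_a^b[\eta_\psi r^{n-1}\Theta]_0^T\,dr$, hence is $\le\|\omega\|_{L^1}\big(\sup_t\int_{a_1}^b\eta^*(\rho,m)r^{n-1}dr+\int_{a_1}^b\eta^*(\rho_0^\e,m_0^\e)r^{n-1}dr\big)$, controlled by Proposition \ref{2.1} together with Lemma \ref{claim:local_integrability} and hypothesis (iv) of Theorem \ref{main}. The dissipation term is $\le M_\psi\|\omega\|_{L^1}\,\e\int_{Q_T}r^{n-1}(\rho_r,m_r)\grad^2\eta^*(\rho_r,m_r)^\top drdt\le M_\psi\|\omega\|_{L^1}E_0$ by Lemma \ref{Energy_control_lemma} and \eqref{Energy_est}, and the viscous geometric term is $\le C\|\omega\|_{L^1}\,\e\int_{Q_T}\tfrac{\rho u^2+\rho^\gamma}{r^2}r^{n-1}drdt\le M$ again by \eqref{Energy_est} and Lemma \ref{claim:local_integrability}. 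For the viscous divergence term, $r^{n-1}\partial_r\eta_\psi=r^{n-1}\partial_r\big(\eta_\psi(\rho,m)\big)$, so one integrates by parts once against $\Theta$ — the $r=a$ boundary term vanishing since $\Theta(a)=0$, and the $r=b$ one reducing, via $\eta_{\psi,\rho}(\rhob,0)=0$ and the continuity equation at $r=b$, to a term carrying an extra factor $\e^2$ — and then once more, moving the surviving $r$--derivative onto the compactly supported $r^{n-1}\omega$, reaching a quantity $\le\e\,C(K)\int_{Q_T}\eta^*\,drdt\to0$. Finally, integrating $\delta(\rho^2)_r\eta_{\psi,m}r^{n-1}$ by parts in $r$ and using the energy dissipation bound with Lemma \ref{lemma 3.2} bounds it by $M\big(1+\tfrac{\delta}{\e}b^n\big)$. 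Collecting everything, $\int_{Q_T}\omega\,q_\psi r^{n-1}\,drdt\le M\big(1+\rhob^\gamma b^n+\tfrac{\delta}{\e}b^n\big)$; then the pointwise lower bound on $q_\psi$, the fact that $r^{n-1}$ is bounded below on $K$, and $\int_0^T\int_K\eta^*\,drdt\le M$ yield the assertion.

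The principal obstacles are the two structural inputs — the pointwise lower bound on $q_\psi$ and, above all, the sign $G_\psi\le0$, which turns the spherical geometric source into a dissipative term that can simply be dropped rather than fought — together with the bookkeeping needed to keep every constant independent of $\e$, in particular the control of the boundary contributions at $r=b$ produced by the artificial viscosity and the treatment of the $\delta\rho^2$ correction, where the balance \eqref{epsilon-delta} enters.
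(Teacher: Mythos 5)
Your strategy is the paper's: test the viscous system with the weak entropy pair generated by $\psi(s)=\tfrac12 s|s|$, use the sign of the geometric source $m\eta_{\psi,\rho}+\tfrac{m^2}{\rho}\eta_{\psi,m}-q_\psi\le 0$ together with the coercivity of $q_\psi$, and extract $\int\omega\,q_\psi r^{n-1}$ by testing against a primitive $\Theta$ of the cutoff (the paper equivalently integrates the identity over $(r,b)$ and then against $\omega(r)\,dr$, which is the same computation). Most steps are sound, and your double integration by parts of $\e\,\partial_r(r^{n-1}\partial_r\eta_\psi)$ onto $(r^{n-1}\omega)_r$ is in fact slightly cleaner than the paper's treatment of the corresponding term $J_3$, which expands $\tilde\eta_r$ and absorbs a piece of $\rho|u|^3+\rho^{\gamma+\theta}$ with coefficient $\tfrac12$ using the claim \eqref{3.8a}.

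The genuine gap is the boundary term at $r=b$ produced by the first integration by parts of the viscous flux. You work with $\eta_\psi$ itself, whereas the paper works with the relative entropy $\tilde\eta=\eta_\psi-\nabla\eta_\psi(\rhob,0)\cdot(\rho-\rhob,m)$. Since $\eta_{\psi,m}(\rhob,0)=c_\psi\rhob^\theta\neq 0$, the term $\e\,r^{n-1}\partial_r\eta_\psi\,\Theta\big|_{r=b}$ equals $\e\,c_\psi\rhob^\theta b^{n-1}\Theta(b)\,m_r(t,b)$, and after invoking the continuity equation at $r=b$ it becomes $\e^2 c_\psi\rhob^\theta\,\Theta(b)\,(r^{n-1}\rho_r)_r(t,b)$. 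The extra factor $\e^2$ does not make this negligible: the only control on second derivatives is the interior $L^2$ bound \eqref{2.10a}, whose constant depends on $\e$ (and on the $H^1$ norm of the $\e$--dependent initial data), and there is no trace estimate for $\rho_{rr}(t,b)$ that is uniform in $\e$. The paper avoids this entirely because $\nabla\tilde\eta(\rhob,0)=0$, so the corresponding boundary contribution vanishes identically; the price is the extra geometric term $\check\eta_m(\rhob,0)p(\rho)$ (the paper's $I_4$), which is then bounded via Lemma \ref{claim:local_integrability}. You should either switch to the relative entropy or otherwise justify this boundary term; as written the step fails. A secondary remark: your one-line disposal of the $\delta(\rho^2)_r\eta_{\psi,m}$ term is correct in outcome but needs the estimates $|(\eta_{\psi,m})_\rho|\le M\rho^{\theta-1}$, $|(\eta_{\psi,m})_u|\le M$, Cauchy--Schwarz against the $\e$--dissipation in \eqref{Energy_est}, and Lemma \ref{lemma 3.2}, exactly as in the paper's terms $I_6$ and $I_7$.
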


\begin{proof} We divide the proof into five steps.

\smallskip
1. Let $(\etach,\qch)$ be an entropy pair corresponding to $\psi(s){}={}\frac{1}{2}s|s|$.
Define
\begin{eqnarray*}
&&\etat(\rho,m){}={}\etach(\rho,m){}-{}\grad_{(\rho,m)}\etach(\rhob,0)\cdot(\rho-\rb,m)\ge 0,\\
&&\qt(\rho,m){}={}\qch(\rho,m){}-{}\grad_{(\rho,m)}\etach(\rhob,0)\cdot(m,\frac{m^2}{\rho}{}+{}p).
\end{eqnarray*}
Note that the entropy pair $(\etach, \qch)$ is defined for system \eqref{eq:Eu} with
pressure $p{}={}\kappa\rho^\gamma$, rather than $p_\delta$.
Then $(\etat,\qt)$ is still an entropy pair of \eqref{eq:Eu}.

We multiply the continuity equation in \eqref{eq:NS} by $\etat_\rho r^{n-1}$,
the momentum equation \eqref{eq:NS} by $\etat_m r^{n-1}$, and then add them to obtain
\begin{eqnarray}
\label{eq:int_ii_1}
&&(\etat r^{n-1})_t{}+{}(\qt r^{n-1})_r{}
+{}(n-1)r^{n-2}\big(-\qch{}+{}m\etach_\rho{}+{}\frac{m^2}{\rho}\etach_m{}+{}\etach_m(\rhob,0) p(\rho)\big) \notag \\
&&{}={} \e r^{n-1}\Big((\rho_{rr}{}+{}\frac{n-1}{r}\rho_r)\etat_\rho{}+{}(m_r{}+{}\frac{n-1}{r}m)_r\etat_m\Big){}-{}(\delta\rho^2)_r\etat_mr^{n-1}.
\end{eqnarray}

\smallskip
2. It can be checked directly  that, for some constant $M=M(\gamma)>0$,
\begin{eqnarray}
\label{id:0.1}
&&\qt(\rho,m){}\geq{} \frac{1}{M}(\rho|u|^3{}+{}\rho^{\gamma+\theta})
{}-{}M(\rho{}+{}\rho|u|^2{}+{}\rho^\gamma),\\[2mm]
\label{id:0.2}&&-\qch{}+{}m(\etach_\rho{}+{}u\etach_m){}\leq{}0,\\[2mm]
&&|\etach_m|{}\leq{}M\big(|u|{}+{}\rho^\theta\big), \quad |\etach_\rho|{}\leq{}M \big(|u|^2{}+{}\rho^{2\theta}\big),\\[2mm]
&&|\etat|{}\leq{}M\big(\rho+\rho|u|^2+\rho^\gamma\big),
\quad  \rho|\etat_\rho+u\etat_m|{}\leq{}M\big(\rho+\rho|u|^2+\rho^\gamma\big),
\end{eqnarray}
and, for $\etach_\rho{}+{}u\etach_m$  considered as a function of $(\rho,u)$,
\begin{eqnarray}
|\big(\etach_\rho{}+{}u\etach_m\big)_\rho|{}\leq{}M\big(\rho^{\theta-1}|u|{}+{}\rho^{2\theta-1}\big),
\qquad |\big(\etach_\rho{}+{}u\etach_m\big)_u|{}\leq{} M\big(|u|{}+{}\rho^\theta\big).
\end{eqnarray}
Also see \cite{CP} for these inequalities.

\smallskip
Moreover, note that, at $r=b$,
\begin{equation}
\label{id:0.4}
\qt(\rhob,0){}={}\qch(\rhob,0){}={}c_0(\gamma)\rhob^{\gamma+\theta},
\quad \;|\etach_m(\rhob,0)|{}={}c_1(\gamma)\rhob^\theta,\;\quad
\etach_\rho(\rhob,0){}={}0,
\end{equation}
for some positive $c_i(\gamma), i=0,1$, depending only on $\gamma$.

\medskip
3. We integrate equation \eqref{eq:int_ii_1} over $(0,T)\times(r,b)$
to find
\begin{eqnarray}
\int_0^T\qt(\tau,r)r^{n-1}\,d\tau{}
&=&{}c(\theta)\rhob^{\gamma+\theta}b^{n-1}T{}+{}\int_r^b\left(\etat(T,y){}-{}\etat(0,y)\right)\,y^{n-1}dy \notag \\
{}&&+{}(n-1)\int_0^T\int_r^b\left( -\qch{}+{}m\etach_\rho{}+{}\frac{m^2}{\rho}\etach_m\right)\,y^{n-2}dyd\tau \notag \\
&&{}+{}(n-1)\int_0^T\int_r^by^{n-2} \etach_m(\rhob,0)\big(p(\rho)-p(\rhob)\big)\,dyd\tau  \notag\\
{}&&+{}\int_0^T\int_r^b\e y^{n-1}\Big( (\rho_{yy}+\frac{n-1}{y}\rho_y)\etat_\rho{}+{}(m_y{}+{}\frac{n-1}{y}m)_y\etat_m\Big)\,dyd\tau \notag \\
&&+ \int_0^T\int_r^b \delta\rho^2\big( (\etat_m)_\rho\rho_y{}+{}(\etat_m)_u u_y\big)y^{n-1}\,dyd\tau\notag \\
{}&&+{}(n-1)\int_0^T\int_r^b \delta \rho^2\etat_m y^{n-2}\,dyd\tau\notag \\
\label{eq:nash-1}
{}&=&{}I_1{}+{}\cdots{}+{}I_7.
\end{eqnarray}

4. Now we estimate the terms in \eqref{eq:nash-1}.
Clearly,
$$
|I_1|{}\leq{}M \rhob^{\gamma+\theta}b^{n-1}\leq M \rhob^{\gamma}b^n,
$$
since $\rhob<1$ and $b>1$ for small $\e>0$.

\medskip
Notice that $|\etat(\rho,m)|{}\leq{} \eta^*(\rho,m)$.
It then follows that
\begin{eqnarray*}
|I_2|& \leq &
\int_r^b|\etat(\rho(T,r),m(T,r))|\,r^{n-1}dr\\
&\leq &
\int_a^b \eta^*(\rho(T,r),m(T,r))\,r^{n-1}dr.
\end{eqnarray*}
By the energy estimate \eqref{est:energy_2}, $|I_2(t,r)|{}\leq{}E_0$.

\medskip
The term $I_3$ is nonpositive by \eqref{id:0.2} and can be dropped.

\medskip
Using Step 2, we have
\begin{equation}
|I_4(t,r)|{}\leq{} M(a_1,T)\big(1{}+{}\rhob^\gamma b^n\big) \qquad \mbox{for any}\,\, (t,r)\in[0,T]\times [a_1,b].
\end{equation}

\medskip
5. Consider $I_5$. We write
\begin{eqnarray*}
&&r^{n-1}(\rho_{rr}{}+{}\frac{n-1}{r}\rho_r)\etat_\rho{}={}(r^{n-1}\rho_r)_r\etat_\rho,\\
&&r^{n-1}(m_r{}+{}\frac{n-1}{r}m)_r\etat_m{}={}(r^{n-1}m_r)_r\etat_m{}-{}(n-1)r^{n-3}m\etat_m,
\end{eqnarray*}
and employ integration by parts (note that $\etat_\rho(\rhob,0){}={}\etat_m(\rhob,0){}={}0$) to obtain
\begin{eqnarray}
I_5{}&=&{}-\e\int_0^t\int_r^b\big(\rho_y (\etat_\rho)_y{}
+{}m_y(\etat_m)_y\big)\,y^{n-1}dyd\tau{}
-{}(n-1)\e\int_0^t\int_r^b m\etat_m\,y^{n-3}dyd\tau \notag \\
& &+ \e \int_0^t\etat_r(\tau,r)\,r^{n-1}d\tau\notag\\
&=& J_1{}+{}J_2{}+{}J_3.
\end{eqnarray}

Using the energy estimate \eqref{Energy_est}
and  Lemma \ref{Energy_control_lemma}, we have
$$
|J_1(t,r)|{}\leq{}M E_0.
$$

Also, using Step 2 and \eqref{id:0.4}, we have
\[
|m\etat_m|{}
\leq{}M\big(\rho|u|^2{}+{}\rho^\gamma{}+{}\rho|\etat_m(\rhob,0)|\big){}
\leq{} M\big(\eta^*(\rho,m){}+{}\rhob^{2\theta}\rho\big).
\]
It follows by the energy estimate \eqref{Energy_est} that
\[
\big|\int_a^b J_2\, \omega\, dr\big|{}\leq{} M(\supp\, \omega,T)\big(1{}+{}\rhob^\gamma b^n\big),
\]
for any nonnegative smooth function $\omega$
with $\supp \, \omega\subset (a, b)$.

We write
\[
\etat_r{}={}\rho_r\etat_\rho{}+{}m_r\etat_m{}={}\rho_r(\etat_\rho+u\etat_m){}+{}\rho\etat_mu_r.
\]
Then we consider the integral
\begin{eqnarray*}
\int_a^b J_3\,\omega\,dr
{}&=&{} \e\int_{Q_T}\rho\big(\etat_\rho{}+{}u\etat_m\big) \omega_r \, r^{n-1}drd\tau
{}-{}(n-1)\e\int_{Q_T}\rho\big(\etat_\rho{}+{}u\etat_m\big)\omega\,r^{n-2}drd\tau \notag \\
{}&&-{} \e\int_{Q_T} \rho \big(\rho_r(\etat_\rho+u\etat_m)_\rho{}+{}
u_r(\etat_\rho+u\etat_m)_u{}-{}\etat_mu_r\big)\omega\,r^{n-1}drd\tau.
\end{eqnarray*}

Noticing that $\etat_\rho{}+{}u\etat_m{}={}\etach_\rho+u\etach_m{}+{}const.$ and
using  Step 2 and estimates \eqref{Energy_est}--\eqref{est:energy_2}
and \eqref{3.8a},
we obtain
\begin{eqnarray*}
\big| \int_a^b J_3(t,r)\,\omega \,dr\big|
\leq M(a_1,T,\|\omega\|_{C^1})
{}+{}\frac{1}{2}\int_{Q_T}\big(\rho|u|^3{}+{}\rho^{\gamma+\theta}\big)\omega\,r^{n-1}drd\tau.
\end{eqnarray*}

To estimate $I_6$,
employing that $|(\etat_m)_\rho|\leq M\rho^{\theta-1}$,  $|(\etat_m)_u|{}\leq{} M$,
and the energy estimate \eqref{Energy_est}, we have
\[
|I_6|{}\leq{} M\frac{\delta^2}{\e}\int_0^T\int_r^b \rho^3\,r^{n-1}drd\tau{}
\leq{}M\frac{\delta^2}{\e^2}b^n\leq{}M\frac{\delta}{\e}b^n,
\]
where we have used the result of Lemma \ref{lemma 3.2}
and $\frac{\delta}{\e}<1$ for small $\e>0$  in the last inequality.

\medskip
The last term $I_7$ is estimated in the similar fashion:
\[
\big|\int_a^b I_7\,\omega \, dr\big|{}\leq{} M(\supp\, \omega)\frac{\delta^2}{\e}b^n
\leq{} M(\supp\, \omega)\frac{\delta}{\e}b^n,
\]
since $\delta<1$ for small $\e>0$.

Finally, we multiply equation \eqref{eq:nash-1} by the nonnegative smooth function $\omega$,
integrate it over $(a,b)$, and use estimate \eqref{id:0.1}, together with
the above estimates for $I_j, j=1,\cdots, 7$, and an appropriate choice of $\delta$  to obtain
\begin{eqnarray*}
&&\int_{Q_t}\big(\rho|u|^3+\rho^{\gamma+\theta}\big)\,\omega\,r^{n-1}drd\tau\\
&& \leq M\big(1+\rhob^\gamma b^n {}+{}\frac{\delta}{\e}b^n\big)
+ \frac{1}{2} \int_{Q_t}\big(\rho|u|^3{}+{}\rho^{\gamma+\theta}\big)\,\omega\,r^{n-1}drd\tau.
\end{eqnarray*}
This completes the proof.
\end{proof}
\end{subsection}

\smallskip
\begin{subsection}{Weak Entropy Dissipation Estimates}
Let $a=a(\e)\to0$ and $b=b(\e)\to \infty$.
We choose $\rhob{}={}\rhob(\e){}\to{}0$ and $\delta{}={}\delta(\e){}\to{}0$ such that
\begin{equation}
\label{size delta}
\rhob^\gamma b^n+
\frac{\delta}{\e}b^{n}   {}\leq{} M
 \qquad \mbox{uniformly in $\e$}.
\end{equation}
With this choice $(\rhob,\delta)$,
the estimates on the lemmas in \S 3.1 are uniform in $\e{}\to{}0$.

Given a sequence of the initial data functions as in Theorem \ref{main},
denote $(\rho^\e,m^\e)$ by the corresponding solution of the viscosity equations \eqref{eq:NS}
on $Q^\e{}={}[0,\infty)\times[a(\e),b(\e)]$ with $\rhob=\rhob(\e)$ as above.

\begin{proposition}
Let $(\eta,q)$ be an entropy pair of system \eqref{eq:Eu} with form \eqref{eta}--\eqref{q}
for a smooth, compactly supported function $\psi(s)$ on $\mathbb{R}$.
Then the entropy dissipation measures
\begin{equation}\label{dissipation-measures}
\eta(\rho^\e,m^\e)_t + q(\rho^\e,m^\e)_r
\qquad \mbox{are compact in }\,\, H_{loc}^{-1}.
\end{equation}
\end{proposition}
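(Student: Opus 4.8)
The strategy is the classical div-curl/compensated-compactness argument for proving $H^{-1}_{\rm loc}$-compactness of entropy dissipation measures for viscous approximations, adapted to the spherically symmetric geometry with the geometric source terms and the artificial-pressure term $\delta\rho^2$. Fix a compactly supported test function $\omega$ on $(a,b)$ and work on a fixed compact $K\subset(a,b)$ and a time interval $[0,T]$; all constants $M$ below are independent of $\e$ by the uniform estimates of \S 3.1 together with \eqref{size delta}. Multiplying the two equations in \eqref{eq:NS} by $\eta_\rho r^{n-1}$ and $\eta_m r^{n-1}$ respectively and adding, one obtains the identity
\[
(\eta(\rho^\e,m^\e))_t+(q(\rho^\e,m^\e))_r = \e(\eta_\rho,\eta_m)\!\cdot\!\Big(r^{-(n-1)}(r^{n-1}\rho_r)_r,\ \big(r^{-(n-1)}(r^{n-1}m)_r\big)_r\Big)^{\!\top}-\frac{n-1}{r}\big(m\eta_\rho+\tfrac{m^2}{\rho}\eta_m-q\big)-(\delta\rho^2)_r\,\eta_m .
\]
The right-hand side is to be decomposed into three groups: (a) the viscous terms of order $\e$, (b) the geometric source terms, and (c) the $\delta$-pressure correction.

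The key steps, in order, are as follows. First, rewrite the viscous group as $\e\,\Delta_r \eta + (\text{lower order})$, more precisely using $\eta_\rho\,(r^{n-1}\rho_r)_r r^{-(n-1)}+\eta_m(r^{-(n-1)}(r^{n-1}m)_r)_r = \big(\eta_r\big)_r^{\text{(principal)}}-(\rho_r,m_r)\grad^2\eta(\rho_r,m_r)^\top + (\text{geometric }\e\text{ terms})$. The leading piece $\e\partial_r(\text{something bounded in }L^2_{\rm loc})$ converges to $0$ strongly in $H^{-1}_{\rm loc}$ because $\e\to 0$ and, by Lemma \ref{Energy_control_lemma} together with the energy estimate \eqref{Energy_est}, $\e^{1/2}\|(\rho_r,m_r)\|$ is controlled. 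The quadratic term $\e(\rho_r,m_r)\grad^2\eta(\rho_r,m_r)^\top$ is bounded in $L^1_{\rm loc}$ (again by Lemma \ref{Energy_control_lemma}, which dominates it by $M_\psi$ times $\e(\rho_r,m_r)\grad^2\eta^*(\rho_r,m_r)^\top$, whose space-time integral is $\le M_\psi E_0$), hence lies in a bounded set of measures; a small residual $\e$-order geometric piece is likewise $O(\e)$ in $L^1_{\rm loc}$. Second, the geometric source $\frac{n-1}{r}(m\eta_\rho+\frac{m^2}{\rho}\eta_m-q)$ is, on $K$ (bounded away from $r=0$), controlled pointwise by $M(\rho+\rho|u|^2+\rho^\gamma)$ via the growth bounds for $(\eta,q)$ of the type recorded in Step 2 of Lemma \ref{HI:2}; by the energy estimate \eqref{Energy_est} and Lemma \ref{claim:local_integrability} this is bounded in $L^1_{\rm loc}$. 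Third, $(\delta\rho^2)_r\eta_m = \delta\rho^2\big((\eta_m)_\rho\rho_r+(\eta_m)_u u_r\big)+\tfrac{?}{}$ — here I integrate by parts to move the $r$-derivative, writing $(\delta\rho^2)_r\eta_m = \partial_r(\delta\rho^2\eta_m) - \delta\rho^2(\eta_m)_r$; the first term is $\partial_r$ of an $L^1_{\rm loc}$ (in fact better) function and the second is bounded in $L^1_{\rm loc}$ using $|(\eta_m)_\rho|\le M\rho^{\theta-1}$, $|(\eta_m)_u|\le M$, the energy bound on $\e^{1/2}\|(\rho_r,u_r)\|_{L^2}$, and the bound $\delta^2\e^{-1}\int\rho^3 \le M\delta\e^{-1}b^n\le M$ from Lemma \ref{lemma 3.2} and \eqref{size delta}.

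The final step is the standard reduction: we have written $\eta_t+q_r = F^\e_1 + \partial_r F^\e_2$ where $F^\e_1$ is bounded in $\mathcal M(K\times[0,T])$ (measures) and $F^\e_2\to 0$ in $L^2_{\rm loc}$ (or at least is bounded in some $L^{p}_{\rm loc}$, $p>1$, and $\to0$), plus possibly an extra term bounded in $W^{-1,p}_{\rm loc}$ for some $p>2$. By Murat's lemma (the compact embedding $\mathcal M\hookrightarrow\hookrightarrow W^{-1,q}_{\rm loc}$ for $q<\tfrac{d}{d-1}=2$ in two space-time dimensions, combined with boundedness in $W^{-1,p}_{\rm loc}$ for $p>2$ which follows from the uniform $L^p_{\rm loc}$-bounds on $\eta(\rho^\e,m^\e),q(\rho^\e,m^\e)$ provided by Lemmas \ref{lemma:3.2} and \ref{HI:2} for compactly supported smooth $\psi$), the interpolation compactness lemma yields that $\eta_t+q_r$ is precompact in $H^{-1}_{\rm loc}$. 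The main obstacle is Step 3: carefully bookkeeping the $\delta\rho^2$ correction so that every piece is either a strongly vanishing divergence or uniformly $L^1_{\rm loc}$-bounded, which forces the precise rate $\delta/\e\le M b^{-n}$ — exactly the constraint \eqref{epsilon-delta}–\eqref{size delta} — and requires the $\rho^3$ space-time bound of Lemma \ref{lemma 3.2}; the $\e$-order viscous remainder and the uniform $W^{-1,p}$, $p>2$, bound (which rests on Lemma \ref{HI:2} and thus again on \eqref{size delta}) are the other points demanding care.
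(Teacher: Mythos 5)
Your overall architecture (decompose the dissipation into geometric source terms, the $\e$-quadratic Hessian term, an exact $r$-derivative of order $\e$, and the $\delta\rho^2$ correction; bound the first, second and fourth groups in $L^1_{loc}$; combine with the uniform $W^{-1,p}_{loc}$, $p>2$, bound from the $L^{\gamma+1}$ estimates via the interpolation compactness lemma) is exactly the paper's, and your treatment of the geometric source, of the quadratic term via Lemma \ref{Energy_control_lemma}, and of the $\delta$-correction via Lemma \ref{lemma 3.2} and \eqref{size delta} is sound. However, there is a genuine gap in your handling of the principal viscous term $\e\eta_{rr}=\partial_r(\e\eta_r)$. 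You assert that ``$\e^{1/2}\|(\rho_r,m_r)\|$ is controlled'' by the energy estimate together with Lemma \ref{Energy_control_lemma}. This is not true: the energy estimate \eqref{Energy_est} controls only the \emph{degenerately weighted} quantities $\e\int h''_\delta(\rho)|\rho_r|^2$ and $\e\int\rho|u_r|^2$, and the weight $h''_\delta(\rho)\sim 2\delta+\kappa\gamma\rho^{\gamma-2}$ gives no uniform control of $\e\int|\rho_r|^2$ near vacuum when $\gamma>2$ (nor where $\rho$ is large when $\gamma<2$, since no uniform $L^\infty$ bound on $\rho^\e$ is available). Lemma \ref{Energy_control_lemma} compares Hessian quadratic forms and controls the term $\e(\rho_r,m_r)\grad^2\eta(\rho_r,m_r)^\top$, not the first-order quantity $\e\eta_r=\e\big(\rho_r(\eta_\rho+u\eta_m)+\rho\eta_m u_r\big)$.

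Closing this gap is in fact the longest step of the paper's proof: one must prove the vacuum estimate
$\int_0^T\int_K\e^{3/2}|\rho^\e_r|^2\,drdt\le M\big(\sqrt{\e}\,\Delta^{\gamma/2}+\Delta+\e\big)$
by testing the continuity equation with $\phi'(\rho)\omega$ for a truncation $\phi$ that is quadratic on $\{\rho<\Delta\}$ and linear on $\{\rho\ge\Delta\}$ (the argument adapted from Lions--Perthame--Souganidis), and then combine this with the energy estimate and the $L^{\gamma+1}_{loc}$ bound of Lemma \ref{lemma:3.2} to conclude that $\e\eta^\e_r\to0$ in $L^{p}_{loc}$ for the specific exponent $p=2-\frac{2}{\gamma+1}\in(1,2)$. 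Only then does $\e\eta^\e_{rr}\to0$ in $W^{-1,p}_{loc}$ for some $p\in(1,2)$, which is what feeds into the Murat interpolation step. Without this additional argument your decomposition does not yield a vanishing divergence term, and the proof is incomplete.
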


\begin{proof} We divide the proof into seven steps.

\medskip
1. Denote $\eta^\e{}={}\eta(\rho^\e,m^\e)$, $q^\e{}={}q(\rho^\e,m^\e)$, and $m^\e{}={}\rho^\e u^\e$.
We compute
\begin{eqnarray}
\label{approx_entropy}
\eta^\e_t{}+{}q^\e_r
{}&=&{}-\frac{n-1}{r}\rho u^\e\left(\eta^\e_{\rho} +u^\e\eta^\e_m \right)
{}+{} \e\frac{n-1}{r}\Big(\rho^\e_r\eta^\e_{\rho}{}+{}r\big(\frac{1}{r}m^\e\big)_r\eta^\e_m \Big) \notag \\[2mm]
{}&&-{}\e\big(\rho^\e_r(\eta^\e_\rho)_r{}+{} m^\e_r(\eta^\e_m)_r\big){}+{}\e\eta^\e_{rr}{}-{}
(\delta\rho^2)_r\eta^\e_m\notag \\[2mm]
{}&=&{}I^\e_1{}+\cdots +{}I^\e_5.
\end{eqnarray}

\smallskip
2. We notice that
\begin{equation}
|I^\e_1(t,r)|{}\leq{}M\rho^\e|u^\e|\big(1+(\rho^\e)^\theta\big)
{}\leq{}M\big(\rho^\e|u^\e|^2+\rho^\e{}+{}(\rho^\e)^\gamma\big),
\end{equation}
bounded in $L^1\left(0,T; L^1_{loc}(0,\infty)\right)$, independent of $\e$
(all of the functions are extended by $0$ outside $(a,b)$).

\medskip
3. Next,
\begin{eqnarray}
I^\e_2{}={}\e\frac{n-1}{r^2}\big(\eta^\e-m^\e\eta^\e_m\big) + \e\big(\frac{n-1}{r}\eta^\e \big)_r
=: I_{2a}^\e+I_{2b}^\e.
\end{eqnarray}

Since
\[
|\eta^\e{}-{}m^\e\eta^\e_m|{}\leq{}M \big(\rho^\e{}+{}\rho^\e|u^\e|^2\big),
\]
then
\begin{equation}\label{3.26a}
I_{2a}^\e \to 0  \qquad \mbox{in $L^1_{loc}(\R_+^2)$ as $\e\to 0$}.
\end{equation}
On the other hand, if $\omega$ is smooth and compactly supported on $\mathbb{R}^2_+$, then
\begin{eqnarray}
\e\left|\int_{Q^\e} I_{2b}^\e\omega(t,r)\,drdt\right|
&=&\e\left|\int\frac{n-1}{r}\eta^\e\omega_r\,drdt\right|\notag\\
&\leq& \e M(\supp\,\omega)\|\rho^\e\|_{L^{\gamma+1}(\supp\,\omega)}\|\omega\|_{H^1(\mathbb{R}^2_+)}.
\notag
\end{eqnarray}
Since $\|\rho^\e\|_{L^{\gamma+1}(\supp\,\omega)}$ is bounded, independent of $\e$ (see \eqref{HI_est_1}),
the above estimate shows that
\begin{equation}\label{3.26b}
I_{2b}^\e{}\to{}0\qquad  \mbox{in  $H^{-1}_{loc}(\mathbb{R}^2_+)$ as $\e\to 0$}.
\end{equation}

\medskip
4. For $I_3^\e$, we use Lemma \ref{2.1} to obtain
\begin{eqnarray}
|I_3^\e|&=&\e |\langle \nabla^2\eta(\rho^\e, m^\e)(\rho_r^\e,m^\e_r), (\rho_r^\e,m^\e_r)\rangle|\notag\\
&\le &  M_\psi\, \e \langle \nabla^2\bar{\eta}^*(\rho^\e, m^\e)(\rho_r^\e,m^\e_r), (\rho_r^\e,m^\e_r)\rangle.
\notag \label{I-3a}
\end{eqnarray}

Combining \eqref{I-3a} with Proposition \ref{2.1} and Lemma \ref{Energy_control_lemma}, we
conclude that
\begin{equation}\label{I-3b}
I_3^\e  \qquad \mbox{is uniformly bounded in $L^1(0,T; L^1_{loc}(0,\infty))$}.
\end{equation}

\medskip
5. To show that $I^\e_4{}\to{}0$ in $H^{-1}_{loc}$ as $\e\to 0$,
we need the following claim, adopting the arguments from \cite{LPS}.

{\it Claim}:
{\it Let $K\subset (0,\infty)$ be a compact subset. Then, for any $0<\Delta<1$ and $\e>0$,
\begin{equation}
\label{small_rho_est_1}
\int_0^T\int_K\e^{\frac{3}{2}}|\rho^\e_r|^2\,drdt{}\leq{}M\big(\sqrt{\e}\Delta^{\frac{\gamma}{2}}{}+{}\Delta+\e\big).
\end{equation}
In particular,
$$
\int_0^T\int_K\e^{\frac{3}{2}}|\rho^\e_r|^2\,drdt{}\to{}0,
$$
and
$$
\e\eta^\e_r{}\to{}0 \qquad\,\,\,\mbox{in}\,\, L^p(0,T;L^p_{loc}(0,\infty)) \quad\, \mbox{for}\,\,
p:=2-\frac{2}{\gamma+1}\in{}(1,2).
$$
}

Now we prove the claim.
For the simplicity of notation, we suppress superscript $\e$ in all of the functions.
Define
\[
\phi(\rho){}={}\left\{\begin{array}{ll}
\frac{\rho^2}{2}, & \rho<\Delta,\\[2mm]
\frac{\Delta^2}{2}{}+{}\Delta(\rho-\Delta), &\rho\geq\Delta,
\end{array}
\right.
\]
so that
\begin{eqnarray*}
&&\phi''(\rho){}={}\chi_{\{\rho<\Delta\}}(\rho),\\
&&\rho\phi'(\rho)-\phi(\rho){}={}\frac{\rho^2}{2} \qquad \mbox{for $\rho<\Delta$},\\
&&\rho\phi'(\rho)-\phi(\rho){}={}\frac{\Delta^2}{2} \qquad\mbox{for $\rho\geq\Delta$,}
\end{eqnarray*}
where $\chi_{A}(\rho)$ is the indicator function that is $1$ when $\rho\in A$ and $0$ otherwise.

\smallskip
Let $\omega(r)$ be a nonnegative smooth, compactly supported function on $(0,\infty)$.
We compute from the continuity equation, the first equation, in \eqref{eq:NS}:
\begin{eqnarray}
&& (\phi\omega)_t+(\phi u \omega)_r{}-{}\phi u\omega_r{}
-{}\frac{1}{2}\big(\rho^2\chi_{\{\rho<\Delta\}}{}+{}\delta^2\chi_{\{\rho>\Delta\}} \big)\omega u_r{}
+{}\frac{n-1}{r}\rho u\min\{\rho,\Delta\} \notag \\
&&=\e(\phi'\omega\rho_r)_r{}-{}\e\min\{\rho,\Delta\}\omega'\rho_r
{}+{}\frac{(n-1)\e}{r}\omega\min\{\rho,\Delta\}\rho_r{}
-{}\e\omega|\rho_r|^2\chi_{\{\rho<\Delta\}}. \label{3.32}
\end{eqnarray}
Integrating \eqref{3.32} over $(0,T)\times(0,\infty)$, we obtain
\begin{eqnarray}
&&\int_0^T\int\e\omega|\rho_r|^2\chi_{\{\rho<\Delta\}}\,drdt\notag\\
&&={}-\int \phi\omega\big|^T_0\,dr{}+{}\int_0^T\int\phi u\omega_r\,dr dt \notag \\
&&\quad +\frac{1}{2}\int_0^T\int \big(\rho^2\chi_{\{\rho<\Delta\}}{}+{}\delta\chi_{\{\rho>\Delta\}}\big)\omega u_r\,drdt{}
-{}\int_0^T\int \frac{n-1}{r}\rho u\min\{\rho,\Delta\}\,drdt \notag \\
&&\quad -\int_0^T\int\e\min\{\rho,\Delta\}\omega'\rho_r\,drdt{}
+{}\int_0^T\int\frac{(n-1)\e}{r}\omega\min\{\rho,\Delta\}\rho_r\,drdt\notag\\
&&={}J_1{}+ \cdots +{}J_6.
\end{eqnarray}

We estimate the integrals on the right:
\begin{eqnarray}
|J_1|{}\leq{}M(\supp\,\omega)\Big(\Delta^2+\Delta\int_0^T\int_{\supp\,\omega}\rho\,drdt\Big)
{}\leq M(\supp\,\omega,T)\Delta;
\end{eqnarray}
\begin{eqnarray}
|J_2|{}&\leq&{}\int_0^T\int_{\supp\,\omega}\big(\Delta|\rho u|\chi_{\{\rho<\Delta\}}{}
+{}(\Delta^2+\Delta\rho)|u|\chi_{\{\rho>\Delta\}}\big)\,drdt \notag \\
{}&\leq&{} \Delta\int_0^T\int_{\supp\,\omega}\big(\rho{}+{}\rho|u|^2\big)dtdt{}\notag\\
{}&\leq&{}M(\supp\,\omega,T)\Delta;
\end{eqnarray}
\begin{eqnarray}
|J_3|{}\leq{}\frac{\Delta^{\frac{3}{2}}}{\sqrt{\e}}\int_0^T\int_{\supp\,\omega} \big(\rho{}+{}\e\rho|u_r|^2\big)\,drdt
{}\leq{}M(\supp\,\omega,T)\frac{\Delta}{\sqrt{\e}};
\end{eqnarray}
\begin{eqnarray}
|J_4|{}\leq{}M(\supp\,\omega)\Delta\int_0^T\int_{\supp\,\omega}\big(\rho{}+{}\rho|u|^2\big)\,drdt
{}\leq M(\supp\,\omega,T)\Delta;
\end{eqnarray}
\begin{eqnarray}
|J_5|{}&\leq&{}\sqrt{\e} \Delta^{\frac{\gamma}{2}}\int_0^T\int_{\supp\,\omega}
\sqrt{\e}\rho^{\frac{\gamma-2}{2}}|\rho_r|\,drdt{}+{}\e\int_0^T\int_{\supp\,\omega}\rho |\rho_r|\chi_{\{\rho<\Delta\}}\omega'\,drdt \notag\\[2mm]
{}&\leq&{}
\frac{\e}{4}\int_0^T\rho^{\gamma-2}|\rho_r|^2\omega\,drdt{}+{}2\e\int_0^T\int_{\supp\,\omega}\rho^2\frac{|\omega'|^2}{\omega}\,drdt
{}+{}\sqrt{\e}\Delta^{\frac{\gamma}{2}}M(\supp\,\omega,T) \notag \\[2mm]
{}&\leq&{} \frac{\e}{4}\int_0^T\int|\rho_r|^2\omega\,drdt{}+{}\e M(\supp\,\omega,T) \notag\\[2mm]
  &&+ \sqrt{\e}\Delta^{\frac{\gamma}{2}}M(\supp\,\omega, T).
\end{eqnarray}
Moreover, $J_6$ is estimated in the same way as $J_5$.
Thus, estimate \eqref{small_rho_est_1} is proved.

Now we prove the second part of the claim.

\smallskip
Notice that
\begin{eqnarray}
|\eta_r|{}\leq{}M\big(|\rho_r||\eta_\rho+u\eta_m|{}+{}\rho |u_r|\big)
{}\leq{}M\big(|\rho_r|(1+\rho^\theta){}+{}\rho|u_r|\big).
\end{eqnarray}
Let $q\in (1,2)$ to be chosen later on. Compute
\begin{eqnarray}
\int_0^T\int_K \e^q|\eta_r|^q\,drdt
{}&\leq&{}M\int_0^T\int_K\e^q|\rho_r|^q\,drdt{}+{}
\int_0^T\int_K\e^q\big||\rho_r|\rho^\theta{}+{}\rho|u_r|\big|^q\,drdt \notag \\[2mm]
{}&\leq&{} \Delta{}+{}\frac{M}{\Delta}\int_0^T\int_K\e^{2q}|\rho_r|^2\,drdt \notag \\[2mm]
&&{}+{}M
\int_0^T\int_K\e^p\rho^{\frac{q}{2}}\big(|\rho^{\frac{\gamma-2}{2}}\rho_r|^q
{}+{}|\rho^{\frac{1}{2}}u_r|^q\big)\,drdt \notag \\[2mm]
{}&\leq&{}\Delta{}+{}\frac{M}{\Delta}\int_0^T\int_K\e^{\frac{3}{2}}|\rho_r|^2\,drdt \notag\\[2mm]
&&{}+{}\e^{q-1}M\int_0^T\int_K\big(\e(\rho^{\gamma-2}|\rho_r|^2{}+{}\rho|u_r|^2){}+{}\e\rho^{\frac{q}{2-q}}\big)\,drdt \notag \\[2mm]
{}&\leq&{}\Delta{}+{}\frac{M}{\Delta}\int_0^T\int_K\e^{\frac{3}{2}}|\rho_r|^2\,drdt{}+{}\e^{q-1}C(T,K),
\end{eqnarray}
provided that
$\frac{2}{2-q}={}\gamma+1$, which holds if and only if $q{}={}2 - \frac{2}{\gamma+1}$.
Combining this with estimate \eqref{small_rho_est_1}, we arrive at the conclusion of the claim.

\smallskip
6. Consider the last term $I_5^\e$. This term is bounded in $L^1(0,T\,:\, L^1_{loc}(0,\infty))$.
Indeed, for a compact set $K\subset(0,\infty)$,
using the energy estimates \eqref{Energy_est} and Lemma \ref{lemma 3.2}, we obtain
\begin{eqnarray*}
\int_0^T\int_K|I_5|\,drdt{}&\leq&{}M_\psi\int_0^T\int_K \delta\rho|\rho_r|\,drdt\\
&\leq& M(\psi,K)\Big(1 {}+{}\frac{\delta^2}{\e}\int_0^T\int_K \rho^{4-\gamma}\,drdt\Big)\\
&\leq& M(\psi,K)\Big(1{}+{}\frac{\delta^2}{\e} +\frac{\delta^2}{\e}\int_0^T\int_K\rho^3\,drdt\Big)\\
&\leq& M(\psi,K)\Big(1{}+{}\frac{\delta^2}{\e} +\frac{\delta^2}{\e}b^n\Big).
\end{eqnarray*}
From the choice of $\delta$, the term on the right is uniformly bounded in $\e$.

\smallskip
7. Combining Steps 1--6, we conclude
\begin{equation}
\eta(\rho^\e, m^\e)_t{}+{}q(\rho^\e, m^\e)_r{}={}f^\e{}+{}g^\e,
\end{equation}
where $f^\e$ is bounded in $L^1\left(0,T; L^1_{loc}(0,\infty)\right)$ and $g^\e{}\to{}0$ in
$W^{-1,q}_{loc}(\mathbb{R}^2_+)$ for some $q{}\in{}(1,2)$.
This implies that, for $1<q_1<2$,
\begin{equation}\label{h-3}
\eta(\rho^\e, m^\e)_t{}+{}q(\rho^\e, m^\e)_r{}
\quad\mbox{are confined in a compact subset of }\,\,
W^{-1,q_1}_{loc}.
\end{equation}

\smallskip
On the other hand, using formulas \eqref{eta}--\eqref{q}
and the estimates in Proposition \ref{2.1} and Lemma \ref{HI:2},
we obtain that, for any smooth, compactly supported function $\psi(s)$ on $\R$,
$$
\eta(\rho^\e,m^\e),\,q(\rho^\e,m^\e) \qquad \mbox{are
uniformly bounded in }  L^{q_2}_{loc}(\R_+^2),
$$
for $q_2=\gamma+1>2$ when $\gamma>1$.
This implies that, for some $q_2>2$,
\begin{equation}\label{h-4}
\eta(\rho^\e, m^\e)_t{}+{}q(\rho^\e, m^\e)_r{}
\qquad\mbox{are uniformly bounded in  }\,\, W^{-1,q_2}_{loc}.
\end{equation}

The interpolation compactness theorem ({\it cf.} \cite{Chen1,DCL})
indicates that, for $q_1>1$, $q_2\in(q_1, \infty]$, and $q_0\in [q_1,
q_2)$,
\[
\begin{array}{l}
\big(\mbox{compact set of}\: W^{-1,q_1}_{loc}(\R_+^2)\big)
\cap \big(\mbox{bounded set of}\: W^{-1,q_2}_{loc}(\R_+^2)\big)\\[1mm]
\subset \big(\mbox{compact set of}\: W^{-1,q_0}_{loc}(\R_+^2)\big),
\end{array}
\]
which is a generalization of Murat's lemma in \cite{Murat,Tartar}.
Combining this interpolation compactness theorem for $1<q_1<2,
q_2>2$, and $q_0=2$ with the facts in \eqref{h-3}--\eqref{h-4}, we
conclude the result.
\end{proof}
\end{subsection}

\begin{subsection}{Strong Convergence and the Entropy Inequality}

The {\it a priori} estimates and compactness properties we have obtained in \S 3.1--\S 3.2
imply that the viscous solutions satisfy the compensated compactness
framework in Chen-Perepelitsa \cite{CP}.
Then the compactness theorem established in \cite{CP} for the case $\gamma>1$  (also see LeFloch-Westdickenberg \cite{LW})
yields that
\[
(\rho^\e,m^\e){}\to{}(\rho,m) \qquad \mbox{a.e. $(t,r)\in\mathbb{R}^2_+\quad $ in $L^{p}_{loc}\left(\mathbb{R}^2_+\right)\times L^{q}_{loc}\left(\mathbb{R}^2_+\right)$}
\]
for $p\in [1,\gamma+1)$ and $q\in [1, \frac{3(\gamma+1)}{\gamma+3})$.
This requires the uniform bounds \eqref{HI_est_1}--\eqref{HI:2} and the estimate:
\[
|m|^q{}={}\rho^{\frac{q}{3}}|u|^q \rho^{\frac{2q}{3}}{}\leq{}\rho|u|^3{}+{}\rho^{\gamma+1}
\]
for $q{}={}\frac{3(\gamma+1)}{\gamma+3}$.

From the same estimates, we also obtain the convergence of the energy as $\e\to 0$:
$$
\eta^*(\rho^\e, m^\e)\to \eta^*(\rho, m)
\qquad \mbox{in $L^{1}_{loc}\left(\mathbb{R}^2_+\right)$}.
$$
Since the energy $\eta^*(\rho,m)$ is a convex function,
by passing to the limit in  \eqref{est:energy_2}, we obtain
\[
\int_{t_1}^{t_2} \int_0^\infty
\eta^*(\rho, m)
(t,r)\,r^{n-1}drdt{}\leq{}
(t_1-t_2)\int_0^\infty  \eta^*(\rho_0, m_0)
(r)\,r^{n-1}dr,
\]
which implies that, for {\it a.e.} $t\ge 0$,
\begin{equation}\label{energy-control-a}
\int_{\R_+}
\eta^*(\rho, m)
(t,r)\,r^{n-1}dr
{}\leq{}
\int_0^\infty  \eta^*(\rho_0, m_0)
(r)\,r^{n-1}dr.
\end{equation}
 This implies that there is no concentration formed in the density $\rho$ at the
origin $r=0$.

Furthermore, we multiply both sides of \eqref{eq:energy-2} by a smooth function $\varphi(t)\in C_0^1(\R_+)$ with $\varphi(0)=0$,
integrate it over $\R_+^2$, and pass to the limit $\e\to 0$ to obtain
$$
\int_{\R_+^2}\eta^*(\rho, m)\varphi'(t) \, dr dt \ge 0,
$$
which, together with \eqref{energy-control-a}, concludes \eqref{finite_energy}.

Finally, the energy estimates \eqref{Energy_est}--\eqref{est:energy_2} and the estimates
in Lemmas \ref{HI_est_1}--\ref{HI:2}
imply the equi-integrability of a
sequence of
$$
\eta_\psi^\e, \quad q_\psi^\e, \quad m^\e\partial_\rho\eta_{\psi}^\e,\quad
\frac{(m^\e)^2}{\rho^\e}\partial_m\eta_{\psi}^\e, \quad q_\psi^\e,
$$
for any $\psi(s)$ that is convex with subquadratic growth at infinity:
$\lim_{s\to\infty}\frac{|\psi(s)|}{s^2}=0$.

Passing to the limit in \eqref{approx_entropy} multiplied by $r^n$ and integrated against
a smooth compactly function supported on $(0,\infty)\times(0,\infty)$, we obtain \eqref{entropy_sol}.
\end{subsection}

\begin{subsection}{Limit in the Equations}

Let $\varphi(t,r)$ be a smooth, compactly supported function on $[0,\infty)\times[0,b(\e))$,
with $\varphi_r(t,r){}={}0$ for all $r$ close to $0$.
Assume that the viscosity solutions $(\rho^\e,$ $m^\e)$ are extended by $0$ outside of $[a(\e),b(\e)]$.
Multiplying the first equation
in \eqref{eq:NS} by $r^{n-1}\varphi$ and then integrating it over $\mathbb{R}^2_+$, we have
\begin{eqnarray*}
&&
\int_{\mathbb{R}^2_+} \Big(\rho^\e\varphi_t{}+{}m^\e\varphi_r{}+{}\e\rho^\e(\varphi_{rr}
  {}+{}\frac{n-1}{r}\varphi_r)\Big)\,r^{n-1}drdt{}\notag\\
&&\,\,\, +{}\int_{\mathbb{R}_+}\rho^\e_0(r)\varphi(0,r)\,r^{n-1}dr{}={}0.
\label{weak_form_eq_1}
\end{eqnarray*}
Note that, by the energy inequality, $\int_0^1(\rho^\e)^\gamma\,r^{n-1}dr$
is bounded, independent of $\e$, which implies that there is no concentration of mass at $r{}={}0.$

\smallskip
Passing to the limit in the above equation, we deduce
\begin{eqnarray*}
\int_{\mathbb{R}^2_+} \big(\rho \varphi_t{}+{}m\varphi_r\big){}\,r^{n-1}drdt
{}+{}\int_{\mathbb{R}_+}\rho_0(r)\varphi(0,r)\,r^{n-1}dr{}={}0,
\end{eqnarray*}
which can be extended to hold for all smooth, compactly supported function $\varphi(t,r)$
on $[0,\infty)\times[0,\infty)$, with $\varphi_r(t,0){}={}0$.

\smallskip
Consider now the momentum equation in \eqref{eq:Eu}. Let $\varphi(t,r)$ be
a smooth, compactly supported function on $[0,\infty)\times(a(\e),b(\e))$.
Multiplying the first equation
in \eqref{eq:NS} and then integrating it over $\mathbb{R}^2_+$, we obtain
\begin{eqnarray*}
&&\int_{\mathbb{R}^2_+} \Big(m^\e\varphi_t{}+{}\frac{(m^\e)^2}{\rho^\e}\varphi_r
{}+{}p_\delta(\rho^\e)\big(\varphi_r{}+{}\frac{n-1}{r}\varphi\big){}+{}\e m^\e\varphi_{rr}\Big)\,r^{n-1}drdt{}\\
&& +{}\int_{\mathbb{R}_+} m^\e_0(r)\varphi(0,r)\,r^{n-1}dr{}={}0.
\end{eqnarray*}

Passing to the limit, we find
\begin{eqnarray*}
\label{weak_form_eq_2}
\int_{\mathbb{R}^2_+} \Big(m\varphi_t{}+{}\frac{m^2}{\rho}\varphi_r{}+{}p(\rho)\big(\varphi_r{}+{}\frac{n-1}{r}\varphi\big)\Big)\,r^{n-1}drdt
{}+{}\int_{\mathbb{R}_+} m_0(r)\varphi(0,r)\,r^{n-1}dr{}={}0.
\end{eqnarray*}

Note that the term containing $\delta \rho^2$ converges to zero by Lemma \ref{lemma 3.2}
since $\delta=\delta(\e)\to 0$ as $\e\to 0$.

This equation can be extended for all smooth compactly supported function $\varphi(t,r)$
on $[0,\infty)\times[0,\infty)$ with $\varphi(t,0){}={}\varphi_r(t,0){}={}0$,
since $(\frac{m^2}{\rho}+\rho^\gamma)(t,r)r^{n-1}\in L^1_{loc}([0,\infty)\times [0,\infty))$.
\end{subsection}
\end{section}

\vspace{.25in}
\noindent
{\bf Acknowledgements:} The research of
Gui-Qiang G. Chen was supported in part by
the UK EPSRC Science and Innovation
Award to the Oxford Centre for Nonlinear PDE (EP/E035027/1),
the UK EPSRC Award to the EPSRC Centre for Doctoral Training
in PDEs (EP/L015811/1),
the NSFC under a joint project Grant 10728101, and
the Royal Society--Wolfson Research Merit Award (UK).
The research of Mikhail Perepelitsa was
supported in part by the NSF Grant DMS-1108048.
The authors would like to thank the Isaac Newton Institute for Mathematical Sciences,
Cambridge, for support and hospitality during the 2014 Programme on
{\it Free Boundary Problems and Related Topics} where work on this paper was undertaken.

\bigskip

\end{document}